\documentclass[a4paper]{amsart}

\usepackage[utf8]{inputenc}
\usepackage[T1]{fontenc}
\usepackage{lmodern, enumerate}
\usepackage{amssymb,amsxtra,amscd,amsmath}
\usepackage{amsthm}
\usepackage{amsfonts}
\usepackage{tikz-cd}
\usepackage{bbm}
\usepackage[all]{xy}
\usepackage{xcolor}
\usepackage{nicefrac,mathtools}
\usepackage[inline]{enumitem}
\usepackage{microtype}
\usepackage{comment}
\usepackage[
  pdftitle={Essential groupoid amenability and nuclearity of groupoid C*-algebras},
  pdfauthor={Alcides Buss, Diego Mart\'{i}nez},
  pdfsubject={Mathematics},
  colorlinks=true, linkcolor=blue, linkbordercolor=blue, citecolor=red, citebordercolor=red, urlcolor=blue, linktocpage=true
]{hyperref}
\usepackage[capitalise, nameinlink, noabbrev, nosort]{cleveref} 
\usepackage[lite]{amsrefs}

\newtheorem{alphthm}{Theorem}			

\newtheorem{alphcor}[alphthm]{Corollary}           
\newtheorem{alphprop}[alphthm]{Proposition}

\setlist[enumerate]{font=\normalfont}
\crefname{enumi}{}{} 
\crefname{enumi}{}{} 

\numberwithin{equation}{section}
\theoremstyle{plain}
\newtheorem{theorem}[equation]{Theorem}
\newtheorem{lemma}[equation]{Lemma}
\newtheorem{proposition}[equation]{Proposition}
\newtheorem{corollary}[equation]{Corollary}
\newtheorem{claim}[equation]{Claim}
\theoremstyle{definition}
\newtheorem{definition}[equation]{Definition}
\theoremstyle{remark}
\newtheorem{remark}[equation]{Remark}
\newtheorem{notation}[equation]{Notation}
\newtheorem{example}[equation]{Example}

\crefname{theorem}{Theorem}{Theorems}
\crefname{proposition}{Proposition}{Propositions}
\crefname{lemma}{Lemma}{Lemmas}
\crefname{claim}{Claim}{Claims}
\crefname{remark}{Remark}{Remarks}
\crefname{corollary}{Corollary}{Corollaries}
\crefname{question}{Question}{Questions}
\crefname{conjecture}{Conjecture}{Conjectures}
\crefname{fact}{Fact}{Facts}
\crefname{claim}{Claim}{Claims}
\crefname{case}{Case}{Cases}


\DeclareMathOperator{\supp}{\mathrm{supp}}
\DeclareMathOperator{\id}{\mathrm{id}}

\DeclareMathOperator{\ess}{ess}


\renewcommand*{\ess}{\mathrm{ess}}
\newcommand{\sing}{\mathrm{sing}}


\newcommand*{\Star}{\(^*\)\nobreakdash-}

\newcommand*{\Z}{\mathbb Z}
\newcommand*{\N}{\mathbb N}
\newcommand*{\C}{\mathbb C}

\renewcommand*{\L}{\mathcal L}
\renewcommand*{\H}{\mathcal H}

\newcommand*{\Hilb}{\mathcal H} 

\newcommand*{\cont}{C}
\newcommand*{\contz}{\cont_0}
\newcommand*{\contc}{\cont_c}
\newcommand*{\contb}{\cont_b}
\newcommand*{\borelb}{B_b}

\newcommand*{\meager}{M}

\newcommand*{\Mat}{\mathbb M}
\newcommand*{\M}{\mathcal M} 

\newcommand*{\NN}{\mathcal N}

\newcommand*{\Id}{\textup{id}}

\newcommand*{\CCC}{\mathbb{C}}

%



\newcommand*{\braket}[2]{\langle#1\!\mid\!#2\rangle}

\newcommand*{\sbe}{\subseteq} 

\newcommand*{\cstar}{\texorpdfstring{$C^*$\nobreakdash-\hspace{0pt}}{*-}}
\newcommand*{\into}{\hookrightarrow}
\newcommand*{\onto}{\twoheadrightarrow}
\newcommand*{\red}{\text{red}}

\renewcommand*{\max}{\mathrm{max}}



%

\newcommand{\B}{\mathbb B}

\newcommand*{\s}{\mathrm d} 
\newcommand*{\rg}{\mathrm r}



\newcommand*{\norm}[1]{\left\|#1\right\|}
\newcommand*{\abs}[1]{\left|#1\right|}

\newcommand*{\auxalg}[1]{\mathfrak{A}_c\left(#1\right)}
\newcommand*{\auxalgK}[1]{\mathfrak{A}_K\left(#1\right)}
\newcommand*{\auxalgc}[1]{\mathfrak{A}^\infty_c\left(#1\right)}

\newcommand*{\algalg}[1]{\mathfrak{C}_c\left(#1\right)}

\newcommand*{\essalgalg}[1]{\mathfrak{C}_c^{\rm ess}\left(#1\right)}
\newcommand*{\redalg}[1]{C_{\rm red}^*\left(#1\right)} 
\newcommand*{\fullalg}[1]{C_{\rm max}^*\left(#1\right)} 


\newcommand*{\borelalg}[1]{B_{c}\left(#1\right)}
\newcommand*{\borelessalg}[1]{B^{\rm ess}_{c}\left(#1\right)}
\newcommand*{\fullborel}[1]{B_{\rm max}\left(#1\right)}
\newcommand*{\redborel}[1]{B_{\rm red}\left(#1\right)}
\newcommand*{\essmaxborel}[1]{B_{\rm ess,max}\left(#1\right)}
\newcommand*{\essborel}[1]{B_{\rm ess}\left(#1\right)}

\newcommand*{\essalg}[1]{C_{\rm ess}^*\left(#1\right)} 
\newcommand*{\essmaxalg}[1]{C_{\rm ess,max}^*\left(#1\right)} 






\begin{document}

\title[Essential groupoid amenability and nuclearity]{Essential groupoid amenability and nuclearity of groupoid C*-algebras}

\author[Alcides Buss]{Alcides Buss $^{1}$}
\address{Departamento de Matem\'atica, Universidade Federal de Santa Catarina, 88.040-900 Florian\'opolis-SC, Brazil}
\email{alcides.buss@ufsc.br}

\author[Diego Mart\'{i}nez]{Diego Mart\'{i}nez $^{2}$}
\address{Department of Mathematics, KU Leuven, Celestijnenlaan 200B, 3001 Leuven, Belgium.}
\email{diego.martinez@kuleuven.be}

\begin{abstract}
We give an alternative construction of the essential \cstar algebra of an \'etale groupoid, along with an ``amenability'' notion for such groupoids that is implied by the nuclearity of this essential \cstar algebra.
In order to do this we first introduce a maximal version of the essential \cstar algebra, and prove that every function with dense co-support can only be supported on the set of ``dangerous'' arrows.
We then introduce an essential amenability condition for a groupoid, which is (strictly) weaker than its (topological) amenability.
As an application, we describe the Bruce-Li algebras arising from algebraic actions of cancellative semigroups as exotic essential \cstar algebras.
\end{abstract}

\subjclass[2020]{20L05,46L52,46L05}

\keywords{Groupoid; Amenability; Nuclearity; Essential C*-algebra}

\thanks{{$^{1}$} Partially supported by CNPq, CAPES -- Brazil \& Humboldt Fundation -- BRA/1146429 and Germany’s Excellence Strategy -- EXC 2044 -- 390685587,  Mathematics Münster -- Dynamics -- Geometry -- Structure, and by Fapesc - SC - Brazil.}

\thanks{{$^{2}$} Supported by project G085020N funded by the Research Foundation Flanders (FWO)}

\maketitle
 \tableofcontents

\section{Introduction} \label{sec:intro}

\cstar algebras and, in particular, the \emph{nuclear} ones, have seen quite an extraordinary amount of work put into them in the last few decades, see \cites{Elliott2015OnTC,tikuisis-white-winter-2017,Renault2008CartanSI,kumjian-diagonals-1986,white_cartan_2018,winter_nuclear_2010,li-classifiable-cartans-2020,brix-gonzalez-hume-li-2025} for just a taste of the breadth and depth of these studies.
Recall that a \cstar algebra \(A\) is \emph{nuclear} when the identity map \(\id \colon A \to A\) can be approximated in the point-norm topology by maps that factor through matrix algebras, \emph{i.e.}\ there are \emph{contractive} and \emph{completely positive} maps
\[
    \varphi_i \colon A \to \Mat_{k\left(i\right)}(\C) \;\; \text{ and } \;\; \psi_i \colon \Mat_{k\left(i\right)}(\C) \to A
\]
such that \(\psi_i(\varphi_i(a)) \to a\) for all \(a \in A\) (the convergence is in norm).

This concept has strong ties to amenability in the setting of groups and groupoids. In the case of discrete groups, nuclearity of the reduced group \cstar{}algebra $\redalg \Gamma$ is known to be equivalent to the amenability of $\Gamma$ (see \cite{Brown-Ozawa:Approximations}*{Theorem 2.6.8}). In this paper, we extend this relationship to the realm of étale groupoids by introducing a new notion, \emph{essential amenability}, which allows us to characterize the nuclearity of an important class of \cstar{}algebras associated with \emph{non-Hausdorff groupoids.}

A \emph{groupoid} is a small category \(G\) with inverses. For instance, any group \(\Gamma\) is automatically a groupoid with exactly one object (the unit of the group). Likewise, any topological space \(X\) is also a groupoid when seen as having only identity arrows (\emph{i.e.}\ objects).
In this way, groupoids include both groups and their actions on spaces, and thus are particularly interesting objects when it comes to constructing \cstar algebras out of them.
We say that \(G\) is \emph{\'etale} if it is equipped with a topology that turns all operations into continuous maps and, in addition, makes the source and range maps into local homeomorphisms (see \cite{SimsNotes2020} and/or \cref{sec:pre:grpds} below). Every groupoid here considered will be \'etale, but \emph{not} necessarily Hausdorff.
However, its unit space $X \coloneqq G^{(0)}$ will here be always considered to be locally compact and Hausdorff, so that $G$ is also locally compact and locally Hausdorff. This, seemingly subtle, distinction is, in fact, quite crucial, at least when pondering questions related to the \cstar algebras constructed out of \(G\).

Just as is the case for discrete groups or tensor products, one may construct quite a zoo of \cstar algebras out of an \'etale groupoid. 
Nevertheless, there are two special \cstar algebras that stand out.
The \emph{maximal} \cstar algebra of \(G\), denoted by \(\fullalg{G}\), is some universal \cstar algebra of \(G\) (see \cref{sec:pre:algs} for details). Roughly speaking, $\fullalg G$ is the `largest' \cstar{}algebra one can assign to $G$, and it encodes its representation theory.
More precisely, $\fullalg G$ is the enveloping \cstar{}algebra of the convolution algebra $\algalg G$, consisting of linear combinations of functions $G\to \C$ that are compactly supported and continuous on a Hausdorff open subset of $G$. These functions are not necessarily continuous on $G$, which is why we use the different notation $\algalg G$ instead of the more standard $\contc(G)$.

All other \cstar{}algebras assigned to $G$ will be quotients of $\fullalg G$. For instance, the \emph{reduced} \cstar algebra is the quotient of it that one constructs as the image of the \emph{left regular representation}, that we view as a quotient map
\begin{equation} \label{eq:intro:left-reg-rep}
    \Lambda_P \colon \fullalg{G} \twoheadrightarrow \redalg{G} \hookrightarrow \B\left(\oplus_{x \in X} \ell^2\left(Gx\right)\right)
\end{equation}
(cf.\ \cref{sec:pre:algs}).
However, if the groupoid \(G\) is \emph{not} Hausdorff then the story might not end up here.
Exel and Pitts introduced in \cite{ExelPitts} the \emph{essential} \cstar algebra of an \'etale (topologically free) groupoid, which they denoted by \(\essalg{G}\).
Kwa\'{s}niewski and Meyer~\cite{KwasniewskiMeyer-essential-cross-2021} then expanded the definition of the essential \cstar algebra in order to accommodate for non-topologically free groupoids as well.
In this text we follow their point of view.

The \cstar algebra \(\essalg{G}\) is, then, a quotient of \(\redalg{G}\), but it has the ``right ideal structure''. Indeed, even for topologically free groupoids, \(\redalg{G}\) might contain non-zero ideals \(I \subseteq \redalg{G}\) that trivially intersect the sub-algebra \(\contz(X)\) of continuous functions on the unit space \(X \subseteq G\). 
Of course, since nuclearity passes to quotients, it follows that if one wishes to guarantee the nuclearity of \(\essalg{G}\) it suffices to guarantee the nuclearity of \(\redalg{G}\) (see \cite{BussMartinez:Approx-Prop}*{Theorem 6.10} for such a sufficient condition, or \cref{thm-intro} below). The converse, however, fails. In this paper we characterize when \(\essalg{G}\) is nuclear in terms of a certain \emph{essential amenability} of \(G\) (see \cref{def:ess-ame}).
In order to do this we first introduce a \emph{maximal} analogue of \(\essalg{G}\), which we denote by \(\essmaxalg{G}\), and define it as a \cstar algebra enjoying a certain universal property (cf.\ \cref{def:max ess alg}).
It thus follows that \(\essmaxalg{G}\) is a quotient of \(\fullalg{G}\) and quotients onto \(\essalg{G}\). Nonetheless, it does \emph{not} quotient onto (nor is it quotiented onto by) \(\redalg{G}\), in general. 
Under the mild assumption that $G$ is covered by countably many open bisections (\emph{e.g.}\ if $G$ is second countable or $\sigma$-compact), we prove that the regular representation factors through a faithful representation 
$$\Lambda_{E_\mathcal{L}}\colon \essalg G\into \B\left(\oplus_{x \in X \setminus D} \ell^2\left(Gx\right)\right),$$
where $D\subseteq G$ denotes the set of \emph{dangerous} arrows, see \cref{def:dangerous arrow}. These are precisely the arrows that witness the non-Hausdorff nature of $G$. We may also view $\Lambda_{E_\mathcal{L}}$ as a quotient map $\essmaxalg G\onto \essalg G$.
It follows that we now have four \cstar algebras of interest, which fit into the following commutative diagram (cf.\ \cref{cor:diagram}, and compare with \eqref{eq:intro:left-reg-rep}):\footnote{\; The, admittedly rather confusing, notation for the \emph{left regular representations} \(\Lambda^{\rm max}_{E_\mathcal{L},P}\), \(\Lambda_P\), \(\Lambda_{E_\L}\) and \(\Lambda^{\rm red}_{E_\mathcal{L},P}\) in the diagram does follow some logic. Moreover, the set \(D \subseteq G\) denotes the set of \emph{dangerous} arrows, see the last paragraph of the introduction and/or \cref{def:dangerous arrow}.}
\begin{center}
\begin{tikzcd}[scale=50em]
\fullalg{G} \arrow[two heads]{d}{\Lambda^{\rm max}_{E_\mathcal{L},P}} \arrow[two heads]{r}{\Lambda_P} & \redalg{G} \arrow[two heads]{d}{\Lambda^{\rm red}_{E_\mathcal{L},P}} \arrow[hook]{r}{} & \B\left(\oplus_{x \in X} \ell^2\left(Gx\right)\right) \\
\essmaxalg{G} \arrow[two heads]{r}{\Lambda_{E_\mathcal{L}}} & \essalg{G} \arrow[hook]{r}{} & \B\left(\oplus_{x \in X \setminus D} \ell^2\left(Gx\right)\right).
\end{tikzcd}
\end{center}

Heuristically, we see the \cstar algebras on the ``middle-column'' as the ``reduced'' half of the diagram, since they are naturally constructed as concrete \cstar subalgebras acting on some Hilbert spaces. Meanwhile, the \cstar algebras on the left are the ``maximal'' ones. Likewise, the ``top-row'' is the ``weak'' part; whereas the bottom-row is the ``essential'' part of the diagram.
The main objective of the paper is to study when the algebras \(\redalg{G}\) and \(\essalg{G}\) are nuclear, and the relation between these phenomena and the maps \(\Lambda_P\) and \(\Lambda_{E_\L}\) being isomorphisms.
\begin{alphthm}[cf.\ \cref{thm:wk-cont-nuc,thm:wk-cont-nuc-ess}] \label{thm-intro}
    Let \(G\) be an \'etale groupoid with locally compact Hausdorff unit space. Suppose it can be covered by countably many open bisections.
    \begin{enumerate}[label=(\roman*)]
        \item \(\redalg{G}\) is nuclear if and only if \(G\) is amenable. In such case, the regular representation \(\Lambda_P \colon \fullalg{G} \to \redalg{G}\) is an isomorphism.
        \item If \(\essalg{G}\) is nuclear then \(G\) is essentially amenable.
    \end{enumerate}    
\end{alphthm}

There are two new key ideas that allow us to prove \cref{thm-intro}.
The first is the realization that elements \(a \in \algalg{G}\), even though \emph{not} continuous as functions on \(G\), can be understood as continuous functions on the arrows of \(G\) that do not witness the non-Hausdorffness of \(G\). In turn, this follows from the following proposition.

\begin{alphprop}[cf.\ \cref{lemma:support is in d}] \label{prop-intro}
    Let \(G\) be an \'etale groupoid with locally compact Hausdorff unit space. Suppose that \(C \subseteq G\) is dense and \(a \in \algalg{G}\) is such that \(a(c) = 0\) for all \(c \in C\). If \(a(g) \neq 0\) then \(g \in D\).
\end{alphprop}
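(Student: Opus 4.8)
The plan is to prove the contrapositive: if \(g \in G\) is \emph{not} dangerous, then \(a(g) = 0\). The guiding idea is that, although a function in \(\algalg{G}\) is in general discontinuous (precisely because \(G\) fails to be Hausdorff), all of its discontinuities are confined to the dangerous arrows. Once we know \(a\) is continuous at a given non-dangerous \(g\), the hypothesis that \(a\) vanishes on the dense set \(C\) will force \(a(g) = 0\) by a routine neighborhood argument. The only property of dangerous arrows I will use is the one recorded in \cref{def:dangerous arrow}: the existence of a \emph{single} net in \(G\) converging simultaneously to two distinct arrows certifies that both are dangerous (this is exactly the way non-Hausdorffness is witnessed, since \(G^{(0)}\) being Hausdorff forces the two limits to be parallel arrows).

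First I would unwind the definition of \(\algalg{G}\) and write \(a = \sum_{k=1}^{n} f_k\), where each \(f_k\) is the extension by zero of a function in \(\contc(U_k)\) for some open Hausdorff set \(U_k \subseteq G\); put \(K_k \defeq \supp f_k \subseteq U_k\), which is compact. Fixing a non-dangerous \(g\), the heart of the proof is the claim that \(a\) is continuous at \(g\), which I would check using nets. Let \((h_i)\) be any net in \(G\) with \(h_i \to g\). I split the indices according to whether \(g \in U_k\). If \(g \in U_k\), then since \(U_k\) is open we have \(h_i \in U_k\) eventually, and continuity of \(f_k\) on \(U_k\) yields \(f_k(h_i) \to f_k(g)\). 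If \(g \notin U_k\), then \(f_k\) contributes \(0\) to \(a(g)\), and I must show \(f_k(h_i) \to 0\). Suppose not; then along a subnet \(f_k(h_{i_j})\) is bounded away from \(0\), so \(h_{i_j} \in K_k\). By compactness of \(K_k\) a further subnet converges to some \(k \in K_k \subseteq U_k\); this sub-subnet then converges simultaneously to \(g \notin U_k\) and to \(k \in U_k\), and since \(g \neq k\) this exhibits \(g\) as a dangerous arrow, contradicting our assumption. Hence \(f_k(h_i) \to 0\) for every \(k\) with \(g \notin U_k\), and summing over \(k\) gives \(a(h_i) \to \sum_{k : g \in U_k} f_k(g) = a(g)\). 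As the net was arbitrary, \(a\) is continuous at \(g\).

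To finish, I would use continuity at \(g\) in its neighborhood form (valid in any topological space when phrased via nets): there is an open \(W \ni g\) with \(|a(h) - a(g)| < |a(g)|\) for all \(h \in W\), so that \(a\) is nowhere zero on \(W\). Since \(C\) is dense, \(C \cap W \neq \emptyset\); choosing \(c \in C \cap W\) gives \(a(c) \neq 0\), contradicting \(a(c) = 0\) for all \(c \in C\). Therefore \(a(g) = 0\) whenever \(g\) is not dangerous, which is the desired statement. The main obstacle is the second case of the net argument, i.e.\ controlling the terms \(f_k\) with \(g \notin U_k\): this is exactly where the non-Hausdorff phenomenon enters, because the compact support \(K_k\) need \emph{not} be closed in \(G\), and points of \(\overline{K_k} \setminus U_k\) are precisely the dangerous arrows produced by the compactness argument. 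Everything else is bookkeeping, and notably no countability hypothesis on \(G\) is needed since the argument runs entirely through nets.
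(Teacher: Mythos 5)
Your proof is correct and takes essentially the same route as the paper: the heart of both arguments is that every \(a \in \algalg{G}\) is continuous at each non-dangerous arrow (the paper's \cref{pro:dangerous-points-discontinuity} and \cref{cor:restriction-map}), established by the very same decomposition into pieces compactly supported on open Hausdorff sets and the same compactness-of-support/subnet argument producing two distinct limits, after which density of \(C\) finishes the job. The only cosmetic difference is that the paper packages this as the implication ``discontinuity point \(\Rightarrow\) dangerous'' within a list of equivalent characterizations and then applies it directly, while you argue the contrapositive (non-dangerous \(\Rightarrow\) continuous \(\Rightarrow\) vanishing) -- the mathematical content is identical.
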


The above can be understood as a generalization of the well-known fact that a continuous function that is \(0\) on a dense subset must be \(0\) everywhere.
The usefulness of \cref{prop-intro} for us is the fact that it allows to represent \(\essalg{G}\) in a relatively straightforward way (cf.\ \cref{prop:rep red and ess calgs:ess}).

The second key idea in the proof of \cref{thm-intro} is the construction of certain Borel \emph{Herz-Schur multipliers} (see \cref{prop:schur multiplier} below). In particular, the non-Hausdorffness of \(G\) necessitates considering \emph{Borel} functions on \(G\), as the image of these multipliers is always \emph{Borel} (as opposed to lying in \(\algalg{G}\)).\footnote{\, This remains true even if one starts with functions in \(\algalg G\), which was the source of an error in a previous version of this paper; see \cref{ex:trivial-action-ame}.}
This distinction is crucial, as the canonical functions on \(G\) arising from the nuclearity of \(\redalg{G}\) (or \(\essalg{G}\)) are, in general, \emph{not} elements of \(\algalg G\) (see \cref{lemma:taking norm is continuous}). Consequently, to establish \cref{thm-intro}, one must work with Borel functions, which naturally leads to defining a \emph{Borel} (\emph{essential}) \emph{amenability} (cf.\ \cref{def:ame,def:ess-ame}). 

Remarkably, these Borel notions are equivalent to their topological counterparts (cf.\ \cref{thm:wk-cont-nuc,thm:wk-cont-nuc-ess}). In fact, the topological nature of Borel amenability was already observed by Renault in the main result of \cite{renault-2013}. However, the concept of a \emph{topological approximate invariant mean} introduced in \cite{renault-2013} requires refinement; see \cref{rem:mistakes}.

As direct consequences of \cref{thm-intro}, we have the following results for classical crossed products by group actions, which might prove of independent interest. 

\begin{alphcor}[cf.\ \cref{thm:wk-cont-nuc}] \label{thm-intro-borel}
    Let \(\alpha \colon \Gamma \curvearrowright X\) be a discrete group acting on a compact Hausdorff space. The following assertions are equivalent.
    \begin{enumerate}[label=(\roman*)]
        \item \(C(X) \rtimes_{\rm red} \Gamma\) is nuclear.
        \item \(\borelb(X) \rtimes_{\rm red} \Gamma\) is nuclear, where \(\borelb(X)\) denotes the \cstar{}algebra of bounded Borel functions on \(X\).
        \item \(\alpha\) is amenable.
        \item \(\alpha\) is Borel amenable, that is, there are finitely supported functions \(\xi_i \colon \Gamma \to \borelb(X)_+\) such that \(\sup_{x \in X} \sum_{\gamma \in \Gamma} \xi_i(\gamma)(x)^2 \leq 1\) and
        \[
            \sup_{\gamma \in F, \, x \in X} \abs{\sum_{\rho \in \Gamma} \overline{\xi_i\left(\rho^{-1}\right)\left(x\right)} \, \xi_i\left(\rho^{-1} \gamma\right)\left(x\right) - 1} \to 0
        \]
        for all finite sets \(F \subseteq \Gamma\).\footnote{\, The difference between \emph{Borel} amenability and the more common \emph{topological} amenability (cf.\ \cite{brown_c*-algebras_2008}*{Definition 4.3.5}) is the freedom of the functions \(\xi_i\) to be Borel, as opposed to continuous. The required invariance conditions are the same.}
    \end{enumerate}
    In such case, the canonical quotient maps \(C(X) \rtimes_{\rm max} \Gamma \twoheadrightarrow C(X) \rtimes_{\rm red} \Gamma\) and \(\borelb(X) \rtimes_{\rm max} \Gamma \twoheadrightarrow \borelb(X) \rtimes_{\rm red} \Gamma\) are injective.
\end{alphcor}

As an application of our methods, we also derive a condition for coincidence of essential and non-essential \cstar{}algebras of an amenable étale groupoid.

\begin{alphcor}[cf.\ \cref{cor:simple-red-eq-ess}] \label{thm-intro-red-eq-ess}
    Let \(G\) be amenable. Suppose that \(\{a \in \auxalgc{G} \mid \supp(a) \text{ is meager}\} = 0\).
    Then \(\fullalg{G} \cong \essalg{G}\) via the canonical quotient map. In particular, \(\fullalg{G}\) is simple if and only if \(G\) is minimal and topologically free.
\end{alphcor}

\

The paper is organized as follows. In \cref{sec:pre} we recap some (rather brief) preliminaries needed for the rest of the paper. \cref{sec:ess-reps-algs} is dedicated to showing \cref{prop-intro} and its main consequences, particularly a rather straightforward description of a faithful representation of \(\essalg{G}\), cf.\ \cref{prop:rep red and ess calgs:ess}.
\cref{sec:aux-alg} studies some extra Borel \cstar{}algebras needed for the study of amenability, which is carried out in \cref{sec:ess-ap}.
Moreover, in \cref{sec:wk-cont} we state and prove \cref{thm-intro}.
Lastly, \cref{sec:simplicity} proves \cref{thm-intro-red-eq-ess}, whereas \cref{sec:bruce-li} studies how the machinery here developed applies to the \cstar algebras studied in \cite{bruce-li:2024:alg-actions}.

\

\textbf{Conventions:} \(G\) denotes an \'etale groupoid with locally compact Hausdorff unit space \(X\coloneqq G^{(0)} \subseteq G\). The domain and range maps will be denoted by \(\s,\rg\colon G\to X\).
Moreover, $\B(\Hilb{})$ denotes the set of linear bounded operators on the Hilbert space $\Hilb{}$. When we write that $K$ is compact we mean that $K$ is a possibly non-Hausdorff compact topological space, that is, every open cover of $K$ admits a finite subcover. 
\vskip 0,5pc
\textbf{Acknowledgements.} We are grateful to Chris Bruce for insightful discussions related to \cite{bruce-li:2024:alg-actions}. We also thank Sergey Neshveyev and Bartosz Kwa\'sniewski for drawing our attention to \cites{neshveyevetal2023,bkmk-2024-twited-grpds}, respectively, and for their helpful comments.
We are further indebted to Ralf Meyer for valuable discussions concerning essential groupoid \(\mathrm{C}^*\)-algebras, and to Rufus Willett for pointing out an example involving crossed products (cf.\ \cref{exa:crossed-product-l-infty}) that demonstrates the limitations of extending \cref{thm-intro-borel}. 
Finally, we express our deepest gratitude to Xin Li for identifying errors in an earlier version of this paper. Interestingly, these turned out to be closely related to mistakes we later discovered in \cite{renault-2013}.
\vskip 0,5pc
\textbf{Note.} During the preparation of different versions of this paper, the independent work \cite{brix-gonzalez-hume-li-2025} was completed. While there is some overlap in results -- particularly concerning \cref{thm:wk-cont-nuc} -- the approaches and the remaining conclusions are substantially different.

\section{Preliminaries}\label{sec:pre}
In this section we quickly cover all the necessary background for the paper. We are, thus, particularly interested in groupoids (cf.\ \cref{sec:pre:grpds}) and in their \cstar algebras (cf.\ \cref{sec:pre:algs}).

\subsection{\'Etale groupoids} \label{sec:pre:grpds}
Groupoid literature is vast, but we refer the reader to \cite{SimsNotes2020} for a (now rather canonical) resource. Recall that a \emph{groupoid} is a small category with inverses, that is, \(G\) is a set equipped with subsets \(G^{(0)} \subseteq G\) and \(G^{(2)} \subseteq G \times G\) of \emph{units} and \emph{composable pairs} respectively, along with an associative \emph{multiplication map}
\[
    \cdot \colon G^{(2)} \to G, \;\; \left(g, h\right) \mapsto gh
\]
and a bijective \emph{inverse map}
\[
    ^{-1} \colon G \to G, \;\; g \mapsto g^{-1}
\]
such that
\begin{enumerate}[label=(\roman*)]
    \item \(gg^{-1}g = g\) and \((gh)^{-1} = h^{-1} g^{-1}\) for all \((g,h) \in G^{(2)}\); and
    \item \((g,h) \in G^{(2)}\) if and only if \(g^{-1}g = hh^{-1}\).
\end{enumerate}
In particular it follows that \((x,x) \in G^{(2)}\) and \(xx = x\) for every unit \(x \in G^{(0)}\). For the sake of readability, we will usually denote the set of units by \(X \coloneqq G^{(0)} \subseteq G\).
The \emph{domain} and \emph{range} maps \(\s, \rg \colon G \to X\) are defined to be
\[
    \s\left(g\right) \coloneqq g^{-1} g \; \text{ and } \; \rg\left(g\right) \coloneqq gg^{-1}
\]
respectively.
We say that \(G\) is a \emph{topological} groupoid if it is endowed with a topology such that the multiplication, inverse, domain and range maps are all continuous. We say that a subset \(s \subseteq G\) is a \emph{bisection} if both the domain and range maps restrict to homeomorphisms from \(s\) onto certain subsets of \(X\). Likewise, we say that \(G\) is \'etale if its topology can be generated by open bisections.
Morally speaking, \'etale groupoids are the equivalent of discrete groups, or, rather, discrete group \emph{actions} on topological spaces \(X\).
Lastly, we say that \(G\) is \emph{locally compact} if it is locally compact as a topological space.
We do \emph{not} require \(G\) to be globally a Hausdorff space, but only locally so. Indeed, the unit space \(X\) will usually be a locally compact Hausdorff space. Thus, any open bisection \(s \subseteq G\) is homeomorphic to some open subset of \(X\) (actually, the subset is \(s^*s\), see \eqref{eq:convention about AB}).
Throughout the paper, given \(A, B \subseteq G\) we will let
\begin{equation} \label{eq:convention about AB}
    A B \coloneqq \left\{ab \mid a \in A, b \in B \text{ and } (a, b) \in G^{(2)}\right\} \; \text{ and } \; A^* \coloneqq \left\{a^{-1} \mid a \in A\right\}.
\end{equation}
Likewise, if \(A = \{a\}\) is a singleton then \(a B \coloneqq \{a\} B\) and, similarly, \(B a \coloneqq B \{a\}\). In particular, \(Gx\) denotes all the arrows in \(G\) starting at the unit \(x \in X \subseteq G\), and \(xGx\) is the (necessarily discrete) isotropy group at \(x\).\footnote{\, In groupoid literature it is usually the case that \(Gx\) and \(xG\) are denoted by \(G_x\) and \(G^x\) respectively. We find the former notation to be both more coherent and readable.}
Since \(G\) will be \'etale throughout, it follows that the sets \(Gx, xG\) and \(xGx\) are all discrete (when equipped with the subspace topology).
It also follows from the definitions that if \(s, t \subseteq G\) are open bisections then so are \(s^*\) and \(st\).
In particular, it follows that the set of open bisections of \(G\) forms an \emph{inverse semigroup}. This duality has been used extensively, see \cites{Renault2008CartanSI,SimsNotes2020,Buss-Exel-Meyer:Reduced,BussMartinez:Approx-Prop,ExelPitts,Exel:Inverse_combinatorial}, and we will use it here as well (see \cref{notation:cont functions as sums}).
We end the groupoid preliminaries with the following, which we only state for future use. The proof is well known, and trivial to check.
\begin{lemma} \label{lemma:compact set is covered by fin many bisections}
    Let \(G\) be an \'etale groupoid. If \(K\sbe G\) is compact then there are finitely many bisections \(s_1, \dots, s_p \subseteq G\) such that \(K \subseteq s_1 \cup \dots \cup s_p\).
\end{lemma}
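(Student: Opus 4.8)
The plan is to exploit directly the defining property of an \'etale groupoid, namely that its topology is generated by open bisections, together with the definition of compactness recalled in the conventions. Since \(K\) is compact, it will suffice to produce an open cover of \(K\) by bisections and then extract a finite subcover.

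First I would note that, because \(G\) is \'etale, the open bisections form a basis for its topology: indeed, any subset of a bisection is again a bisection (the domain and range maps still restrict to homeomorphisms onto subsets of \(X\)), so finite intersections of open bisections are open bisections, and these generate the topology. Consequently every arrow \(g \in K\) is contained in some open bisection \(s_g \subseteq G\), and the family \(\{s_g \mid g \in K\}\) is an open cover of \(K\) by open bisections.

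Next, invoking the compactness of \(K\) — understood, as in the conventions, in the non-Hausdorff sense that every open cover admits a finite subcover — I would extract finitely many arrows \(g_1, \dots, g_p \in K\) with \(K \subseteq s_{g_1} \cup \dots \cup s_{g_p}\). Setting \(s_i \coloneqq s_{g_i}\) yields the desired bisections.

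There is essentially no obstacle here; the statement is purely point-set topological. The only subtlety worth flagging is that compactness must be read in the non-Hausdorff sense (every open cover has a finite subcover), which is precisely what the stated convention guarantees. Note also that the resulting bisections need not be pairwise disjoint, but the \textbf{lemma} only asks for a finite covering, so no disjointification is required.
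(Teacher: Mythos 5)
Your proof is correct and is precisely the standard argument that the paper omits as ``well known, and trivial to check'': open bisections form a basis for the topology of an \'etale groupoid (since any subset of a bisection is a bisection), so one covers \(K\) by open bisections and extracts a finite subcover using compactness in the stated (non-Hausdorff) sense. Nothing is missing.
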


\subsection{Maximal, reduced and essential groupoid algebras} \label{sec:pre:algs}
In this subsection we recall the construction of several different canonical \cstar algebras associated to an \'etale groupoid \(G\). Standard resources for this include \cites{SimsNotes2020,KwasniewskiMeyer-essential-cross-2021,ExelPitts}, among many others.
\begin{definition} \label{def:grpd-alg}
    Let \(G\) be an \'etale groupoid with locally compact Hausdorff unit space \(X \subseteq G\).
    \begin{enumerate}[label=(\roman*)]
        \item \label{def:grpd-alg:alg} \(\algalg{G}\) is the vector space of functions \(a \colon G \to \CCC\) that are sums \(a = a_1 + \dots + a_k\), where each \(a_i \colon G \to \CCC\) is a function that is compactly supported and continuous on an open bisection (but not necessarily continuous on $G$). We view $\algalg{G}$ as a \Star{}algebra with the usual operations of convolution and involution:
        \[(a*b)(g)\coloneqq\sum_{hk=g}a(h)b(k),\quad a^*(g)\coloneqq\overline{a(g^{-1})}.\]
        \item \label{def:grpd-alg:p} \(P \colon \algalg{G} \to \borelb(X)\) is the canonical \emph{weak conditional expectation}, that is, the map given by restriction to the open subspace \(X \subseteq G\), \emph{i.e.}\
            \[
                P \colon \algalg{G} \to \borelb(X), \;\; a \mapsto a|_X.
            \]
            Here $\borelb(X)$ denotes the \cstar algebra of all bounded Borel functions $X\to \C$.
        \item \label{def:grpd-alg:el} \(E_\L \colon \algalg{G} \to \borelb(X)/\meager(X)\) is the canonical \emph{essential conditional expectation}, that is, the map given by \(E_\L \coloneqq \pi_{\rm ess} \circ P\), where \(\pi_{\rm ess} \colon \borelb(X) \to \borelb(X)/\meager(X)\) is the usual quotient map, with $\meager(X)$ denoting the ideal of functions in $\borelb(X)$ vanishing off a meager set.
    \end{enumerate}
\end{definition}

\begin{remark}
    The nomenclature \(\algalg{G}\) in \cref{def:grpd-alg} \emph{could} be confusing. Indeed, the elements \(a \in \algalg{G}\) are \emph{not} necessarily continuous functions (at least in the non-Hausdorff case, cf.\ \cref{lemma:hausdorff}). We are, nevertheless, going to use this notation, since \(\algalg{G}\) is the only reasonable algebra one could define out of an \'etale groupoid that `separates' points in \(G\).
    Moreover, should \(G\) be Hausdorff then \(\algalg{G}\) in \cref{def:grpd-alg} really is the algebra of functions \(a \colon G \to \CCC\) that are globally continuous and compactly supported.
\end{remark}

In the text we will use the following nomenclature, which is not customary but very useful when talking about non-Hausdorff groupoids, as we will be doing.
\begin{notation} \label{notation:cont functions as sums}
    We will denote an arbitrary function \(a \in \algalg{G}\) as
    \[
        a = a_1 v_{s_1} + \dots + a_k v_{s_k},
    \]
    where \((s_i)_{i = 1}^k\) are open bisections of \(G\) and \(a_i\) is a continuous function supported and vanishing at infinity on \(s_is_i^*=\rg(s_i)\sbe X\), \emph{i.e.}\ \(a_i \in \contz(s_is_i^*) \subseteq \contz(X)\). The notation $a_iv_{s_i}$ then just means the composition $a_i\circ r$ when restricted to $s_i$, and is zero off $s_i$. 
    In particular,
    \[
        a\left(g\right) = \sum_{\substack{ i = 1, \dots, k \\ g \in s_i }} a_i\left(gg^{-1}\right)=\sum_{\substack{ i = 1, \dots, k \\ g \in s_i }} a_i\left(\rg(g)\right)
    \]
    for all \(g \in G\).
\end{notation}

The usefulness of \cref{notation:cont functions as sums} derives from the fact that we are considering both the functions \((a_i)_{i = 1}^k\) and the open bisections \((s_i)_{i = 1}^k\) at the same time. 
The following is well known, but also quite relevant in the context of the paper.
\begin{lemma} \label{lemma:hausdorff}
    Let \(G\) be an \'etale groupoid with locally compact Hausdorff unit space \(X \subseteq G\). Consider the following conditions.
    \begin{enumerate}[label=(\roman*)]
        \item \label{lemma:hausdorff:hausdorff} \(G\) is Hausdorff.
        \item \label{lemma:hausdorff:cont} Every \(a \in \algalg{G}\) is continuous.
        \item \label{lemma:hausdorff:p} \(P\) lands in \(\contz(X) \subseteq \borelb(X)\).
        \item \label{lemma:hausdorff:el} \(E_\L\) lands in \(\contz(X) \subseteq \borelb(X)/\meager(X)\).
    \end{enumerate}
    Then \cref{lemma:hausdorff:hausdorff} \(\Leftrightarrow\) \cref{lemma:hausdorff:cont} \(\Leftrightarrow\) \cref{lemma:hausdorff:p} \(\Rightarrow\) \cref{lemma:hausdorff:el}.
\end{lemma}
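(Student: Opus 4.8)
The plan is to prove the cycle of implications \cref{lemma:hausdorff:hausdorff}$\Rightarrow$\cref{lemma:hausdorff:cont}$\Rightarrow$\cref{lemma:hausdorff:p}$\Rightarrow$\cref{lemma:hausdorff:hausdorff}, which establishes the three equivalences simultaneously, and then to dispatch \cref{lemma:hausdorff:p}$\Rightarrow$\cref{lemma:hausdorff:el} separately. For the first implication, write $a = a_1 v_{s_1} + \dots + a_k v_{s_k}$ as in \cref{notation:cont functions as sums}, and fix one summand $a_i v_{s_i}$, which is continuous on the open bisection $s_i$ and compactly supported there, say with support $K_i \subseteq s_i$. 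If $G$ is Hausdorff then $K_i$ is closed in $G$, so $\{s_i,\, G \setminus K_i\}$ is an open cover of $G$ on which $a_i v_{s_i}$ is continuous: it is continuous on $s_i$ by hypothesis and identically zero on $G \setminus K_i$, the two descriptions agreeing (both giving $0$) on the overlap $s_i \setminus K_i$. The pasting lemma then gives continuity of $a_i v_{s_i}$ on all of $G$, and hence of the finite sum $a$.

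For \cref{lemma:hausdorff:cont}$\Rightarrow$\cref{lemma:hausdorff:p}, note that $P(a) = a|_X$ is the restriction of the (now continuous) function $a$ to the open subspace $X \subseteq G$, hence continuous; and since $|P(a)| \leq \sum_{i} |a_i| \in \contz(X)$ pointwise, a continuous function dominated by a function in $\contz(X)$ is itself in $\contz(X)$, because each set $\{|P(a)| \geq \varepsilon\}$ is then contained in the compact set $\{\sum_i |a_i| \geq \varepsilon\}$. The final implication \cref{lemma:hausdorff:p}$\Rightarrow$\cref{lemma:hausdorff:el} is immediate: since $E_\L = \pi_{\rm ess} \circ P$, if $P(a) \in \contz(X)$ then $E_\L(a) = \pi_{\rm ess}(P(a))$ lies in $\pi_{\rm ess}(\contz(X))$, which is exactly the copy of $\contz(X)$ inside $\borelb(X)/\meager(X)$ (the restriction of $\pi_{\rm ess}$ to $\contz(X)$ being injective, as a nonzero continuous function is nonzero on a nonempty open, hence non-meager, set).

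The crux is \cref{lemma:hausdorff:p}$\Rightarrow$\cref{lemma:hausdorff:hausdorff}, which I would prove in contrapositive form: assuming $G$ is not Hausdorff, I will exhibit $a \in \algalg{G}$ for which $P(a)$ fails to be continuous (this simultaneously shows the failure of \cref{lemma:hausdorff:cont}). The key intermediate fact is that, for an étale groupoid with Hausdorff unit space, $G$ is Hausdorff if and only if $X = G^{(0)}$ is closed in $G$: one direction is the observation that $X$ is the equalizer of $\s$ and $\id_G$, hence closed when $G$ is Hausdorff; for the converse one separates two arrows $g \neq h$, first reducing (via left translation by an open bisection through $h$, which is a homeomorphism) to the case where one of them is a unit $x$ and the other an isotropy element $w \notin X$, and then using that closedness of $X$ provides an open bisection $b \ni w$ with $b \cap X = \emptyset$, disjoint from a bisection $c \ni x$ with $c \subseteq X$. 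Granting this, non-Hausdorffness yields some $w \in \overline{X} \setminus X$ and a net of units $x_i \in X$ with $x_i \to w$; applying $\s$ and $\rg$ and using Hausdorffness of $X$ shows that $x_i$ also converges to $x \coloneqq \s(w) = \rg(w) \in X$, so $w$ is a nontrivial isotropy element at $x$ with $x \neq w$. Choosing $a = f v_b \in \algalg{G}$ supported on an open bisection $b \ni w$ with $f(\rg(w)) = 1$, one checks that $x \notin b$ (since $b$ is a bisection and $\rg(w) = x = \rg(x)$), whence $P(a)(x) = a(x) = 0$ while, eventually $x_i \in b$ and so $P(a)(x_i) = a(x_i) = f(x_i) \to f(\rg(w)) = 1$; as $x_i \to x$ in the Hausdorff space $X$, this contradicts continuity of $P(a)$.

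The main obstacle is exactly this last implication, and within it the passage from abstract non-Hausdorffness (a net with two distinct limits, possibly among non-unit arrows) to a net of \emph{units} witnessing the defect, since $P$ only records the behaviour of $a$ on $X$. Routing through the closedness of $G^{(0)}$ is what makes this passage clean: it converts the two limit points into a unit $x$ and a nearby ghost arrow $w$ lying over the same unit, against which a single bump function on a bisection through $w$ already destroys continuity of the restriction. The remaining implications, by contrast, are soft point-set topology (the pasting lemma and a domination estimate) together with the formal definition of $E_\L$.
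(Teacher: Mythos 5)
Your proof is correct. The paper gives no proof of this lemma (it is quoted as well known), so there is nothing to compare line by line; but your route overlaps substantially with machinery the paper develops later. The intermediate fact you use --- $G$ is Hausdorff if and only if $X$ is closed in $G$ --- is exactly \cref{lemma:dangerous arrows}~\cref{lemma:dangerous arrows:hausdorff}, and your discontinuity witness $f v_b$ is the same device used in the proof of \cref{pro:dangerous-points-discontinuity} (implication \cref{pro:dangerous-points-discontinuity:1}~$\Rightarrow$~\cref{pro:dangerous-points-discontinuity:3}). Note that you can bypass your separation argument for ``$X$ closed $\Rightarrow$ $G$ Hausdorff'' entirely: if $G$ is not Hausdorff, take a net $g_i \to g, h$ with $g \neq h$; continuity of inversion and multiplication yields $\s(g_i) = g_i^{-1}g_i \to g^{-1}h$ together with $\s(g_i) \to \s(g)$ (this is the paper's \cref{lemma:dangerous arrows:convergence}), which hands you directly a net of \emph{units} converging both to the unit $x = \s(g)$ and to the non-unit isotropy arrow $w = g^{-1}h$ --- exactly the input your bump-function construction needs. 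This shortcut also repairs the one soft spot in your write-up: the reduction by left translation to ``a unit versus an isotropy element'' only applies when $\s(g) = \s(h)$ and $\rg(g) = \rg(h)$; when the sources or ranges differ one must instead separate directly using continuity of $\s, \rg$ into the Hausdorff space $X$ (an easy case, and the fact itself is standard, so this is not a genuine gap --- but it is silently skipped). The remaining implications --- pasting for \cref{lemma:hausdorff:hausdorff}~$\Rightarrow$~\cref{lemma:hausdorff:cont}, the domination estimate for \cref{lemma:hausdorff:cont}~$\Rightarrow$~\cref{lemma:hausdorff:p}, and injectivity of $\pi_{\rm ess}$ on $\contz(X)$ (nonempty open sets are non-meager, by Baire) for \cref{lemma:hausdorff:p}~$\Rightarrow$~\cref{lemma:hausdorff:el} --- are all correct.
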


\begin{remark}
    Items \cref{lemma:hausdorff:hausdorff,lemma:hausdorff:el} are \emph{not} equivalent. For instance, the essential conditional expectation \(E_\L\) for the groupoid in \cref{ex:trivial-action} lands in \(\contz(X) \subseteq \borelb(X)/\meager(X)\).
\end{remark}

With the tools of \cref{def:grpd-alg} we may now define three different \cstar algebras associated to \(G\).
\begin{definition} \label{def:grpd-c-algs}
    Let \(G\) be an \'etale groupoid with Hausdorff unit space \(X \subseteq G\).
    \begin{enumerate}[label=(\roman*)]
        \item \(\fullalg{G}\) is the universal (or enveloping) \cstar algebra of \(\algalg{G}\), that is, $\fullalg{G}$ is the completion of $\algalg{G}$ with respect to the largest \cstar{}norm.
        \item \(\redalg{G}\) is a completion of \(\algalg{G}\) such that \(P \colon \algalg{G} \to \borelb(X)\) extends to a faithful positive map \(P \colon \redalg{G} \to \borelb(X)\) (that we also denote by \(P\) by abuse of notation).
        \item \(\essalg{G}\) is a (Hausdorff) completion of \(\algalg{G}\) such that \(E_\L \colon \algalg{G} \to \borelb(X)/\meager(X)\) induces a faithful positive map \(E_\L \colon \essalg{G} \to \borelb(X)/\meager(X)\) (that we also denote by \(E_\L\) by abuse of notation).
    \end{enumerate}
\end{definition}

Note that the approach to \(\redalg{G}\) and \(\essalg{G}\) taken in \cref{def:grpd-c-algs} does not define the algebras, as it is not clear whether they need to be necessarily unique.
Nevertheless, we claim that both $\redalg{G}$ and $\essalg{G}$ are the unique quotients of $\fullalg{G}$ with the properties stated in \cref{def:grpd-c-algs}. More precisely, if  $\rho\colon \fullalg{G} \to B$ is a quotient map and $Q \colon B \to C_0(X)''$ is a faithful weak expectation that factors the standard expectation $P$, meaning that $Q\circ\rho=P$, then $B\cong \redalg{G}$. A similar result holds for $\essalg{G}$.
In fact, both cases follow from the following.
\begin{proposition} \label{prop: generalized cond exp well defined algs}
    Suppose $A$ is a \cstar{}algebra containing a \cstar{}subalgebra $A_0\sbe A$ with a generalized conditional expectation $E\colon A\to \tilde A_0\supseteq A_0$, where $\tilde A_0$ is some \cstar{}algebra containing $A_0$ as a \cstar{}subalgebra. Then there is at most one quotient $A_{\rm red}$ of $A$ for which $E$ factors through a faithful generalized conditional expectation $E_{\rm red} \colon A_{\rm red} \to \tilde A_0\supseteq A_0$. More precisely, if such $A_{\rm red}$ exists and $\lambda\colon A\to A_{\rm red}$ denotes the quotient homomorphism, then for any other \cstar{}algebra $B$ that is a quotient of $A$ via a quotient homomorphism $\rho\colon A\to B$ and carries a faithful generalized expectation $Q\colon B\to \tilde A_0\supset A_0$ with $Q\circ\rho=E=E_{\rm red}\circ\lambda$, we must have $B\cong A_{\rm red}$ via an isomorphism that permutes the expectations.
\end{proposition}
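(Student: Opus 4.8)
The plan is to prove that the two quotient maps $\lambda$ and $\rho$ have \emph{the same kernel}, characterised intrinsically through $E$ alone, and then to invoke the first isomorphism theorem for \cstar{}algebras. The guiding principle is that the faithfulness of a factored expectation rigidly determines the kernel of the quotient through which it factors, so any two quotients supporting such a factorisation are forced to coincide.

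First I would compute $\ker\lambda$. Since $\lambda$ is a \Star{}homomorphism, for $a\in A$ we have $\lambda(a)=0$ if and only if $\lambda(a)^*\lambda(a)=\lambda(a^*a)=0$, by the \cstar{}identity. Applying $E_{\rm red}$ and using $E_{\rm red}\circ\lambda=E$, this reads $E(a^*a)=E_{\rm red}(\lambda(a^*a))=0$; since $E_{\rm red}$ is faithful on positive elements, the vanishing of $E_{\rm red}(\lambda(a^*a))$ is in turn equivalent to $\lambda(a^*a)=0$. Hence
\[
    \ker\lambda=\{a\in A : E(a^*a)=0\}.
\]
Running the identical argument with $\rho$, $Q$ and $Q\circ\rho=E$ gives $\ker\rho=\{a\in A : E(a^*a)=0\}$ as well, so $\ker\lambda=\ker\rho$.

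Because $\lambda$ and $\rho$ are surjective \Star{}homomorphisms with equal kernels, the first isomorphism theorem yields a unique \Star{}isomorphism $\phi\colon A_{\rm red}\to B$ satisfying $\phi\circ\lambda=\rho$. It remains only to verify that $\phi$ permutes the expectations. As $\lambda$ is surjective, every element of $A_{\rm red}$ has the form $\lambda(a)$, and then
\[
    Q(\phi(\lambda(a)))=Q(\rho(a))=E(a)=E_{\rm red}(\lambda(a)),
\]
so $Q\circ\phi=E_{\rm red}$, which is precisely the claimed compatibility.

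The argument is essentially formal; the only delicate point is the precise meaning of ``faithful generalized conditional expectation''. I would use it in the form that $E_{\rm red}$ (resp.\ $Q$) is positive and satisfies $E_{\rm red}(c)=0\Rightarrow c=0$ for $c\ge 0$, which is exactly what converts $E(a^*a)=0$ into $\lambda(a^*a)=0$. Notably, complete positivity is never invoked---only positivity together with faithfulness on positive elements enters---and the possibly strict inclusion $A_0\subsetneq\tilde A_0$ (for instance $\tilde A_0=\borelb(X)$ or $\borelb(X)/\meager(X)$ in the intended applications) plays no role whatsoever.
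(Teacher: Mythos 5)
Your proof is correct and follows essentially the same route as the paper: both arguments identify $\ker\lambda=\ker\rho=\{a\in A:E(a^*a)=0\}$ using the \cstar{}identity, the factorisations $Q\circ\rho=E=E_{\rm red}\circ\lambda$, and the faithfulness of the two expectations, and then conclude $B\cong A/\ker\rho=A/\ker\lambda\cong A_{\rm red}$. Your explicit verification that the induced isomorphism intertwines $Q$ and $E_{\rm red}$ is a detail the paper leaves implicit, but the argument is the same.
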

\begin{proof}
    Since both $E_{\rm red}$ and $Q$ are faithful, and $Q\circ \rho=E_{\rm red} \circ \lambda$, we have
    $$\ker(\rho)=\{a\in A: \rho(a^*a)=0\}=\{a\in A: Q(\rho(a^*a))=0\}$$
    $$=\{a\in A: E_{\rm red}(\lambda(a^*a))=0\}=\ker(\lambda),$$
    so that 
    $$B\cong A/\ker(\rho)=A/\ker(\lambda)\cong A_{\rm red}.$$
\end{proof}

An immediate consequence of \cref{prop: generalized cond exp well defined algs} is that \cref{def:grpd-c-algs} does define \(\redalg{G}\) and \(\essalg{G}\).
In fact, as a corollary of the above approach we get the following commutative diagram.
\begin{corollary} \label{cor:diagram:basic}
    Let \(G\) be an \'etale groupoid with l.c.\ Hausdorff unit space \(X \subseteq G\). The identity map \(\id \colon \algalg{G} \to \algalg{G}\) induces quotient maps making
    \begin{center}
        \begin{tikzcd}[scale=50em]
            \fullalg{G} \arrow[two heads]{dr}{\Lambda^{\rm ess}} \arrow[two heads]{r}{\Lambda} & \redalg{G} \arrow[two heads]{d}{\Lambda^{\rm red}_{E_\L, P}} \arrow{r}{P} & \borelb\left(X\right)\arrow[two heads]{d}{\pi_{\text{ess}}}  \\
            & \essalg{G} \arrow{r}{E_\L} & \borelb\left(X\right)/\meager\left(X\right)
        \end{tikzcd}
    \end{center}
    commute. Moreover, \(P\) and \(E_\L\) are faithful.
\end{corollary}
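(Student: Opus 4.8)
The plan is to build the entire diagram from the universal property of $\fullalg{G}$ and then to read off commutativity and faithfulness from the defining properties of $\redalg{G}$ and $\essalg{G}$, together with the uniqueness already recorded in \cref{prop: generalized cond exp well defined algs}. First I would produce the two maps out of $\fullalg{G}$: since $\fullalg{G}$ carries the \emph{largest} \cstar{}norm on $\algalg{G}$ (cf.\ \cref{def:grpd-c-algs}), and both $\redalg{G}$ and $\essalg{G}$ are \cstar{}completions of $\algalg{G}$, the identity map on $\algalg{G}$ is norm-decreasing into each completion and hence extends to surjective $^*$-homomorphisms $\Lambda\colon\fullalg{G}\onto\redalg{G}$ and $\Lambda^{\rm ess}\colon\fullalg{G}\onto\essalg{G}$, both restricting to the identity on the dense subalgebra $\algalg{G}$. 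The faithfulness of $P$ on $\redalg{G}$ and of $E_\L$ on $\essalg{G}$ is then exactly their defining property, made unambiguous by \cref{prop: generalized cond exp well defined algs}, so the ``moreover'' clause requires no further work.

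The key step is the construction of the middle vertical arrow $\Lambda^{\rm red}_{E_\L,P}$, which I would obtain by comparing kernels. By \cref{def:grpd-alg} we have $E_\L=\pi_{\rm ess}\circ P$ on $\algalg{G}$; since $P$ extends to a bounded positive map on $\redalg{G}$ and $E_\L$ to one on $\essalg{G}$, while $\Lambda$ and $\Lambda^{\rm ess}$ are continuous and restrict to the identity on the dense subalgebra $\algalg{G}$, the relation
$$\pi_{\rm ess}\circ P\circ\Lambda=E_\L\circ\Lambda^{\rm ess}$$
holds on $\algalg{G}$ and hence on all of $\fullalg{G}$ by continuity and density. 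Using that $\Lambda$ is a $^*$-homomorphism (so $\Lambda(a)^*\Lambda(a)=\Lambda(a^*a)$ and the \cstar{}identity applies) together with faithfulness of $P$ and $E_\L$, I would then compute
$$\ker\Lambda=\{a:P(\Lambda(a^*a))=0\}\sbe\{a:\pi_{\rm ess}(P(\Lambda(a^*a)))=0\}=\{a:E_\L(\Lambda^{\rm ess}(a^*a))=0\}=\ker\Lambda^{\rm ess}.$$
The inclusion $\ker\Lambda\sbe\ker\Lambda^{\rm ess}$ is precisely what the universal property of quotient \cstar{}algebras needs in order to yield a unique surjective $^*$-homomorphism $\Lambda^{\rm red}_{E_\L,P}\colon\redalg{G}\onto\essalg{G}$ with $\Lambda^{\rm red}_{E_\L,P}\circ\Lambda=\Lambda^{\rm ess}$, again restricting to the identity on $\algalg{G}$.

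It then remains to verify that the two squares commute. The left triangle $\Lambda^{\rm red}_{E_\L,P}\circ\Lambda=\Lambda^{\rm ess}$ is built into the construction. For the right square, the maps $E_\L\circ\Lambda^{\rm red}_{E_\L,P}$ and $\pi_{\rm ess}\circ P$ are continuous maps $\redalg{G}\to\borelb(X)/\meager(X)$ agreeing with $E_\L=\pi_{\rm ess}\circ P$ on the dense subalgebra $\algalg{G}$, hence agreeing everywhere; precomposing with the surjection $\Lambda$ recovers the displayed identity, so everything is consistent. The only point demanding genuine care is the boundedness of $P$ on $\redalg{G}$ and of $E_\L$ on $\essalg{G}$, since it is this continuity that lets identities checked on $\algalg{G}$ pass to the completions; this is standard, as generalized conditional expectations of this type are contractive. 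I expect this to be the sole subtlety, the remainder being a bookkeeping of universal properties and faithful-expectation kernels exactly as in the proof of \cref{prop: generalized cond exp well defined algs}.
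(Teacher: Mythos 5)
Your proposal is correct and follows essentially the same route as the paper: the corollary is stated there as an immediate consequence of \cref{prop: generalized cond exp well defined algs}, whose proof is precisely your kernel comparison \(\ker\Lambda=\{a: P(\Lambda(a^*a))=0\}\sbe\{a: E_\L(\Lambda^{\rm ess}(a^*a))=0\}=\ker\Lambda^{\rm ess}\) via the faithful expectations, combined with the universal property of \(\fullalg{G}\) for the horizontal arrows and density for the right-hand square. The only (harmless) imprecision is your claim that \(\Lambda^{\rm ess}\) restricts to the identity on \(\algalg{G}\): in the non-Hausdorff case \(\algalg{G}\) need not embed into \(\essalg{G}\), so \(\Lambda^{\rm ess}|_{\algalg{G}}\) is the canonical quotient map onto \(\essalgalg{G}\) rather than an identity --- which is all your argument actually uses.
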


One of the main contributions of the text is an enrichment of the above diagram (cf.\ \cref{cor:diagram}). We end the preliminary section recording the following standard result.
\begin{lemma} \label{lemma:norms red infty max}
    Let \(a \in \algalg{G}\), where \(G\) is some \'etale groupoid. Then, viewing $a$ as an element of the \cstar{}algebras $\essalg G$, $\redalg{G}$ and $\fullalg G$, we have
    \[
        \norm{a}_{\rm ess} \leq \norm{a}_{\rm red} \leq \norm{a}_{\rm max},
    \]
    and all of the above are equal to $\norm{a}_{\infty}\coloneqq\sup_{g\in G}|a(g)|$ when \(a\) is supported on a single bisection.
\end{lemma}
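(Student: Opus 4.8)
The plan is to treat the two assertions separately: the inequality chain follows formally from the quotient structure already recorded in \cref{cor:diagram:basic}, while the single-bisection equalities reduce, via the C*-identity, to a sup-norm computation on the unit space.

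For the inequalities, I would simply invoke \cref{cor:diagram:basic}: the identity map on \(\algalg{G}\) induces surjective \(*\)-homomorphisms \(\fullalg{G} \twoheadrightarrow \redalg{G} \twoheadrightarrow \essalg{G}\) that restrict to the identity on \(\algalg{G}\). Since every \(*\)-homomorphism between C*-algebras is norm-decreasing, applying these maps to \(a \in \algalg{G}\) gives \(\norm{a}_{\mathrm{ess}} \leq \norm{a}_{\mathrm{red}} \leq \norm{a}_{\mathrm{max}}\) at once.

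For the single-bisection case, write \(a = a_1 v_s\) with \(a_1 \in \contz(ss^*)\). A direct convolution computation shows that \(a^* a\) is supported on \(\s(s) = s^*s \subseteq X\) and equals the pullback of \(|a_1|^2\) along the homeomorphism \(\s(s) \cong \rg(s)\) induced by the bisection \(s\); in particular \(a^* a \in \contz(X)\) and \(\norm{a^* a}_\infty = \norm{a_1}_\infty^2 = \norm{a}_\infty^2\). By the C*-identity it therefore suffices to prove that each of the three norms restricts to the sup-norm on the canonical copy of \(\contz(X)\), for then \(\norm{a}_\bullet^2 = \norm{a^* a}_\bullet = \norm{a^* a}_\infty = \norm{a}_\infty^2\) in each algebra \(\bullet \in \{\mathrm{ess},\mathrm{red},\mathrm{max}\}\), and taking square roots finishes the proof.

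The heart of the matter is thus that \(\contz(X)\) embeds isometrically into each completion. Since an injective \(*\)-homomorphism between C*-algebras is automatically isometric, and \(\contz(X)\) carries a unique C*-norm (namely its sup-norm), it is enough to check injectivity of each canonical inclusion. For \(\redalg{G}\) this is clear from faithfulness of \(P\) together with \(P|_{\contz(X)} = \Id\): if \(b \in \contz(X)\) has \(\norm{b}_{\mathrm{red}} = 0\) then \(b = P(b) = 0\). For \(\fullalg{G}\), injectivity follows because the inclusion factors through the (just-established isometric, hence injective) inclusion into \(\redalg{G}\). The one genuinely non-formal point — and the step I expect to be the main obstacle — is injectivity into \(\essalg{G}\): here I would use faithfulness of \(E_\L\) to reduce to injectivity of \(\pi_{\mathrm{ess}}|_{\contz(X)}\), whose kernel is \(\contz(X) \cap \meager(X)\), the continuous functions vanishing off a meager set. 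A nonzero such function would have nonempty open support contained in a meager set, contradicting the Baire category theorem for the locally compact Hausdorff space \(X\); hence the kernel is trivial. Combining the three isometric embeddings with the C*-identity computation above yields \(\norm{a}_{\mathrm{ess}} = \norm{a}_{\mathrm{red}} = \norm{a}_{\mathrm{max}} = \norm{a}_\infty\).
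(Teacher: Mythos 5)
Your proof is correct. Note that the paper records \cref{lemma:norms red infty max} without proof (it is stated at the end of the preliminaries as a known fact), so there is no in-paper argument to compare against; your write-up supplies the standard justification one would expect: the inequality chain follows from contractivity of the quotient maps of \cref{cor:diagram:basic}, and the single-bisection equalities follow from the C*-identity once \(\contz(X)\) is known to embed isometrically in each completion, which you correctly reduce to injectivity of the three canonical \(*\)-homomorphisms (via faithfulness of \(P\), the norm inequality for \(\fullalg{G}\), and faithfulness of \(E_\L\) combined with Baire category for \(\essalg{G}\)). One phrasing quibble: for \(\fullalg{G}\) you say the inclusion ``factors through'' the inclusion into \(\redalg{G}\); the factorization goes the other way, namely the inclusion into \(\redalg{G}\) factors as \(\contz(X) \to \fullalg{G} \xrightarrow{\Lambda_P} \redalg{G}\), whence injectivity of the first arrow (equivalently, \(\norm{b}_{\rm max} \geq \norm{b}_{\rm red} = \norm{b}_\infty > 0\) for \(b \neq 0\)). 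This slip does not affect the validity of the argument.
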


\section{Essential representations and essential algebras} \label{sec:ess-reps-algs}

In this section we produce yet another groupoid \cstar algebra (the \emph{maximal} version of \(\essalg{G}\), which we denote by \(\essmaxalg{G}\), cf.\ \cref{def:max ess alg}) and yet another approach to \(\essalg{G}\) (cf.\ \cref{prop:rep red and ess calgs:ess}). In order to do this, however, we need a certain amount of preparatory work.
\begin{definition} \label{def:dangerous arrow}
    Let \(G\) be an \'etale groupoid.
    \begin{enumerate}[label=(\roman*)]
        \item An arrow \(g \in G\) is \emph{dangerous} if there are \((g_i)_{i} \subseteq G\) and \(h \in G \setminus \{g\}\) such that \(g_i \to g, h\).
        \item We let \(D \subseteq G\) be the set of dangerous arrows.
    \end{enumerate}
\end{definition}

\begin{notation}
    From now on, \(D \subseteq G\) will always denote the set of dangerous arrows in the sense of \cref{def:dangerous arrow}, even if not explicitly mentioned.
\end{notation}

Roughly speaking, an arrow \(g \in G\) is dangerous whenever it witnesses the ``non-Hausdorffness'' of \(G\). More precisely, $g\in D$ if and only if there exists $h\not=g$ in $G$ that cannot be topologically separated from $g$ in the sense that for all neighborhoods $U$ and $V$ of $g$ and $h$, respectively, we have $U\cap V\not=\emptyset$. This speaks to, in fact, the nomenclature used in \cite{bkmk-2024-twited-grpds}, for instance, where an arrow is \emph{Hausdorff} if, in our notation, it is not dangerous.

\begin{lemma} \label{lemma:convergence in a bisection}
    Suppose that \(x_i \to \s(g)\) for some \((x_i)_i \subseteq X\) and \(g \in G\). Then, by passing to a subnet if necessary, there are \((g_i)_i \subseteq G\) such that \(\s(g_i) = x_i\) and \(g_i \to g\).
\end{lemma}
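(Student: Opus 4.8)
The plan is to exploit the single defining feature of an étale groupoid that matters here, namely that the source map $\s$ is a local homeomorphism, and to lift $x_i$ through a local inverse of $\s$ near $g$. First I would pick an open bisection $U \subseteq G$ with $g \in U$; this is possible because the open bisections form a basis for the topology of $G$. On $U$ the source map restricts to a homeomorphism $\s|_U \colon U \congto \s(U)$ onto an \emph{open} subset $\s(U) \subseteq X$, the openness being precisely the local-homeomorphism property of $\s$ for étale groupoids.

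Next I would observe that $\s(U)$ is an open neighbourhood of $\s(g)$, so the hypothesis $x_i \to \s(g)$ forces $x_i \in \s(U)$ for all $i$ past some index $i_0$. Restricting to this tail---which is the subnet allowed for in the statement---I may assume $x_i \in \s(U)$ for every $i$, and then set
\[
    g_i \coloneqq (\s|_U)^{-1}(x_i) \in U .
\]
By construction $\s(g_i) = x_i$. Since $(\s|_U)^{-1}$ is continuous and $x_i \to \s(g)$, continuity yields $g_i = (\s|_U)^{-1}(x_i) \to (\s|_U)^{-1}(\s(g)) = g$ (using $g \in U$), which is exactly the desired convergence.

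I do not anticipate a genuine obstacle: the statement is a direct manifestation of étale-ness, and the only points requiring a word of care are the tail/subnet bookkeeping and the standard fact that $\s$ carries open bisections onto open subsets of $X$. In fact one could dispense with the subnet altogether by declaring $g_i$ to be the unit $x_i \in X \subseteq G$ for the (finitely relevant) initial indices $i < i_0$, since the convergence $g_i \to g$ depends only on the tail of the net; but passing to the subnet, as in the statement, keeps the argument cleanest.
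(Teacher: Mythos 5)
Your proof is correct and rests on exactly the same idea as the paper's: the source map restricted to an open bisection neighbourhood of \(g\) is a homeomorphism onto an open neighbourhood of \(\s(g)\), through which one lifts the net \((x_i)_i\). The only difference is bookkeeping — the paper fixes a whole basis \(\mathcal{U}\) of bisection neighbourhoods of \(g\) and passes to a subnet indexed by \(\mathcal{U}\), whereas you fix a single bisection and take a tail of the net (indeed, as you note, even the subnet is dispensable), which is a mild simplification of the same argument.
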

\begin{proof}
    As \(G\) is \'etale, let \(\mathcal{U}\) be a basis of open neighborhoods of \(g\) formed by bisections.
    For any \(u \in \mathcal{U}\) it follows that \(x_i \in u^*u=\s(u)\) for all \(i \geq i(u)\) (for some large enough index \(i(u)\)).
    In particular, since \(u\) is a bisection, there is some arrow \(g_{i, u} \in u\) such that \(\s(g_{i,u}) = x_i\). It then follows that the subnet \((g_{i(u),u})_{u} \subseteq G\) converges to \(g\).
\end{proof}

\begin{lemma} \label{lemma:dangerous arrows:convergence}
Let \((g_i)_{i} \subseteq G \ni g, h\) be such that \(g_i \to g, h\). Then \(\s(g_i) \to \s(g) = \s(h)\) and \(\rg(g_i) \to \rg(g) = \rg(h)\). In particular, \(g^{-1}h \in xGx\), where \(x \coloneqq \s(h)\), and \(\s(g_i) \to x, g^{-1}h\).
\end{lemma}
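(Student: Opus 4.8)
The plan is to decompose the statement into three independent pieces and handle each via the continuity of the groupoid operations together with the fact that the unit space \(X\) is Hausdorff. First I would observe that, since \(\s\colon G\to X\) is continuous and \(g_i\to g\) as well as \(g_i\to h\), the net \((\s(g_i))_i\) converges in \(X\) to both \(\s(g)\) and \(\s(h)\); because \(X\) is Hausdorff, net limits there are unique, so \(\s(g)=\s(h)\) and \(\s(g_i)\to\s(g)=\s(h)\). The same argument applied to \(\rg\) yields \(\rg(g_i)\to\rg(g)=\rg(h)\). This settles the first sentence of the statement.

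Next I would use these two equalities to produce the isotropy element. Since \(\s(g^{-1})=\rg(g)=\rg(h)\), the pair \((g^{-1},h)\) is composable, so \(g^{-1}h\) is a well-defined arrow. Writing \(x\coloneqq \s(h)=\s(g)\), a direct computation of source and range gives \(\s(g^{-1}h)=\s(h)=x\) and \(\rg(g^{-1}h)=\rg(g^{-1})=\s(g)=x\), whence \(g^{-1}h\in xGx\) as claimed.

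For the final claim I would realize \(\s(g_i)\) as a product and pass to the limit. Continuity of the inverse map gives \(g_i^{-1}\to g^{-1}\), while simultaneously \(g_i\to h\); since \(\s(g_i)=g_i^{-1}g_i\), I would feed the pairs \((g_i^{-1},g_i)\) into the multiplication map \(m\colon G^{(2)}\to G\) and obtain \(\s(g_i)=m(g_i^{-1},g_i)\to m(g^{-1},h)=g^{-1}h\). Combined with \(\s(g_i)\to x\) from the first step, this yields \(\s(g_i)\to x,\,g^{-1}h\).

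The step requiring care is this last one: to invoke continuity of \(m\) one must verify that the net \((g_i^{-1},g_i)\), which lives in \(G^{(2)}\), converges in \(G^{(2)}\) to a pair that is itself composable. The pairs are composable because \(\s(g_i^{-1})=\rg(g_i)\) holds automatically, and the limit \((g^{-1},h)\) is composable precisely because of the equality \(\rg(g)=\rg(h)\) established at the outset; since \((g^{-1},h)\in G^{(2)}\), convergence in \(G\times G\) forces convergence in the subspace \(G^{(2)}\), and continuity of \(m\) then applies. Everything else is routine bookkeeping with the identities \(\s(a^{-1})=\rg(a)\), \(a^{-1}a=\s(a)\) and \(\rg(a^{-1})=\s(a)\), so I do not expect any genuine obstacle beyond keeping the asymmetric use of the two limits (one factor converging to \(g^{-1}\), the other to \(h\)) straight.
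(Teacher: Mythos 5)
Your proposal is correct and takes essentially the same route as the paper: continuity of \(\s,\rg\) plus Hausdorffness of \(X\) for the first part, and then the same asymmetric-limit trick \(\s(g_i)=g_i^{-1}g_i \to g^{-1}h\) via continuity of inversion and multiplication for the last claim. The only difference is that you spell out the (correct) subspace-convergence justification for applying continuity of the multiplication on \(G^{(2)}\), which the paper leaves implicit.
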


\begin{proof}
The first statements follow from the fact that $\s$ and $\rg$ are continuous maps. Since $X=G^{(0)}$ is Hausdorff, if \(g_i \to g, h\) then \(\s(g) = \s(h)\) and $\rg(g)=\rg(h)$. In particular, \(g^{-1}h \in xGx\), where \(x \coloneqq h^{-1}h\). Since the multiplication on $G$ is continuous, 
we get \(\s(g_i)=g_i^{-1} g_i \to g^{-1}h\), as desired.
\end{proof}

The following description of the dangerous arrows \(D \subseteq G\) will be useful. The following should be compared to \cite{bkmk-2024-twited-grpds}*{Lemma 4.3}, which was found independently.
\begin{lemma} \label{lemma:dangerous arrows}
    Let \(G\) be an \'etale groupoid with Hausdorff unit space \(X \subseteq G\), and let \(D \subseteq G\) be the set of dangerous arrows. The following assertions hold.
    \begin{enumerate}[label=(\roman*)] 
        \item \label{lemma:dangerous arrows:hausdorff} \(D\) is non empty if and only if \(X\) is not closed (in \(G\)) if and only if \(G\) is not Hausdorff. In such case, \(G\) is not principal.
        \item \label{lemma:dangerous arrows:subgroupoid} \(D\) is an \emph{ideal subgroupoid} of \(G\) in the sense that \(gh, kg \in D\)  whenever \(g \in D\) and \(h, k \in G\) are composable with \(g\).
    \end{enumerate}
    In particular, \(g \in D\) if and only if \(\s(g) \in D\), if and only if \(\rg(g) \in D\). Moreover, \(G\) decomposes into the sub-groupoids \(G = D \sqcup (G \setminus D)\) (although this decomposition may not be into closed sets).
\end{lemma}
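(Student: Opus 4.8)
The plan is to prove the three equivalences in \ref{lemma:dangerous arrows:hausdorff} as a cycle of implications, then read off both the non-principality and the ideal property in \ref{lemma:dangerous arrows:subgroupoid} from the two convergence lemmas already at hand, namely \cref{lemma:convergence in a bisection} and \cref{lemma:dangerous arrows:convergence}.

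First I would dispatch the equivalences. That $G$ not Hausdorff implies $D\neq\emptyset$ is essentially the definition: non-Hausdorffness furnishes a net $(g_i)_i$ with two distinct limits $g\neq h$, so $g\in D$. For $D\neq\emptyset\Rightarrow X$ not closed, I would pick a dangerous $g$ with witnesses $g_i\to g,h$ and $h\neq g$, and feed this into \cref{lemma:dangerous arrows:convergence} to obtain $\s(g_i)\to x$ and $\s(g_i)\to g^{-1}h\in xGx$, where $x\coloneqq\s(h)$. A one-line cancellation argument shows $g^{-1}h\neq x$ (otherwise $g^{-1}\rg(h)=g^{-1}=h^{-1}$, forcing $g=h$), hence $g^{-1}h\in G\setminus X$; thus the net $(\s(g_i))_i\subseteq X$ converges to a point lying outside $X$, so $X$ is not closed. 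Finally, for $X$ not closed $\Rightarrow G$ not Hausdorff, I would take $x_i\to g$ with $x_i\in X$ and $g\notin X$; continuity of $\s$ gives $x_i=\s(x_i)\to\s(g)$ as well, and since $g\neq\s(g)$ the net has two distinct limits, witnessing non-Hausdorffness. The element $g^{-1}h\in xGx\setminus\{x\}$ produced above is a nontrivial isotropy element, which yields ``$G$ is not principal'' for free.

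For the ideal property I would first record that $D=D^*$: inversion is continuous, so $g_i\to g,h$ yields $g_i^{-1}\to g^{-1},h^{-1}$ with $g^{-1}\neq h^{-1}$, whence $g\in D\Rightarrow g^{-1}\in D$. Now suppose $g\in D$ with witnesses $g_i\to g,g'$ and $g'\neq g$, and let $h$ be composable with $g$, i.e.\ $\s(g)=\rg(h)$. Since $\s(g_i)\to\s(g)=\rg(h)=\s(h^{-1})$, \cref{lemma:convergence in a bisection} lets me pass to a subnet and lift: there are $f_i\to h^{-1}$ with $\s(f_i)=\s(g_i)$, so that $h_i\coloneqq f_i^{-1}\to h$ satisfies $\rg(h_i)=\s(g_i)$ and the products $g_ih_i$ are defined. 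By \cref{lemma:dangerous arrows:convergence} we have $\s(g')=\s(g)=\rg(h)$, so $g'h$ is defined as well; continuity of multiplication then gives $g_ih_i\to gh$ and $g_ih_i\to g'h$, and since $gh=g'h$ would force $g=g'$ by right cancellation, we conclude $gh\in D$. The case $kg\in D$ follows by applying this to $g^{-1}\in D$ and $k^{-1}$ and using $D=D^*$. The ``in particular'' is then immediate from \ref{lemma:dangerous arrows:subgroupoid}: $\rg(g)=gg^{-1}$ and $\s(g)=g^{-1}g$ are products of the dangerous arrow $g$ (resp.\ $g^{-1}$) with a composable arrow, hence dangerous, while conversely $g=\rg(g)\,g=g\,\s(g)$ recovers $g$ from either unit.

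I expect the main obstacle to be the careful bookkeeping in the ideal property: I must check that passing to a subnet when lifting $h$ does not disturb the convergence $g_i\to g,g'$, and I must justify the simultaneous convergence $g_ih_i\to gh,g'h$ by verifying that both $(g,h)$ and $(g',h)$ lie in $G^{(2)}$ and that continuity of multiplication applies to a net in $G^{(2)}$ converging in $G\times G$ to a composable pair. Beyond this, the only delicate points are the two cancellation computations $g^{-1}h\neq x$ and $gh\neq g'h$, both of which reduce to left/right cancellation in the groupoid.
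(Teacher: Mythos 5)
Your proposal is correct and takes essentially the same route as the paper's proof: both parts rest on \cref{lemma:convergence in a bisection} and \cref{lemma:dangerous arrows:convergence}, lifting the composable arrow to a net \((h_i)_i\) along a subnet, then invoking continuity of multiplication/inversion and cancellation to produce two distinct limits. The only differences are presentational — you organize \cref{lemma:dangerous arrows:hausdorff} as an explicit cycle of implications with the cancellation \(g^{-1}h \neq x\) spelled out, and you deduce \(kg \in D\) from the \(gh\) case via \(D = D^*\), where the paper proves it ``similarly'' and treats inversion separately.
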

\begin{proof}
    Statement \cref{lemma:dangerous arrows:hausdorff} trivially follows from \cref{lemma:dangerous arrows:convergence}. Indeed, if \(g_i \to g, h\) and \(g \neq h\) then, by \cref{lemma:dangerous arrows:convergence}, \(g^{-1} h\) is some isotropy arrow, and it is non trivial since \(g \neq h\). In this case, \(G\) is not principal. Likewise, if \(X\) is not closed then, by definition, there is some \((x_i)_i \subseteq X\) converging to \(g \in G \setminus X\), but, again by \cref{lemma:dangerous arrows:convergence}, it follows that \(x_i \to \s(g), g, \rg(g)\), \emph{i.e.}\ \(D\) is non empty.

    We now turn our attention to \cref{lemma:dangerous arrows:subgroupoid}.
    We first show that \(hg \in D\) whenever \(g \in D\) and \((h, g) \in G^{(2)}\).
    Suppose that \(g_i \to g, \ell\), where \(g \neq \ell\). By \cref{lemma:dangerous arrows:convergence} it follows that \(\rg(g_i) \to \rg(g) = \rg(h) = \rg(\ell)\) (and similarly for the domains).
    By passing to a subnet if necessary and using \cref{lemma:convergence in a bisection}, we may assume, without loss of generality, that there is some \((h_i)_i \subseteq G\) such that \(h_i \to h\) and \(\s(h_i) = \s(g_i)\).
    It then follows from the fact that multiplication in \(G\) is continuous
    that \(h_ig_i \to hg, h\ell\). Since \(g \neq \ell\), this proves that \(hg \in D\). The proof that \(gk \in D\) whenever \(g \in D\) and \((g,k) \in G^{(2)}\) is similar.
    In order to finish the proof of \cref{lemma:dangerous arrows:subgroupoid} we just need to show that \(g^{-1} \in D\) when \(g \in D\), but this follows from the fact that the map \(g \mapsto g^{-1}\) is continuous. Lastly, the ``in particular'' statement follows from \(D\) being a subgroupoid.
\end{proof}

\begin{remark} \label{rem:d not loc comp}
    It may seem that \cref{lemma:dangerous arrows} describes \(D \subseteq G\) rather nicely, but this is far from the truth.
    \begin{enumerate}[label=(\roman*)]
        \item Algebraically the set \(D \subseteq G\) is indeed well-behaved, since it is an ideal subgroupoid, that is, it ``absorbs'' arrows (cf.\ \cref{lemma:dangerous arrows}~\cref{lemma:dangerous arrows:subgroupoid}).
        \item Topologically, if \(G\) is minimal,\footnote{\, A groupoid is \emph{minimal} when the orbit of every unit \(x \in X \subseteq G\) is dense, \textit{i.e.}\ \(\{gxg^{-1} \mid g \in Gx\}\) is dense in \(X\).} then the set \(D^{(0)}=X \cap D\) -- the unit space of $D$ as a groupoid -- is dense in \(X\) whenever $G$ is not Hausdorff. Unless $D^{(0)}=X$, \(D\) is not closed in \(G\), and it may even not be locally compact.
        \item \label{rem:d not loc comp:3} It is also worth noting that \(xGx \cap \overline{X}\) for $x\in X$ may \emph{not} be a group, as one could have hoped for. In particular, it is \emph{not} true that \(xGx \cap \overline{X} = xGx \cap D\), as the latter is a group. Indeed, let \(S\) be the inverse semigroup generated (as an inverse semigroup) by a copy of \(\Z^2\), together with \(3\) central idempotents \(\{0, e, f\}\), subject to \(0 \leq e \leq (1, 0)\), \(0 \leq f \leq (0, 1)\) and \(ef = 0\), where \(\Z^2 = \langle (1,0), (0,1) \rangle\). Let \(S\) act trivially on \([0, 2]\), where the domain of \(0\) is empty; the domain of \(e\) is \([0, 1)\) and the domain of \(f\) is \((1, 2]\). In such case, the unit \(x \coloneqq 1 \in [0, 2] \subseteq [0, 2] \rtimes S \eqqcolon G\) is dangerous, and \(xGx \cap \overline{X}\) contains the (non-unit) arrows \([(1,0), 1]\) and \([(0, 1), 1]\). Nevertheless, it does \emph{not} contain \([(1, 1), 1]\).
    \end{enumerate}
\end{remark}

The following nomenclature may not be customary, since some people denote by \(\supp(a)\) the closure of the set we are interested in here. What we denote by $\supp(a)$ is sometimes called the \emph{open support} of $a$. Notice, however, that $\supp(a)$ is also not open, in general, as $a$ need not be continuous.
\begin{notation}
    Given a function \(a \in \algalg{G}\), we denote by \(\supp(a)\) the set of arrows \(g \in G\) such that \(a(g) \neq 0\).
\end{notation}

\begin{lemma} \label{lemma:supp is compact}
    Each function \(a \in \algalg G\) is compactly supported, meaning that \(\supp(a)\) is contained in some compact set \(K \subseteq G\).
\end{lemma}
\begin{proof}
   Write $a=a_1+\ldots+a_n$ with $a_i\in \contc(s_i)$ for some open bisections $s_i\sbe G$. In particular $\supp(a_i)\sbe K_i$ for some compact subset $K_i\sbe s_i\sbe G$. Then $\supp(a)$ is contained in the compact subset $K \coloneqq K_1\cup\cdots\cup K_n$. 
\end{proof}

For the purposes of this text, the following result is crucial. Recall that a \emph{continuous} function \(a \colon X \to \CCC\) on some topological space \(X\) that happens to be \(0\) on a dense subset must always be \(0\). The following result states the analogous result in the non-commutative non-Hausdorff setting and gives some interesting characterizations of the dangerous arrows $D\sbe G$.

\begin{proposition}\label{pro:dangerous-points-discontinuity}
Let $G$ be a locally compact étale groupoid with Hausdorff unit space $X$. Let $D\sbe G$ be the set of dangerous arrows. Then the following are equivalent for an arrow $g\in G$.
\begin{enumerate}[label=(\roman*)]
    \item \label{pro:dangerous-points-discontinuity:1} $g$ is a dangerous arrow, \emph{i.e.}\ $g\in D$;
    \item \label{pro:dangerous-points-discontinuity:2} there exists a compact (neighborhood bisection) $K\sbe G$ with $g\in \overline K\backslash K$;
    \item \label{pro:dangerous-points-discontinuity:3} there exists $a\in \algalg G$ and a net $(g_n)_n \sbe G$ with \(g_n \to g\) and  
        $$a(g_n)=0\mbox{ for all }n\mbox{ and }a(g)\not=0;$$
    \item \label{pro:dangerous-points-discontinuity:4} there exists $a\in \algalg G$ that is discontinuous at $g$.
\end{enumerate} 
In other words,
    $$D=\bigcup_{a\in \algalg G} D(a),$$
    where $D(a)$ denotes the set of discontinuity points of $a$. Moreover, given $g\in D$, we may always choose $a\in \algalg G$ that is compactly supported and continuous on an open bisection of $G$ and such that $g\in D(a)$.  
\end{proposition}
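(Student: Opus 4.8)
The plan is to establish the cycle $(i)\Rightarrow(iii)\Rightarrow(iv)\Rightarrow(i)$ together with the separate equivalence $(i)\Leftrightarrow(ii)$; the final ``Moreover'' will come out of a single-bisection variant of the construction used for $(i)\Rightarrow(iii)$.

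The technical heart is $(iv)\Rightarrow(i)$, which I would prove in contrapositive form: assuming $g\notin D$, I show that every $a\in\algalg G$ is continuous at $g$. Since continuity at a point is preserved under finite sums, by \cref{notation:cont functions as sums} it suffices to treat a single summand $a=f\,v_s$ with $s$ an open bisection and $f\in\contc(\rg(s))$. If $g\in s$ then $s$ is an open neighbourhood of $g$, so any net $g_n\to g$ is eventually in $s$ and $a(g_n)=f(\rg(g_n))\to f(\rg(g))=a(g)$ by continuity of $f$ and $\rg$. If $g\notin s$ then $a(g)=0$, and I argue by contradiction: were $a$ discontinuous at $g$, there would be a net $g_n\to g$ and $\varepsilon>0$ with $|a(g_n)|\ge\varepsilon$, forcing $g_n\in s$ and $\rg(g_n)\in\supp(f)$, a compact subset of $\rg(s)$. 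Since $X$ is Hausdorff, $\supp(f)$ is closed, so passing $\rg(g_n)\to\rg(g)$ to the limit gives $\rg(g)\in\supp(f)\subseteq\rg(s)$; putting $h\coloneqq(\rg|_s)^{-1}(\rg(g))\in s$ and using that $\rg|_s$ is a homeomorphism yields $g_n\to h$. As $h\in s$ while $g\notin s$, we have $h\ne g$, so $g_n\to g,h$ witnesses $g\in D$, a contradiction. I expect this to be the main obstacle, precisely because compact subsets of the non-Hausdorff space $G$ need not be closed; the device that rescues the argument is to transport everything down to the Hausdorff unit space $X$ along $\rg$, where $\supp(f)$ is genuinely closed.

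For $(i)\Rightarrow(iii)$ I would exploit the dangerous structure directly. Given a net $g_i\to g,h$ with $h\ne g$, \cref{lemma:dangerous arrows:convergence} gives $\rg(g)=\rg(h)$; choosing open bisections $t\ni g$ and $s\ni h$, the bisection property forces $g\notin s$ and $h\notin t$ (otherwise $\rg(g)=\rg(h)$ would force $g=h$). On the common open set $O\coloneqq\rg(s)\cap\rg(t)\ni\rg(g)$ I form the open sub-bisections $t_O\coloneqq t\cap\rg^{-1}(O)$ and $s_O\coloneqq s\cap\rg^{-1}(O)$ and pick $\phi\in\contc(O)$ with $\phi(\rg(g))=1$. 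Then $a\coloneqq\phi\,v_{t_O}-\phi\,v_{s_O}\in\algalg G$ satisfies $a(g)=\phi(\rg(g))=1$ (since $g\in t_O\setminus s_O$), whereas $g_i$ is eventually in $s_O\cap t_O$, where the two terms cancel and $a(g_i)=0$. This is precisely condition (iii), and $(iii)\Rightarrow(iv)$ is immediate, since $a(g_n)=0\not\to a(g)\ne 0$ exhibits a discontinuity of $a$ at $g$.

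It remains to handle $(i)\Leftrightarrow(ii)$ and the ``Moreover''. For $(i)\Rightarrow(ii)$ I would take a compact neighbourhood bisection $K$ of the witness $h$ contained in $s$: since $g_i\to h$ the net eventually enters the interior of $K$, and $g_i\to g$ then gives $g\in\overline K$, while $g\notin K$ because $\rg|_K$ is injective and $h\in K$ has $\rg(h)=\rg(g)$. Conversely, for $(ii)\Rightarrow(i)$, a net $k_n\in K$ with $k_n\to g$ has $\rg(k_n)\to\rg(g)$ and $\rg(k_n)\in\rg(K)$, which is compact, hence closed in $X$, so $\rg(g)\in\rg(K)$; as $\rg|_K\colon K\to\rg(K)$ is a continuous bijection from a compact space onto a Hausdorff one it is a homeomorphism, and $h\coloneqq(\rg|_K)^{-1}(\rg(g))\in K$ then satisfies $k_n\to h\ne g$, so $g\in D$. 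Finally, the ``Moreover'' follows from a single-bisection version of the earlier construction: with $s\ni h$ and $f\in\contc(\rg(s))$ chosen so that $f(\rg(h))=1$, the function $a=f\,v_s$ is continuous and compactly supported on the open bisection $s$, vanishes at $g\notin s$, yet $a(g_i)=f(\rg(g_i))\to f(\rg(h))=1$; hence $a$ is discontinuous at $g$, i.e.\ $g\in D(a)$, as required.
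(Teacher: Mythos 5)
Your proposal is correct, and it reorganizes the hardest implication in a genuinely different way from the paper. For (iv) $\Rightarrow$ (i) the paper argues directly: it writes $a=a_1+\dots+a_k$, performs several subnet extractions to trap the net inside $\supp(a_i)\sbe s_i$, and then uses compactness of the closure of $\supp(a_i)$ relative to $s_i$ to extract a subnet converging to some $h\in s_i$ with $h\neq g$. You prove the contrapositive instead: if $g\notin D$ then each single-bisection summand $fv_s$ is continuous at $g$ (finite sums of functions continuous at a point being continuous there), and in the only nontrivial case $g\notin s$ you manufacture the second limit explicitly as $h=(\rg|_s)^{-1}(\rg(g))$ after pushing the net down to the Hausdorff unit space, where the compact set $\supp(f)$ is genuinely closed. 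This trades the paper's subnet bookkeeping for the homeomorphism $\rg|_s\colon s\to\rg(s)$ and makes the role of Hausdorffness of $X$ transparent. What the paper's direct route buys is that the witness $a_i$ for the ``Moreover'' clause falls out of the same argument; you recover that clause by a separate, and in fact cleaner, construction ($a=fv_s$ with $f(\rg(h))=1$, so $a(g)=0$ while $a(g_i)\to 1$). Your (i) $\Rightarrow$ (iii) and (i) $\Rightarrow$ (ii) coincide with the paper's up to replacing $\s$ by $\rg$; your intersection trick $O=\rg(s)\cap\rg(t)$ with $\phi\in\contc(O)$ even handles the compact-support requirement for the summand $\phi v_{s_O}$ more carefully than the paper's choice of a function supported only in $\s(s)$.

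Two small points. In (ii) $\Rightarrow$ (i) you invoke injectivity of $\rg|_K$, i.e.\ you read (ii) as asserting that $K$ is a compact \emph{bisection}; this is a legitimate reading of the parenthetical, and your cycle closes because your (i) $\Rightarrow$ (ii) produces such a $K$, but the paper proves the stronger statement that \emph{any} compact $K$ with $g\in\overline K\backslash K$ witnesses $g\in D$, by the one-line argument that a net in $K$ converging to $g$ has, by compactness, a subnet converging to some $h\in K$, and $g\notin K$ forces $h\neq g$; if you want (ii) in this weaker form, substitute that argument for your homeomorphism one. Second, in your (i) $\Rightarrow$ (iii) the equality $a(g_i)=0$ holds only eventually, so one should pass to the corresponding tail of the net, exactly as the paper also does; this is cosmetic.
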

\begin{proof}
    \cref{pro:dangerous-points-discontinuity:1} $\Rightarrow$ \cref{pro:dangerous-points-discontinuity:2}. If $g\in D$, take a net $(g_n)_n \sbe G$ with $g_n\to g,h$ for some $g\not=h\in G$. Since $G$ is locally compact and \'etale, there exists a compact neighborhood bisection $K$ containing $h$. Since $\s(g)=\s(h)$ and $K$ is a bisection, we must have $g\notin K$. As $g_n\to h$, we must eventually have $(g_n)_n \sbe K$. It follows that $g\in \overline K \backslash K$.

    \cref{pro:dangerous-points-discontinuity:2} $\Rightarrow$ \cref{pro:dangerous-points-discontinuity:1}.
    If $g\in \overline K\backslash K$, then there is a net $(g_n)_n \sbe K$ with $g_n\to g$.
    Since $K$ is compact, after passing to a subnet, we may assume that this net converges to some $h\in K$, and since $g\notin K$, we have $g\not=h$, so that $g\in D$.

    \cref{pro:dangerous-points-discontinuity:1} $\Rightarrow$ \cref{pro:dangerous-points-discontinuity:3}. Suppose $g\in D$, and take a net $(g_n)_n \sbe G$ and $h\not= g$ with $g_n\to g,h$. In particular, we have $\s(g)=\s(h)$ and $\rg(g)=\rg(h)$ (cf.\ \cref{lemma:dangerous arrows:convergence}). Let $s,t \subseteq G$ be open bisections with $g\in s$ and $h\in t$. Then $g\notin t$ and $h\notin s$ (since both \(s\) and \(t\) are bisections). After passing to a subnet if necessary, we may assume that $(g_n)_n \sbe s\cap t$. Pick $b\in \contc(X)$ with $\supp(b)\sbe s^*s=\s(s)$ and $b(\s(g))=1$, and consider the function $a \coloneqq bv_s -bv_t$ (recall \cref{notation:cont functions as sums}). Then we have $a(g)=b(\s(g))=1$ as $g\notin t$, and $a(g_n)=b(\s(g_n))-b(\s(g_n))=0$ for all $n$, since $(g_n)_n \sbe s\cap t$.

    \cref{pro:dangerous-points-discontinuity:3} $\Rightarrow$ \cref{pro:dangerous-points-discontinuity:4}. This is obvious as the net in \cref{pro:dangerous-points-discontinuity:3} witnesses the discontinuity of $a$ at the point $g$.

    \cref{pro:dangerous-points-discontinuity:4} $\Rightarrow$ \cref{pro:dangerous-points-discontinuity:1}. Suppose $a\in \algalg G$ and $g\in D(a)$. Then there exists a net $(g_n)_n\sbe G$ with $g_n\to g$ but no subnet of $(a(g_n))_n$ converges to $a(g)$. Write $a=a_1+a_2+\ldots+a_k$, with $a_i$ continuous and supported on an open bisection $s_i$, \emph{i.e.}\ $a_i\in \contc(s_i)$. Since we have a finite number of indices $i\in \{1,\ldots,k\}$, after passing to a subnet if necessary, we may assume that there exists some fixed $i\in \{1,\ldots,k\}$ such that no subnet of $(a_i(g_n))_n$ converges to $a_i(g)$. Moreover, after passing to a subnet again, we may assume that $(g_n)_n \sbe \supp(a_i)$. Indeed, otherwise there is a subnet of $(g_n)_n$ with $a_i(g_n)=0$ for all $n$. In this case, we must have $a_i(g)\not=0$, as otherwise we would have a subnet of $(g_n)_n$ with $a(g_n)=0\to 0=a_i(g)$. Therefore we may assume $(g_n)_n\sbe \supp(a_i)\sbe s_i$. As $a_i$ is continuous on $s_i$, we have $g\notin s_i$. Now, the closure of $\supp(a_i)$ relative to $s_i$ is a compact subset $K\sbe s_i$, so there exists some subnet $(h_m)_m$ of $(g_n)_n$ converging to some $h\in K$, and since $a_i$ is continuous on $s_i\supseteq K$, we have $a_i(h_m)\to a_i(h)$. But $a_i(h_m)\not\to a_i(g)$, so that $g\not= h$ and therefore $g\in D$. At the same time, $g\in D(a_i)$ with $a_i$ continuous and compactly supported on the open bisection $s_i$. This gives the final assertion in the statement.
\end{proof}    

\begin{corollary}\label{cor:restriction-map}
    If $a\in \algalg G$, then $a$ is continuous on every arrow $g\in G\backslash D$, that is, we have a well-defined linear map $\algalg G\to \contb(G\backslash D)$, $a\mapsto a|_{G\backslash D}$.
\end{corollary}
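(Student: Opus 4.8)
The plan is to obtain this as an immediate consequence of \cref{pro:dangerous-points-discontinuity}, reading the corollary off from the equivalence between an arrow being dangerous and being a discontinuity point of some element of $\algalg G$.

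First I would invoke the identity $D=\bigcup_{a\in\algalg G}D(a)$ established in \cref{pro:dangerous-points-discontinuity} (equivalently, the implication \cref{pro:dangerous-points-discontinuity:4} $\Rightarrow$ \cref{pro:dangerous-points-discontinuity:1}). Taking complements, this says precisely that if $g\in G\setminus D$ then $g\notin D(a)$ for \emph{every} $a\in\algalg G$; that is, each $a\in\algalg G$ is continuous at every point of $G\setminus D$. Consequently, for a fixed $a\in\algalg G$, the restriction $a|_{G\setminus D}$ is a continuous function on the subspace $G\setminus D$.

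Next I would verify boundedness, so that $a|_{G\setminus D}$ genuinely lands in $\contb(G\setminus D)$ rather than merely among the continuous functions. Writing $a=a_1v_{s_1}+\dots+a_kv_{s_k}$ as in \cref{notation:cont functions as sums}, each $a_i\in\contz(s_is_i^*)$ is continuous and compactly supported, hence bounded, so $\sup_{g\in G}|a(g)|\le\sum_{i=1}^k\norm{a_i}_\infty<\infty$; thus $a$ is bounded on $G$, and a fortiori on $G\setminus D$. Finally, linearity of $a\mapsto a|_{G\setminus D}$ is clear, since restriction of functions is linear, which yields the asserted well-defined linear map $\algalg G\to\contb(G\setminus D)$.

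I do not anticipate any genuine obstacle: the entire force of the statement is already contained in \cref{pro:dangerous-points-discontinuity}, and the only additional remark needed is the (routine) boundedness of elements of $\algalg G$, which follows from their compact-support structure (cf.\ \cref{lemma:supp is compact}).
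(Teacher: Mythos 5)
Your proposal is correct and follows exactly the route the paper intends: the corollary is stated without proof precisely because it is an immediate consequence of \cref{pro:dangerous-points-discontinuity} (via $D=\bigcup_{a\in\algalg G}D(a)$), and your additional verification of boundedness and linearity is the routine part that the paper leaves implicit.
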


\begin{corollary} \label{lemma:support is in d}
    Let \(G\) be an \'etale groupoid with locally compact Hausdorff unit space \(X \subseteq G\).
    If $a\in \algalg G$ vanishes in a dense set, then $\supp(a)\sbe D$. More generally, let \(K \subseteq G\) be given, and let \(C \subseteq K\) be dense (in \(K\)) and \(a \in \algalg{G}\) be such that \(a(c) = 0\) for all \(c \in C\). Then \(\supp(a|_K) \subseteq D \cap K\).
\end{corollary}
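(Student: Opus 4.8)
The plan is to reduce everything to the continuity of elements of \(\algalg G\) away from the dangerous set, which is exactly the content of \cref{cor:restriction-map} (and ultimately of \cref{pro:dangerous-points-discontinuity}). I would prove the more general statement first and then recover the first assertion as the special case \(K = G\) with \(C\) dense in \(G\).

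So fix \(K \sbe G\), a dense subset \(C \sbe K\), and \(a \in \algalg G\) with \(a(c) = 0\) for all \(c \in C\). Since \(a|_K(g) = a(g)\) for \(g \in K\), the inclusion \(\supp(a|_K) \sbe D \cap K\) amounts to the implication: if \(g \in K\) and \(a(g) \neq 0\), then \(g \in D\). I would argue this by contraposition. Suppose \(g \in K \setminus D\). By \cref{cor:restriction-map}, \(a\) is continuous at \(g\). As \(C\) is dense in \(K\) in the subspace topology, there is a net \((c_i)_i \sbe C\) with \(c_i \to g\) inside \(K\); since the subspace topology is induced from \(G\), this net also converges to \(g\) in \(G\). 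Continuity of \(a\) at \(g\) then gives \(a(c_i) \to a(g)\), and as \(a(c_i) = 0\) for every \(i\) we conclude \(a(g) = 0\). Hence any \(g \in K\) with \(a(g) \neq 0\) must be dangerous, proving \(\supp(a|_K) \sbe D \cap K\). Taking \(K = G\) and \(C\) dense in \(G\) yields \(\supp(a) \sbe D\), which is the first claim.

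There is essentially no obstacle left once \cref{cor:restriction-map} is available: this corollary is the exact non-Hausdorff analogue of the classical fact that a continuous function vanishing on a dense set vanishes identically, here localized to \(K\) and restricted to non-dangerous arrows. The only points requiring a moment's care are purely topological bookkeeping — that convergence of a net in the subspace topology of \(K\) coincides with convergence in \(G\), and that ``continuous at \(g\)'' must be read via nets rather than sequences, since \(G\) need not be first countable. Neither step uses Hausdorffness of \(G\); indeed, the whole phenomenon being exploited is precisely the failure of limits to be unique, which is quarantined inside \(D\). The genuinely substantive work — showing that \(a\) really is continuous at each non-dangerous arrow despite not being globally continuous — has already been carried out in \cref{pro:dangerous-points-discontinuity}, so the corollary is a short application of it.
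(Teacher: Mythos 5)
Your proposal is correct and takes essentially the same approach as the paper: the paper's proof picks \(g \in \supp(a|_K)\), uses density of \(C\) to produce a net \((g_n) \sbe C\) with \(g_n \to g\) and \(a(g_n) = 0\), and concludes \(g \in D\) directly from \cref{pro:dangerous-points-discontinuity}, whereas you run the contrapositive through the continuity-at-non-dangerous-arrows formulation (\cref{cor:restriction-map}) of that same proposition. The two arguments are the same one-step application of \cref{pro:dangerous-points-discontinuity} plus the identical density-net construction, differing only in which of its equivalent items is invoked.
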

\begin{proof}
Take $g\in \supp(a|_K)$ and a net $(g_n)\sbe C$ with $g_n\to g$ and $a(g_n)=0$.
By \cref{pro:dangerous-points-discontinuity} we must have $g\in D$. Therefore $\supp(a|_K)\sbe D\cap K$.
\end{proof}

\begin{remark} \label{rem:support is in d is sharp}
    \cref{lemma:support is in d} is sharp, in the sense that \(\supp(a|_K)\) may actually coincide with \(D \cap K\). Indeed, this already happens with very simple groupoids and very large sets \(K\). For instance, let \(G\) be the groupoid of germs of the trivial action of the inverse semigroup \(S \coloneqq \Z_2 \sqcup \{0\}\) on \([0,1]\), where the domain of \(0 \in S\) is defined to be \([0, 1)\). This groupoid can be described by
    \[
        G = \left[0, 1\right) \sqcup \left\{1\right\} \times \Z_2,
    \]
    with the obvious groupoid structure and the unique topology that restricts to the usual topology on \([0,1]\) and such that \(x_k \to (1, \gamma) \in G\) for all \(\gamma \in \Z_2\) and net \((x_k)_{k} \subseteq [0,1)\) converging to \(1\) (in the usual sense). With this structure, $G$ is an étale compact groupoid, in particular we can take $K\coloneqq G$.
    Letting \(\Z_2 = \{e, \gamma\}\), where \(e = \gamma^2\), the function
    \[
        a \coloneqq 1_{[0, 1]} v_{e} - 1_{[0, 1]} v_{\gamma}
    \]
    clearly is contained in \(\algalg{G}\). Moreover, \(a(x) = 1 - 1 = 0\) for all \(x \in [0,1) \subseteq G\). Lastly, \(a([1, e]) = 1\), while \(a([1, \gamma]) = -1\). Therefore $\supp(a)=D=D\cap K$.
\end{remark}

Some immediate consequences of \cref{lemma:support is in d} are the following.
\begin{corollary} \label{cor:meager supp is in d}
    Let \(G\) be an \'etale groupoid with locally compact Hausdorff unit space \(X \subseteq G\). The following assertions hold.
    \begin{enumerate}[label=(\roman*)]
        \item If \(a \in \algalg{G}\) is such that \(\supp(a|_X)\) is meager in $X$, \emph{i.e.}\ \(E_\L(a) = 0\), then \(\supp(a|_X) \subseteq D\cap X\).
        \item If \(a \in \algalg{G}\) is such that \(\supp(a)\) is meager, then \(\supp(a) \subseteq D\).
    \end{enumerate}
\end{corollary}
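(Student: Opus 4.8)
The plan is to read off both statements from \cref{lemma:support is in d}: in each case I exhibit a dense set on which $a$ vanishes and then invoke that lemma. The one external fact I use is that in a Baire space a meager set has dense complement (otherwise its interior would be a non-empty open, hence non-meager, subset contained in a meager set, a contradiction).

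For \cref{cor:meager supp is in d}~(i), note first that the two formulations of the hypothesis, ``$\supp(a|_X)$ meager'' and ``$E_\L(a)=0$'', are literally the same statement, since $E_\L=\pi_{\rm ess}\circ P$ and $\meager(X)$ is by definition the ideal of Borel functions vanishing off a meager set; so there is nothing to check for the ``i.e.''. Now $X$ is locally compact Hausdorff, hence Baire, so the set $C\coloneqq\{x\in X: a(x)=0\}=X\setminus\supp(a|_X)$ is dense in $X$. Applying \cref{lemma:support is in d} with $K=X$ and this $C$ immediately gives $\supp(a|_X)\subseteq D\cap X$.

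For \cref{cor:meager supp is in d}~(ii) the same idea works locally. Fix $g\in\supp(a)$; since $G$ is \'etale there is an open bisection $s$ with $g\in s$, and $s$ is homeomorphic to the open subset $\s(s)\subseteq X$, so $s$ is locally compact Hausdorff and therefore Baire. Because $s$ is open in $G$, intersecting a nowhere-dense subset of $G$ with $s$ yields a nowhere-dense subset of $s$; hence the meager set $\supp(a)$ meets $s$ in a meager subset of $s$, and so $C\coloneqq\{h\in s: a(h)=0\}$ is dense in $s$. Applying \cref{lemma:support is in d} with $K=s$ and this $C$ gives $\supp(a|_s)\subseteq D\cap s$, and since $g\in\supp(a|_s)$ we conclude $g\in D$. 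As $g$ was an arbitrary point of $\supp(a)$, this proves $\supp(a)\subseteq D$.

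The step that needs the most care is the single place where non-Hausdorffness intervenes, namely passing from ``$\supp(a)$ meager in $G$'' to ``$\supp(a)\cap s$ meager in $s$'': this amounts to checking that intersecting a nowhere-dense (and hence, by countable unions, meager) subset of $G$ with the \emph{open} set $s$ preserves nowhere-density in the subspace topology, which is exactly where openness of $s$ and Hausdorffness of $s$ are used. Working bisection-by-bisection is precisely what lets me avoid having to assert that the non-Hausdorff space $G$ is itself Baire. Alternatively, one could first establish that $G$ is globally a Baire space---Baireness being a local property for spaces admitting an open cover by Baire subspaces, and $G$ being covered by open bisections each homeomorphic to an open subset of $X$---and then run the argument of (i) verbatim with $K=G$; I would mention this as a slicker but less elementary route.
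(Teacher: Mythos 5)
Your proof is correct and follows the same basic strategy as the paper's: Baire category gives density of the vanishing set, and then \cref{lemma:support is in d} finishes the job. Part (i) coincides with the paper's argument. In part (ii), however, you diverge in a way worth noting. The paper disposes of both assertions in one line by asserting that $G$ and $X$ are ``locally compact and regular'', so that the Baire category theorem applies to $G$ itself. For non-Hausdorff $G$ this assertion is actually problematic: a locally Hausdorff space is $T_1$, and a $T_1$ regular space is Hausdorff, so a non-Hausdorff \'etale groupoid is never regular. The conclusion that co-meager subsets of $G$ are dense is nevertheless true, but it needs precisely the kind of justification you supply: either your bisection-by-bisection localization (using that nowhere-density passes to open subspaces, and that each open bisection is homeomorphic to an open subset of the locally compact Hausdorff space $X$, hence Baire), or the slicker route you mention, namely that a space covered by open Baire subspaces is Baire. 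So your argument not only reaches the same conclusion by essentially the same route, it also patches a small inaccuracy in the paper's own one-line justification.
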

\begin{proof}
    The proof of both assertions follow the same ideas. As \(G\) and $X$ are locally compact and regular, the Baire Category Theorem implies that any co-meager set is dense. The claims then follow from an application of \cref{lemma:support is in d}.
\end{proof}

The following lemma starts the trend of considering groupoids that are ``small'' enough, \emph{i.e.}\ only those that can be covered by countably many open bisections.
\begin{lemma}\label{lem:D-meager}
    Let \(G\) be an \'etale groupoid with locally compact Hausdorff unit space. If \(G\) can be covered by countably many open bisections, then the set $D\sbe G$ of dangerous arrows is a meager set, that is, a countable union of nowhere dense sets.
\end{lemma}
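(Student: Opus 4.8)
The plan is to use the countable cover of $G$ by open bisections to write $D$ as a subset of a countable union of frontiers of open sets, each of which is automatically nowhere dense. Fix a cover $G=\bigcup_n s_n$ by open bisections. For each pair of indices $(m,n)$ the set $s_m\cap s_n\subseteq G$ is open, so its frontier
\[
    \mathrm{Fr}(s_m\cap s_n)\coloneqq \overline{s_m\cap s_n}\setminus(s_m\cap s_n)
\]
is closed with empty interior, hence nowhere dense in $G$ (this is the standard fact that the boundary of an open set is nowhere dense, valid in any topological space). The goal is therefore to prove the containment
\[
    D\subseteq \bigcup_{m,n}\mathrm{Fr}(s_m\cap s_n),
\]
after which meagerness of $D$ follows at once, the index set being countable.

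To establish this containment I would argue as follows. Take a dangerous arrow $g\in D$ together with a net $(g_i)_i$ and some $h\neq g$ with $g_i\to g,h$. By \cref{lemma:dangerous arrows:convergence} we have $\s(g)=\s(h)$. Choose indices $m,n$ from the cover with $g\in s_m$ and $h\in s_n$. The key observation is that $g\notin s_n$: since $s_n$ is a bisection, the source map is injective on $s_n$, so if both $g$ and $h$ lay in $s_n$ with $\s(g)=\s(h)$ they would coincide, contradicting $g\neq h$. Now, because $s_m$ and $s_n$ are open and $g_i\to g\in s_m$ while $g_i\to h\in s_n$, the net is eventually contained in $s_m\cap s_n$; hence $g\in\overline{s_m\cap s_n}$, whereas $g\notin s_m\cap s_n$ precisely because $g\notin s_n$. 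This places $g$ in $\mathrm{Fr}(s_m\cap s_n)$, as required.

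The remaining verifications are routine: that the frontier of an open set is nowhere dense, and that a countable union of nowhere dense sets is by definition meager. I do not expect a genuine obstacle; the one point that genuinely requires the hypothesis is the \emph{localization} step, namely that the witness $h$ for the non-Hausdorffness at $g$ can always be captured inside a bisection $s_n$ of the fixed countable cover. It is this that replaces the a priori uncountable family of ``bad pairs'' of bisections (implicit in the characterization $D=\bigcup_{a}D(a)$ of \cref{pro:dangerous-points-discontinuity}) by a countable one. The only care needed is the net argument giving eventual membership in $s_m\cap s_n$ and the bisection argument giving $g\notin s_n$, both of which are elementary.
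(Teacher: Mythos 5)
Your proof is correct and is essentially the paper's argument: the paper deduces \(D \subseteq \bigcup_n \overline{s_n}\setminus s_n\) from \cref{pro:dangerous-points-discontinuity}, whose proof of the implication (i)\(\Rightarrow\)(ii) is exactly your localization step — capture the witness \(h\) in a bisection \(s_n\) of the countable cover, use injectivity of \(\s\) on bisections together with \(\s(g)=\s(h)\) to get \(g \notin s_n\), and use the convergent net to get \(g \in \overline{s_n}\). Your passage to the frontiers of the intersections \(s_m \cap s_n\) is a harmless redundancy: once \(g \in \overline{s_n}\setminus s_n\) you already lie in a countable union of closed nowhere dense sets, so the bisection \(s_m\) containing \(g\) plays no role.
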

\begin{proof}
    Suppose that \(\{s_n\}_{n \in \N}\) are countably many open bisections that cover \(G\). Using \cref{pro:dangerous-points-discontinuity}, we get
    \[
        D \subseteq \bigcup_{n = 1}^\infty \overline{s_n} \setminus s_n
    \]
    which is, by construction, a countable union of closed nowhere dense sets, \emph{i.e.}\ a meager set. Subsets of meager sets are also meager, and thus so is \(D\).
\end{proof}

Observe that the left hand side in the following is the ideal \(\NN_{E_\L} \cap \algalg{G}\), by which we will quotient \(\algalg{G}\) to create \(\essalgalg{G} \subseteq \essalg{G}\) (cf.\ \cref{def:ess-algalg}).
\begin{corollary} [cf.\ \cite{bkmk-2024-twited-grpds}*{Proposition 4.5}] \label{cor:nucleus is in d}
    Let \(G\) be an \'etale groupoid with locally compact Hausdorff unit space.
    If the set $D$ of dangerous arrows is meager, then
    \begin{align*}
        \NN_{E_\L} \cap \algalg{G}&=\left\{a \in \algalg{G} \mid E_\L\left(a^*a\right) = 0\right\} \\ &= \left\{ a \in \algalg{G} \mid \supp\left(a^*a|_X\right) \subseteq D\cap X \right\}
        \\&=\left\{ a \in \algalg{G} \mid \supp\left(a\right) \subseteq D \right\}.
    \end{align*}
    In particular, the above holds when \(G\) can be covered by countably many open bisections, and this, in turn, is the case if $G$ is \(\sigma\)-compact or second countable. Moreover, if $G=\cup_{n=1}^\infty s_n$ is a cover of $G$ by countably many open bisection $s_n\sbe G$, then
    $$\NN_{E_\L} \cap \algalg{G}=\left\{ a \in \algalg{G} \mid \supp\left(a\right) \subseteq \tilde D \right\},$$
    where $\tilde D \coloneqq \cup_{n=1}^\infty \overline s_n\backslash s_n$.
\end{corollary}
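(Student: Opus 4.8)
The plan is to prove the displayed chain of three equalities first, and then read off the ``in particular'' and ``moreover'' clauses as consequences. The first equality $\NN_{E_\L}\cap\algalg G=\{a\in\algalg G: E_\L(a^*a)=0\}$ is just the definition of the null space $\NN_{E_\L}$ intersected with $\algalg G$ (recall $a^*a\in\algalg G$ since $\algalg G$ is a $*$-algebra), so nothing is needed there.

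For the second equality I would unfold $E_\L=\pi_{\rm ess}\circ P$ and $P(b)=b|_X$, so that $E_\L(a^*a)=0$ is equivalent to $a^*a|_X\in\meager(X)$, i.e.\ to $\supp(a^*a|_X)$ being meager. The inclusion $\{E_\L(a^*a)=0\}\sbe\{\supp(a^*a|_X)\sbe D\cap X\}$ is then exactly \cref{cor:meager supp is in d}\,(i) applied to $a^*a$, and needs no extra hypothesis. The reverse inclusion is where the standing assumption that $D$ is meager enters: if $\supp(a^*a|_X)\sbe D\cap X$ then it is a subset of a meager set, hence meager, whence $E_\L(a^*a)=0$.

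For the third equality the one genuinely new input is the elementary convolution formula $(a^*a)(x)=\sum_{g\in Gx}|a(g)|^2$ for $x\in X$, which shows that $x\in\supp(a^*a|_X)$ exactly when some $g\in Gx$ lies in $\supp(a)$; that is, $\supp(a^*a|_X)=\s(\supp(a))$. Feeding this into the condition $\supp(a^*a|_X)\sbe D\cap X$ turns it into the requirement that $\s(g)\in D$ for every $g\in\supp(a)$, and by the last assertion of \cref{lemma:dangerous arrows} (namely $g\in D\iff\s(g)\in D$, since $D$ is an ideal subgroupoid) this is equivalent to $\supp(a)\sbe D$. This is the step I expect to be the most delicate, as it is the only place where the groupoid structure, rather than pure point-set topology, is used; but given \cref{lemma:dangerous arrows} it reduces to the convolution formula, the genuine obstacle having already been overcome in \cref{pro:dangerous-points-discontinuity} and \cref{cor:meager supp is in d}.

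For the ``in particular'' clause I would invoke \cref{lem:D-meager}: a cover of $G$ by countably many open bisections forces $D$ to be meager, so the chain of equalities applies; and a $\sigma$-compact or second countable $G$ admits such a cover (using \cref{lemma:compact set is covered by fin many bisections} in the $\sigma$-compact case, and the Lindel\"of property together with a basis of open bisections in the second countable case). Finally, for the ``moreover'' clause, fix a countable bisection cover $G=\cup_n s_n$ and recall from the proof of \cref{lem:D-meager} that $D\sbe\tilde D:=\cup_n\overline{s_n}\setminus s_n$. This at once gives $\{a:\supp(a)\sbe D\}\sbe\{a:\supp(a)\sbe\tilde D\}$. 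Conversely, each $\overline{s_n}\setminus s_n$ is closed with empty interior, so $\tilde D$ is meager; hence $\supp(a)\sbe\tilde D$ makes $\supp(a)$ meager, and \cref{cor:meager supp is in d}\,(ii) yields $\supp(a)\sbe D$. Combining the two inclusions with the already proven chain gives $\NN_{E_\L}\cap\algalg G=\{a\in\algalg G:\supp(a)\sbe\tilde D\}$, as claimed.
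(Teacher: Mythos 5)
Your proposal is correct and follows essentially the same route as the paper's proof: the forward inclusions via \cref{cor:meager supp is in d}, the reverse inclusions from meagerness of $D$ (resp.\ $\tilde D$), the identity $(a^*a)(x)=\sum_{g\in Gx}|a(g)|^2$ combined with the fact that $g\in D\iff\s(g)\in D$ from \cref{lemma:dangerous arrows} for the third equality, and \cref{lem:D-meager} together with $D\sbe\tilde D$ for the two final clauses. No gaps.
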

\begin{proof}
    The inclusion \(\subseteq\) follows from \cref{cor:meager supp is in d}.
    For the reverse inclusion, we use \cref{lem:D-meager}, which implies that $D$, and hence also $D\cap X$ is meager.
    Hence, if \(\supp(a^*a|_X) \subseteq D\cap X\) then \(E_\L(a^*a) = 0\), as desired.

    Notice that for $x\in X \coloneqq G^{(0)}$, we have
    $$(a^*a)(x)=\sum_{g\in Gx} |a(g)|^2.$$
    Since $g\in D$ if and only if $\s(g)\in D\cap X$ (cf.\ \cref{lemma:dangerous arrows}) it follows that $\supp(a^*a|_X)\sbe D\cap X$ if and only if $\supp(a)\sbe D$.

    The ``in particular'' statement holds by \cref{lem:D-meager} and because any \(\sigma\)-compact groupoid can be covered by countably many open bisections. The same is true for a second countable \'etale groupoid. 

    Finally, to prove the last assertion of the statement, recall from the proof of \cref{lem:D-meager} that $D\sbe \tilde D$. Hence, if $a\in \algalg G$ and $\supp(a)\sbe D$, then also $\supp(a)\sbe \tilde D$. On the other hand, if $\supp(a)\sbe \tilde D$, then $\supp(a)$ is meager as $\tilde D$ is meager, and therefore $\supp(a)\sbe D$ by \cref{cor:meager supp is in d}.
\end{proof}

\begin{remark} \label{rem:covered by cnt bisec}
    The condition that \(G\) be covered by countably many open bisections in \cref{cor:nucleus is in d} is both quite mild and necessary.
    \begin{itemize}
        \item It is quite mild since any second countable groupoid satisfies it, \emph{i.e.}\ any quotient of any transformation groupoid will satisfy it as long as the acting group is countable. Moreover, some non-second countable groupoids such as \(\beta \Gamma \rtimes \Gamma\) (where \(\Gamma\) is any countable discrete group) are covered by countably many bisections (namely \((\beta \Gamma \times \{\gamma\})_{\gamma \in \Gamma}\)). Lastly, any locally compact Hausdorff space \(X \eqqcolon G\) that is \emph{not} \(\sigma\)-compact also satisfies it, as it is covered by \(1\) bisection, namely \(X\) itself. More generally, a groupoid $G$ admits a countable cover by bisections if and only if it can be written as a groupoid of germs $G\cong X\rtimes S$, for a quasi-countable inverse semigroup (cf.\ \cite{chung-martinez-szakacs-2022}) $S$ acting on $X$. This covers, for instance, all groupoids associated to uniform Roe algebras of uniformly locally finite metric spaces.
        \item It is also necessary since otherwise \(D \subseteq G\) may well not be meager. In fact, if \(G\) can only be covered by uncountably many bisections then it could happen that \(D = G\) (see \cite{KwasniewskiMeyer-essential-cross-2021}*{Example 7.16}).
    \end{itemize}
\end{remark}

\subsection{Essential representations and the maximal essential algebra} \label{subsec:ess-reps}
The following is one of the central notions we investigate in the paper.
\begin{definition} \label{def:ess-rep}
Let \(G\) be an \'etale groupoid with Hausdorff unit space. We say a representation \(\pi \colon \fullalg{G} \to \B(\H)\) is \emph{essential} if \(\pi(a) = 0\) for every \(a \in \algalg{G} \subseteq \textbf{}\fullalg{G}\) such that \(E_\L(a^*a) = 0\).
\end{definition}

We refer the reader to \cref{sec:class of ess reps} for a description of a (rather natural) class of essential representations (in the sense of \cref{def:ess-rep}). The following is a semantic, and yet relevant, remark.
\begin{remark}
    In the literature, the naming \emph{essential} representation usually means a representation  \(\pi \colon A \to \B(\H)\) of a \cstar algebra $A$ whose image does not contain any compact operator -- see \cite{Brown-Ozawa:Approximations}*{Definition~1.7.4}. These are not to be confused with the notion in \cref{def:ess-rep}. The naming \emph{essential} in \cref{def:ess-rep} comes from the fact that these have to do with the essential \cstar algebra \(\essalg{G}\).
\end{remark}

\begin{remark}\label{rem:essential-rep-cntble-bis}
    In the case when \(G\) is covered by countably many open bisections $s_1, s_2, s_3,\ldots$ (which is quite often, cf.\ \cref{rem:covered by cnt bisec}), it follows from \cref{cor:nucleus is in d} that a representation \(\pi\) is essential in the sense of \cref{def:ess-rep} if and only if \(\pi(a) = 0\) for all \(a \in \algalg{G}\) with \(\supp(a^*a) \subseteq D\cap X\), or equivalently, $\supp(a)\sbe D$, or also $\supp(a)\sbe \tilde D \coloneqq \cup_n \overline s_n\backslash s_n$.
\end{remark}

With the help of \cref{def:ess-rep} we may now define the following.
\begin{definition} \label{def:max ess alg}
    Let \(G\) be an \'etale groupoid with Hausdorff unit space \(X \subseteq G\). The \emph{maximal essential \cstar algebra of \(G\)} is the universal \cstar algebra of \(\algalg{G}\) with respect to essential representations, that is, it is the (necessarily unique) \cstar algebra \(\essmaxalg{G}\) densely containing a quotient of \(\algalg{G}\) and such that any essential representation \(\pi \colon \algalg{G} \to \B(\H)\) induces a continuous map \(\pi \colon \essmaxalg{G} \to \B(\H)\). More precisely, $\essmaxalg G$ is the Hausdorff completion of $\algalg G$ with respect to the \cstar{}seminorm
    $$\|a\|_{\ess,\max}:=\sup\{\|\pi(a)\|: \pi\mbox{ is an essential representation of }\algalg G\}.$$
\end{definition}

The following basic proposition just shows that \(\essmaxalg{G}\) coincides with the usual \(\fullalg{G}\) whenever \(G\) happens to be Hausdorff (cf.\ \cref{lemma:hausdorff}).
\begin{proposition}
    If \(G\) is Hausdorff then every representation \(\pi \colon \algalg{G} \to \B(\H)\) is essential (in the sense of \cref{def:ess-rep}), and therefore the identity map \(\id \colon \algalg{G} \to \algalg{G}\) extends to an isomorphism \(\fullalg{G} \cong \essmaxalg{G}\).
\end{proposition}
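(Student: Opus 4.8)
The plan is to reduce the statement to the single observation that, when $G$ is Hausdorff, the only $a\in\algalg{G}$ with $E_\L(a^*a)=0$ is $a=0$. Granting this, every representation automatically kills every such $a$ (trivially, since $a$ itself vanishes), which is exactly the defining condition of \cref{def:ess-rep}, so every representation is essential; the isomorphism $\fullalg{G}\cong\essmaxalg{G}$ then follows by comparing the two universal seminorms.

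First I would record that a Hausdorff $G$ has no dangerous arrows: by \cref{lemma:dangerous arrows}~\cref{lemma:dangerous arrows:hausdorff}, the set $D$ is empty precisely when $G$ is Hausdorff. The empty set is (trivially) nowhere dense, hence meager, so \cref{cor:nucleus is in d} applies and yields
\[
    \left\{a\in\algalg{G}\mid E_\L(a^*a)=0\right\}
    =\left\{a\in\algalg{G}\mid \supp(a)\subseteq D\right\}
    =\left\{a\in\algalg{G}\mid \supp(a)=\emptyset\right\}
    =\{0\}.
\]
(If one prefers to avoid \cref{cor:nucleus is in d}, the same conclusion follows directly: by \cref{lemma:hausdorff} every $a^*a$ is continuous and $P(a^*a)=(a^*a)|_X\in\contz(X)$; the condition $E_\L(a^*a)=0$ says this continuous function has meager support, but its support is open, and a nonempty open subset of the locally compact Hausdorff, hence Baire, space $X$ cannot be meager, so $(a^*a)(x)=\sum_{g\in Gx}|a(g)|^2=0$ for all $x$, forcing $a=0$.)

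Consequently, for any representation $\pi\colon\algalg{G}\to\B(\H)$ and any $a$ with $E_\L(a^*a)=0$, we have $a=0$ and therefore $\pi(a)=0$; this is precisely the requirement in \cref{def:ess-rep}, so every representation of $\algalg{G}$ is essential. Finally I would compare the defining seminorms: since the class of essential representations now coincides with the class of all representations, the seminorm $\|a\|_{\ess,\max}$ of \cref{def:max ess alg} equals the largest \cstar{}seminorm $\|a\|_{\max}$ defining $\fullalg{G}$. Taking Hausdorff completions of $\algalg{G}$ with respect to these identical seminorms shows that $\id\colon\algalg{G}\to\algalg{G}$ extends to the desired isomorphism $\fullalg{G}\cong\essmaxalg{G}$. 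There is no genuine obstacle here; the only points requiring care are verifying that $\emptyset$ counts as meager so that \cref{cor:nucleus is in d} is applicable, and checking that the two universal seminorms are built from the same class of representations.
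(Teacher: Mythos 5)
Your proof is correct and takes essentially the same route as the paper, which also observes that Hausdorffness forces $D=\emptyset$ and then invokes \cref{cor:nucleus is in d} (or \cref{cor:meager supp is in d}) to conclude that the only $a\in\algalg{G}$ with $E_\L(a^*a)=0$ is $a=0$, so every representation is trivially essential and the two universal seminorms coincide. Your extra Baire-category argument and the explicit seminorm comparison are fine elaborations of steps the paper leaves implicit.
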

\begin{proof}
    This follows from \cref{cor:meager supp is in d} or \cref{cor:nucleus is in d}. Indeed, if \(G\) is Hausdorff then \(D\) is empty and therefore the only element \(a \in \algalg{G}\) such that \(\supp(a^*a) \subseteq D\) is \(a = 0\), which trivially gets mapped to \(0\) under any representation.
\end{proof}

As usual, it is (a priori) not clear whether \(\essmaxalg{G}\) is non-trivial. Nevertheless, this will follow since it projects onto \(\essalg{G}\) (cf.\ \cref{cor:diagram}).
Furthermore, we briefly observe in passing that, unless \(G\) is Hausdorff (see \cref{lemma:hausdorff}), \(\essmaxalg{G}\) needs only densely contain a proper \emph{quotient} of \(\algalg{G}\).
Indeed, let \(\Lambda^{\ess} \colon \fullalg{G} \to \essalg{G}\) be the usual quotient map (cf.\ \cref{cor:diagram:basic}). Then \(\essmaxalg{G}\) will densely contain \(\Lambda^{\ess}(\algalg{G})\). For this reason we give the following.
\begin{definition} \label{def:ess-algalg}
    We let \(\essalgalg{G} \coloneqq \Lambda^\ess(\algalg{G})\).
\end{definition}

Using \cref{cor:meager supp is in d} we may give several useful characterizations of the algebra \(\essalgalg{G}\).
\begin{lemma} \label{lemma:ess-alg-alg}
    Let \(G\) be an \'etale groupoid with Hausdorff unit space. If the set \(D\sbe G\) of dangerous arrows is meager (e.g.\ if $G$ is covered by countably many bisections), then
    \[
      \essalgalg{G} \cong \algalg{G}/\NN_{E_\L}^c \cong \algalg{G}/J_{\sing}^c,
    \]
    where $\NN_{E_\L}^c\coloneqq\NN_{E_\L} \cap \algalg{G}$ and \(J_{\sing}^c \subseteq \algalg{G}\) is the ideal of functions such that \(a^*a\) is supported on \(D\); and \(\NN_{E_\L}\) is the ideal of elements \(a \in \fullalg{G}\) such that \(E_\L(a^*a) = 0\).
\end{lemma}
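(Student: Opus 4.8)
The plan is to derive both isomorphisms by identifying the relevant kernels and applying the first isomorphism theorem for $*$-algebras. First I would pin down the left-hand isomorphism $\essalgalg{G}\cong\algalg{G}/\NN_{E_\L}^c$. By construction (see \cref{cor:diagram:basic}) the algebra $\essalg{G}$ carries a \emph{faithful} generalized expectation $E_\L$ that factors the one on $\fullalg{G}$ through the quotient map $\Lambda^\ess\colon\fullalg{G}\onto\essalg{G}$. For $a\in\fullalg{G}$ the $C^*$-identity gives $\Lambda^\ess(a)=0$ if and only if $\Lambda^\ess(a^*a)=0$, and faithfulness of the induced expectation (which is positive and satisfies $\overline{E_\L}(\Lambda^\ess(a^*a))=E_\L(a^*a)$) gives $\Lambda^\ess(a^*a)=0$ if and only if $E_\L(a^*a)=0$. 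Hence $\ker(\Lambda^\ess)=\NN_{E_\L}$, and restricting $\Lambda^\ess$ to the dense subalgebra $\algalg{G}$ yields a surjection onto $\essalgalg{G}=\Lambda^\ess(\algalg{G})$ with kernel $\NN_{E_\L}\cap\algalg{G}=\NN_{E_\L}^c$, so that $\essalgalg{G}\cong\algalg{G}/\NN_{E_\L}^c$.

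For the right-hand isomorphism it suffices to prove the equality of ideals $\NN_{E_\L}^c=J_{\sing}^c$ inside $\algalg{G}$. Here I would lean on \cref{cor:nucleus is in d}, which, under the standing hypothesis that $D$ is meager, already identifies $\NN_{E_\L}^c=\{a\in\algalg{G}\mid\supp(a)\subseteq D\}$. Thus the whole task reduces to showing
\[
\{a\in\algalg{G}\mid\supp(a)\subseteq D\}=\{a\in\algalg{G}\mid\supp(a^*a)\subseteq D\}=J_{\sing}^c.
\]
I would prove this by two inclusions. For $\subseteq$, suppose $\supp(a)\subseteq D$; if $(a^*a)(g)\neq 0$ then $g=hk$ for some composable $h,k$ with $\overline{a(h^{-1})}\,a(k)\neq 0$, so $h^{-1},k\in\supp(a)\subseteq D$, and since $D$ is an ideal subgroupoid (\cref{lemma:dangerous arrows}) it is closed under inversion and absorbs composable arrows, whence $h\in D$ and then $g=hk\in D$; thus $\supp(a^*a)\subseteq D$. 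For $\supseteq$, if $\supp(a^*a)\subseteq D$ then in particular $\supp(a^*a|_X)\subseteq D\cap X$, and applying \cref{cor:nucleus is in d} once more returns $\supp(a)\subseteq D$. This establishes $\NN_{E_\L}^c=J_{\sing}^c$ (which incidentally confirms that $J_{\sing}^c$ really is an ideal), and combining with the first isomorphism completes the proof.

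The convolution computation and the bookkeeping of the quotient maps I expect to be harmless. The one step warranting care—and essentially the only genuine input beyond the first isomorphism theorem—is the reverse inclusion $\supp(a^*a)\subseteq D\Rightarrow\supp(a)\subseteq D$: this is \emph{not} visibly true arrow-by-arrow on $G$, and it is precisely here that I must restrict to the unit space and exploit the identity $(a^*a)(x)=\sum_{g\in Gx}|a(g)|^2$ together with the equivalence $g\in D\Leftrightarrow\s(g)\in D$ (both already packaged inside \cref{cor:nucleus is in d}), rather than attempt a direct argument on all of $G$.
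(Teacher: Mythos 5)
Your proof is correct and follows exactly the route the paper intends: the paper states \cref{lemma:ess-alg-alg} without proof as an immediate consequence of \cref{cor:nucleus is in d}, together with faithfulness of \(E_\L\) on \(\essalg{G}\) (the same kernel identification as in \cref{prop: generalized cond exp well defined algs}), which are precisely your two ingredients. Your extra step showing \(\supp(a)\subseteq D \Rightarrow \supp(a^*a)\subseteq D\) via the ideal-subgroupoid property of \(D\) (\cref{lemma:dangerous arrows}) is the right way to fill in the one detail the paper leaves implicit in identifying \(J_{\sing}^c\).
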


It is clear that $\essmaxalg G$ is a quotient of $\fullalg G$. The following result provides a more explicit description of this quotient.

\begin{proposition}\label{prop:description-ess-max-algebra-quotient}
    We have $\essmaxalg G = \fullalg G / \NN_{E_\L}^\max$, where $\NN_{E_\L}^\max$ denotes the closure of $\NN_{E_\L}^c$ in $\fullalg G$, that is, the closure with respect to the maximal \cstar{}norm.
\end{proposition}

\begin{proof}
    We show that $\fullalg G / \NN_{E_\L}^\max$ satisfies the desired universal property for essential representations of $\algalg G$. 
    
    Let $\pi \colon \algalg G \to \B(\Hilb)$ be an essential representation. By definition, we have $\pi(\NN_{E_\L}) = 0$. Let us also denote by $\pi$ the (unique) extension to a representation $\pi \colon \fullalg G \to \B(\Hilb)$. 
    
    Since $\pi$ is contractive, it is continuous with respect to the maximal \cstar{}norm. Therefore, it vanishes on the closure $\NN_{E_\L}^\max$, that is, $\pi(\NN_{E_\L}^\max) = 0$. It follows that $\pi$ factors through the quotient $\fullalg G / \NN_{E_\L}^\max$, which establishes the claim.
\end{proof}

For the results that follow, we need to recall the definition of the regular representation.

\begin{definition} \label{def:lambda x}
    Let \(G\) be an \'etale groupoid with Hausdorff unit space \(X \subseteq G\). Given \(x \in X \subseteq G\), we let
    \[
        \lambda_x \colon \algalg{G} \to \B\left(\ell^2\left(Gx\right)\right), \;\; \lambda_x\left(a\right)\left(\delta_h\right) \coloneqq \sum_{\s(g)=\rg(h)} a\left(g\right) \delta_{gh}.
    \]
\end{definition}

Straightforward computations shows that each \(\lambda_x\) in \cref{def:lambda x} is a representation of $\algalg G$, so that it extends to a representation $\lambda_x\colon \fullalg G\to \B(\ell^2(Gx))$ (still denoted the same way by abuse of notation).
The following result is well-known, hence we omit the details. Indeed, usually the representation $\Lambda$ obtained as the direct sum of the $\lambda_x$ above for $x\in X$ is used to \emph{define} $\redalg{G}$.

\begin{proposition}\label{prop:rep red and ess calgs:red}
 The representation
        \[
            \Lambda\coloneqq\bigoplus_{x \in X} \lambda_x \colon \fullalg{G} \to \B\left(\bigoplus_{x \in X} \ell^2\left(Gx\right)\right)
        \]
        induces a faithful representation of \(\redalg{G} \hookrightarrow \B(\oplus_{x \in X} \ell^2(Gx))\).    
\end{proposition}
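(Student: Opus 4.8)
The plan is to realize the norm-closed image $B \coloneqq \Lambda(\fullalg{G})$ as a completion of $\algalg{G}$ to which the weak expectation $P$ extends \emph{faithfully}, and then to identify $B$ with $\redalg{G}$ by the uniqueness pinned down in \cref{prop: generalized cond exp well defined algs}. First I would record the soft points: each $\lambda_x$ extends to a representation of $\fullalg{G}$ (as noted just before the statement), so $\Lambda = \bigoplus_{x} \lambda_x$ is a $*$-homomorphism and its image $B$ is a $\Cst$-subalgebra of $\B\!\left(\bigoplus_{x} \ell^2(Gx)\right)$ in which $\Lambda(\algalg{G})$ is dense.

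Next I would build the expectation by compressing with the ``diagonal'' vectors $\delta_x$, $x\in X$. Setting $Q(b)(x) \coloneqq \langle b\,\delta_x, \delta_x\rangle$ defines a unital completely positive map on all of $\B\!\left(\bigoplus_x \ell^2(Gx)\right)$. Using \cref{def:lambda x}, for $a \in \algalg{G}$ one has $\lambda_x(a)\delta_x = \sum_{g \in Gx} a(g)\delta_g$, so only the term $g=x$ contributes to the coefficient of $\delta_x$, giving
\[
    Q(\Lambda(a))(x) = \langle \lambda_x(a)\delta_x, \delta_x\rangle = a(x) = P(a)(x).
\]
Since every $b \in B$ is a norm limit of elements $\Lambda(a_n)$ and $\lvert Q(b)(x) - P(a_n)(x)\rvert \le \norm{b - \Lambda(a_n)}$, the function $Q(b)$ is a uniform limit of elements of $\borelb(X)$, hence again lies in $\borelb(X)$. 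Thus $Q \colon B \to \borelb(X)$ is a positive map extending $P$.

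The hard part will be the faithfulness of $Q$. Since $Q(b^*b)(x) = \norm{b\,\delta_x}^2$, the hypothesis $Q(b^*b)=0$ is equivalent to $b\,\delta_x = 0$ for every unit $x \in X$; the difficulty is that the $\delta_x$ with $x \in X$ do \emph{not} span the Hilbert space, so this vanishing must be propagated to all basis vectors $\delta_h$, $h \in G$. To do so I would introduce, for each arrow $h$, the right-translation partial isometry $R_h$ sending $\delta_k \mapsto \delta_{kh}$ for $k \in G\rg(h)$ and zero on the other summands. A short computation shows $\lambda_{\s(h)}(a)\,R_h = R_h\,\lambda_{\rg(h)}(a)$ for all $a \in \algalg{G}$, so $R_h$ commutes with $\Lambda(a)$ and, by density, with every element of $B$. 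As $R_h\,\delta_{\rg(h)} = \delta_h$, for $b \in B$ with $b\,\delta_x = 0$ for all $x \in X$ we get
\[
    b\,\delta_h = b\,R_h\,\delta_{\rg(h)} = R_h\,b\,\delta_{\rg(h)} = 0 \qquad (h \in G).
\]
Since $\{\delta_h : h \in G\}$ is an orthonormal basis of $\bigoplus_x \ell^2(Gx)$, this forces $b = 0$, so $Q$ is faithful.

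Finally, having exhibited $B$ as a completion of $\algalg{G}$ on which $P$ extends to a faithful generalized conditional expectation $Q$ onto $\borelb(X)$, I would invoke \cref{prop: generalized cond exp well defined algs} (with $A = \fullalg{G}$, $A_0 = \contz(X)$, and $E = P$), which identifies $B$ with $\redalg{G}$ via an isomorphism intertwining the expectations; this is exactly the content asserting that $\Lambda$ induces the faithful representation $\redalg{G} \hookrightarrow \B\!\left(\bigoplus_{x \in X}\ell^2(Gx)\right)$. The only genuinely delicate step is the intertwining identity for the right translations $R_h$ together with the observation that the orbit of the diagonal vectors $\delta_x$ under these commuting operators exhausts an orthonormal basis; everything else is a routine verification.
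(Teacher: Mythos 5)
Your proposal is correct, and it supplies in full the argument the paper itself omits (the paper merely declares this proposition ``well-known'', since $\Lambda$ is usually taken as the \emph{definition} of $\redalg{G}$). Your route is the standard one and matches the technique the paper does spell out elsewhere: the diagonal compression $Q(b)(x)=\braket{\delta_x}{b\,\delta_x}$ is exactly the expectation used in \cref{prop:rep red and ess calgs:ess}, and your right-translation intertwiners $R_h$ are the same device (the unitaries $U_h$ with $U_h\lambda_{\s(h)}(a)=\lambda_{\rg(h)}(a)U_h$) used in \cref{rem:j-map-red} to prove injectivity of the $j$-map; the final identification via \cref{prop: generalized cond exp well defined algs} is precisely how the paper intends $\redalg{G}$ to be pinned down. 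One cosmetic remark: when invoking \cref{prop: generalized cond exp well defined algs} the expectation on $A=\fullalg{G}$ should be taken as $E\coloneqq Q\circ\Lambda$ (a bounded positive map extending $P$ from $\algalg{G}$), and one should note that any competing faithful expectation on a quotient of $\fullalg{G}$ agrees with it on the dense subalgebra $\algalg{G}$ and hence everywhere, by continuity of positive maps --- a one-line point your write-up glosses over but which is routine.
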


Next we recall the well-known $j$-map (see \cite{Renault80}*{II.4.2}) that allows us to view elements of $\redalg G$ as Borel bounded functions on $G$.
    
\begin{lemma} \label{rem:j-map-red}
Given an étale groupoid $G$ with locally compact Hausdorff unit space $X$, we define the \emph{$j$-map} as 
    $$j\colon \redalg{G}\to \ell^\infty(G),\quad j(a)(g)\coloneqq\braket{\delta_{g}}{\lambda_{\s(g)}(a)\delta_{\s(g)}},$$
    where we use the convention that inner products are linear in the second variable. Then $j$ is an injective linear contraction, that is, $\|j(a)\|_\infty\leq \|a\|_{\rm red}$, and it extends the inclusion map $\algalg G\into \redalg G$. The image of $j$ consists of bounded Borel measurable functions on $G$ that are continuous on $G\backslash D$, so that $j$ may be also viewed as a contractive linear embedding from $\redalg G$ into $\borelb(G)$, and also into $\contb(G\backslash D)\oplus\ell^\infty(D)\sbe \ell^\infty(G)$. 
\end{lemma}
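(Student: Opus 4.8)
The plan is to handle the four assertions in sequence, noting that the only substantial point is an intertwining identity for the matrix coefficients of $\Lambda$; everything else is formal, resting on the contraction estimate. First I would pin down $j$ on the dense subalgebra. Fix $g$ and set $x\coloneqq\s(g)$. The unit $x$ itself lies in $Gx$, and directly from \cref{def:lambda x} one has $\lambda_x(a)\delta_x=\sum_{k\in Gx}a(k)\delta_k$ (the sum over $\s(k)=\rg(x)=x$, using $kx=k$). Pairing against $\delta_g$ then extracts $a(g)$, so $j(a)=a$ for $a\in\algalg G$; this shows $j$ extends the inclusion $\algalg G\into\redalg G$. Linearity is immediate from linearity of each $\lambda_x$ and of the inner product in its second slot.

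Next I would prove the contraction. By Cauchy--Schwarz, $|j(a)(g)|=|\braket{\delta_g}{\lambda_{\s(g)}(a)\delta_{\s(g)}}|\le\|\lambda_{\s(g)}(a)\|$, and since $\Lambda=\bigoplus_x\lambda_x$ is faithful on $\redalg G$ by \cref{prop:rep red and ess calgs:red}, we have $\|\lambda_x(a)\|\le\|\Lambda(a)\|=\|a\|_{\red}$ for every $x$. Taking the supremum over $g$ gives $\|j(a)\|_\infty\le\|a\|_{\red}$, so $j$ is a well-defined linear contraction $\redalg G\to\ell^\infty(G)$.

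The heart of the argument, and the step I expect to be the main obstacle (because of the bookkeeping of composability and of the directions of the translation unitary), is the identity expressing \emph{all} matrix coefficients through the diagonal ones: for $h,k\in Gx$ and $y\coloneqq\rg(h)$,
\[
    \braket{\delta_k}{\lambda_x(a)\delta_h}=j(a)(kh^{-1}).
\]
To get this I would use the right-translation unitary $R_h\colon\ell^2(Gy)\to\ell^2(Gx)$, $R_h\delta_p\coloneqq\delta_{ph}$, which satisfies $R_h^*=R_{h^{-1}}$ and intertwines $\lambda_x(a)R_h=R_h\lambda_y(a)$; the intertwining is a one-line check on basis vectors using associativity $g(ph)=(gp)h$ and $\rg(ph)=\rg(p)$. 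Writing $\delta_h=R_h\delta_y$ and moving $R_h^*=R_{h^{-1}}$ across the inner product yields $\braket{\delta_k}{\lambda_x(a)\delta_h}=\braket{\delta_{kh^{-1}}}{\lambda_y(a)\delta_y}=j(a)(kh^{-1})$, since $\s(kh^{-1})=y$. Injectivity of $j$ then follows at once: if $j(a)=0$, every matrix coefficient of every $\lambda_x(a)$ vanishes, so $\Lambda(a)=0$ and hence $a=0$ by faithfulness.

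Finally, for the description of the image I would approximate. Given $a\in\redalg G$, choose $a_n\in\algalg G$ with $a_n\to a$ in norm; the contraction gives $j(a_n)=a_n\to j(a)$ \emph{uniformly} on $G$. Each $a_n$ is a bounded Borel function (a finite sum of functions continuous on bisections) which is continuous on $G\setminus D$ by \cref{cor:restriction-map}. Since a uniform limit of bounded Borel functions is Borel, and a uniform limit of functions continuous on $G\setminus D$ is continuous there, we conclude $j(a)\in\borelb(G)$ with $j(a)|_{G\setminus D}\in\contb(G\setminus D)$. Splitting $G=(G\setminus D)\sqcup D$ as sets identifies the subspace of bounded functions whose restriction to $G\setminus D$ is continuous with $\contb(G\setminus D)\oplus\ell^\infty(D)$, and the above places $j(a)$ precisely there. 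Combined with injectivity and the contraction, this exhibits $j$ as a contractive linear embedding of $\redalg G$ into each of $\borelb(G)$ and $\contb(G\setminus D)\oplus\ell^\infty(D)\subseteq\ell^\infty(G)$.
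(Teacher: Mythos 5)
Your proposal is correct and follows essentially the same route as the paper's proof: Cauchy--Schwarz for the contraction, translation unitaries intertwining the $\lambda_x$ to reduce all matrix coefficients to the diagonal ones (hence injectivity via faithfulness of $\Lambda$ from \cref{prop:rep red and ess calgs:red}), and uniform approximation by elements of $\algalg{G}$, continuous off $D$, for the image description. Your write-up merely makes explicit the identity $\braket{\delta_k}{\lambda_x(a)\delta_h}=j(a)(kh^{-1})$ and the extension-of-inclusion check, which the paper treats as standard.
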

\begin{proof}
From its formula, it is clear that $j$ is a linear contraction. The fact that $j$ is injective is a standard fact: if $j(a)=0$, then $\braket{\delta_g}{\lambda_{\s(g)}(a)\delta_{\s(g)}}=0$ for all $g\in G$. Using the unitaries $U_h\colon \ell^2(G\s(h))\to \ell^2(G\rg(h))$ given by $U_h(\delta_g)\coloneqq\delta_{gh^{-1}}$ and the relation $U_g\lambda_{\s(g)}(a)=\lambda_{\rg(g)}(a)U_g$, one gets from 
$$\braket{U_h(\delta_g)}{U_h\lambda_{\s(g)}(a)\delta_{\s(g)}}=\braket{\delta_g}{\lambda_{\s(g)}(a)\delta_{\s(g)}}=0$$
that $\lambda_x(a)=0$ for all $x\in X$, so that $a=0$.
        
        The image of $j$ is in the closure of $\algalg G$ within $\ell^\infty(G)$ with respect to the sup norm $\|\cdot\|_\infty$. In particular all functions in the image of $j$ are Borel bounded functions.
        To see that $j(a)$ is continuous on $G\backslash D$, write $a=\lim a_n$ for a net $(a_n)_n \sbe \algalg G$, where the limit is taken with respect to the reduced norm. Since $j$ is contractive, $j(a)=\lim j(a_n)$ with respect to $\|\cdot\|_\infty$. Since each $a_n$ is continuous on $G\backslash D$ by \cref{pro:dangerous-points-discontinuity}, the same holds for $j(a)$. 
\end{proof}

The following result is an extension of \cref{lemma:ess-alg-alg} to the reduced \cstar{}algebra.
An alternative proof of the following can also be found in \cite{neshveyevetal2023}*{Proposition 1.12}.
\begin{theorem}[cf.\ \cite{KwasniewskiMeyer-essential-cross-2021}*{Theorem 7.18}]\label{the:KM-ess-J-sing}
Given an étale groupoid $G$ with locally compact Hausdorff unit space $X$,
let $J_\sing\coloneqq\ker(\redalg G \twoheadrightarrow \essalg G)$. Then
$$
J_\sing= \{a\in \redalg G: \supp(j(a^*a)|_X)\mbox{ is meager in } X\},$$ 
and if $G$ is covered by countably many bisections, then 
\begin{align*}
J_\sing&= \{a\in \redalg G: \supp(j(a))\mbox{ is meager in } G\}
\\&=\{a\in \redalg G: \supp(j(a))\sbe D\}
\\&=\{a\in \redalg G: \supp(j(a^*a)|_X)\sbe X\cap D\}.    
\end{align*}
\end{theorem}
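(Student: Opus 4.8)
The plan is to first reduce the defining description of $J_\sing$ to the single condition $E_\L(a^*a)=0$, and then to translate this and the three set-theoretic conditions into one another using the continuity of $j$ off $D$ together with the Baire category theorem.

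First I would record that, because the induced generalized expectation $E_\L\colon\essalg G\to\borelb(X)/\meager(X)$ is \emph{faithful} (cf.\ \cref{cor:diagram:basic}), for every $a\in\redalg G$ one has $a\in J_\sing$ \iff $E_\L(a^*a)=0$. Indeed, writing $\Lambda^{\rm red}_{E_\L,P}\colon\redalg G\onto\essalg G$ for the quotient map, the relation $E_\L=E_\L\circ\Lambda^{\rm red}_{E_\L,P}$ together with faithfulness gives $a\in J_\sing\iff E_\L(\Lambda^{\rm red}_{E_\L,P}(a)^*\Lambda^{\rm red}_{E_\L,P}(a))=0\iff E_\L(a^*a)=0$, exactly as in the argument of \cref{prop: generalized cond exp well defined algs}. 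Since $E_\L=\pi_{\rm ess}\circ P$ and $P(a^*a)=(a^*a)|_X=j(a^*a)|_X$, the condition $E_\L(a^*a)=0$ means precisely that $j(a^*a)|_X$ lies in $\meager(X)$, i.e.\ that $\supp(j(a^*a)|_X)$ is meager in $X$. This already yields the first displayed equality, with no countability hypothesis.

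For the remaining equivalences (under the countable-cover assumption) I would exploit the pointwise identity
\[
j(a^*a)(x)=\braket{\delta_x}{\lambda_x(a)^*\lambda_x(a)\delta_x}=\norm{\lambda_x(a)\delta_x}^2=\sum_{g\in Gx}\abs{j(a)(g)}^2,\qquad x\in X,
\]
which shows that $\supp(j(a^*a)|_X)=\s(\supp(j(a)))$. Combined with \cref{lemma:dangerous arrows} (namely $g\in D\iff\s(g)\in D\cap X$), this immediately gives $\supp(j(a))\sbe D\iff\supp(j(a^*a)|_X)\sbe X\cap D$, so the two ``$\sbe D$'' descriptions agree. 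The core of the argument is then a Baire-category step, for which I invoke that $j(b)$ is continuous on $G\bs D$ for every $b\in\redalg G$ (cf.\ \cref{rem:j-map-red}) and that $D$ is meager once $G$ is covered by countably many open bisections (cf.\ \cref{lem:D-meager}). I would argue by contraposition: if $\supp(j(a))\not\sbe D$, pick $g\in G\bs D$ with $j(a)(g)\neq0$ and a small open bisection $W\ni g$; by continuity of $j(a)$ on $G\bs D$ the function $j(a)$ is nonzero throughout $W\bs D$, so $W\bs D\sbe\supp(j(a))$. If $\supp(j(a))$ were meager in $G$, then $W\bs D$ would be meager in the open set $W$, and since $W\cap D$ is meager in $W$ as well, $W=(W\bs D)\cup(W\cap D)$ would be meager in itself — impossible, as $W$ is homeomorphic to an open subset of the locally compact Hausdorff (hence Baire) space $X$. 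This gives $\supp(j(a))$ meager $\Rightarrow\supp(j(a))\sbe D$, the converse being immediate since $D$ is meager; that is, the two conditions (2) and (3) coincide. Running the identical reasoning with $j(a^*a)|_X$ on the Baire space $X$ shows that $\supp(j(a^*a)|_X)$ meager $\Rightarrow\supp(j(a^*a)|_X)\sbe X\cap D$, with the reverse implication again free, thereby reconnecting to the first equality. Chaining these equivalences yields all four descriptions of $J_\sing$.

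I expect the main obstacle to be the careful bookkeeping in this Baire-category step. One must pass from ``meager in $G$'' to ``meager in an open bisection $W$'' — which is legitimate, since the trace of a meager set on an open set is meager there — and crucially use that each $W$ is separately a Baire space, even though $G$ itself need not be Hausdorff; the global Baire theorem is \emph{not} available, so the argument must be localized to bisections. Everything else, namely the reduction to $E_\L(a^*a)=0$ and the identity $\supp(j(a^*a)|_X)=\s(\supp(j(a)))$, is essentially routine manipulation of the $j$-map together with \cref{lemma:dangerous arrows}.
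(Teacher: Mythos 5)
Your proof is correct. One structural point first: the paper itself does not prove this theorem at all — it imports it from Kwa\'sniewski--Meyer (Theorem~7.18 of their paper) and merely points to \cite{neshveyevetal2023}*{Proposition 1.12} for an alternative argument. So there is no ``paper proof'' to match; what you have produced is a self-contained derivation from the paper's own toolkit, which is genuinely valuable. Your route is the natural one: the unconditional first equality follows from faithfulness of \(E_\L\) on \(\essalg G\) together with \(E_\L\circ\Lambda^{\rm red}_{E_\L,P}=\pi_{\rm ess}\circ P\) and the realization \(P(a)=j(a)|_X\) (exactly the kernel computation of \cref{prop: generalized cond exp well defined algs}); the identity \(j(a^*a)(x)=\sum_{g\in Gx}|j(a)(g)|^2\) gives \(\supp(j(a^*a)|_X)=\s(\supp(j(a)))\), which together with \cref{lemma:dangerous arrows} identifies the two ``\(\sbe D\)'' conditions; and the Baire step, using continuity of \(j(a)\) off \(D\) (\cref{rem:j-map-red}) and meagerness of \(D\) (\cref{lem:D-meager}), closes the loop. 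In effect you are running the paper's own dense-subalgebra argument (\cref{cor:meager supp is in d}, \cref{cor:nucleus is in d}) at the level of the completed algebra \(\redalg G\), with the \(j\)-map supplying the continuity that elements of \(\algalg G\) have for free; your correct insistence on localizing Baire category to open bisections (since \(G\) itself need not be a Baire space) is precisely the point that makes this work in the non-Hausdorff setting.

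Two small precision remarks, neither a gap. First, the sentence ``by continuity of \(j(a)\) on \(G\bs D\) the function \(j(a)\) is nonzero throughout \(W\bs D\)'' is not literally a consequence of continuity for an arbitrary bisection \(W\ni g\): you must \emph{choose} \(W\) inside the open set \(V\) with \(|j(a)-j(a)(g)|<|j(a)(g)|\) on \(V\cap(G\bs D)\), which is what your word ``small'' is silently doing — spell this out. Second, the transfer ``meager in \(G\) \(\Leftrightarrow\) meager in the open set \(W\)'' for subsets of \(W\) deserves the one-line justification (interiors of relative closures in \(W\) are open in \(G\)), since you use it in both directions, including for \(W\cap D\).
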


    The above result has the interesting consequence that we may view elements of $\essalg G$ as continuous functions on $G\backslash D$.

\begin{corollary}\label{cor:j-ess}
Let $G$ be an étale groupoid with locally compact Hausdorff unit space $X$ and let $D\sbe G$ be the set of dangerous arrows. Assume $G$ is covered by countably many bisections. Then the restriction map $\algalg G\to \contb(G\backslash D)$ from \cref{cor:restriction-map} induces an injective and norm-contractive linear map $j_\ess\colon\essalg{G}\to \contb(G\backslash D)$, where $\contb(G\backslash D)$ is the Banach space of bounded continuous functions $G\backslash D\to \C$ endowed with the sup-norm.
\end{corollary}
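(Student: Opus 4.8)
The plan is to obtain $j_\ess$ by factoring the restriction of the $j$-map through the quotient $\essalg{G}=\redalg{G}/J_\sing$. Concretely, I would first compose the $j$-map $j\colon\redalg{G}\to\borelb(G)$ of \cref{rem:j-map-red} with the restriction $f\mapsto f|_{G\setminus D}$. Since \cref{rem:j-map-red} guarantees that every function in the image of $j$ is continuous on $G\setminus D$, this composite is a well-defined linear map
\[
    \redalg{G}\longrightarrow\contb(G\setminus D),\qquad a\longmapsto j(a)|_{G\setminus D},
\]
which on $\algalg{G}\subseteq\redalg{G}$ agrees with the restriction map of \cref{cor:restriction-map}, because $j$ extends the inclusion $\algalg{G}\hookrightarrow\redalg{G}$.

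The heart of the argument is the identification of the kernel. An element $a\in\redalg{G}$ lies in the kernel of $a\mapsto j(a)|_{G\setminus D}$ precisely when $j(a)$ vanishes off $D$, that is, when $\supp(j(a))\subseteq D$. Under the standing hypothesis that $G$ is covered by countably many bisections, \cref{the:KM-ess-J-sing} identifies this set with $J_\sing=\ker\bigl(\Lambda^{\rm red}_{E_\L,P}\colon\redalg{G}\to\essalg{G}\bigr)$. Hence the composite kills exactly $J_\sing$ and therefore descends to a linear map $j_\ess\colon\essalg{G}\to\contb(G\setminus D)$ satisfying $j_\ess\circ\Lambda^{\rm red}_{E_\L,P}(a)=j(a)|_{G\setminus D}$; injectivity is immediate from the kernel being \emph{exactly} $J_\sing$, and the compatibility with \cref{cor:restriction-map} on $\essalgalg{G}$ is inherited from the previous paragraph.

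For contractivity I would use that $\Lambda^{\rm red}_{E_\L,P}$ is a quotient $*$-homomorphism, so the essential norm is the quotient norm $\norm{b}_{\ess}=\inf\{\norm{a}_{\red}:\Lambda^{\rm red}_{E_\L,P}(a)=b\}$. For any representative $a$ of $b$ we then have $\norm{j_\ess(b)}_\infty=\norm{j(a)|_{G\setminus D}}_\infty\le\norm{j(a)}_\infty\le\norm{a}_{\red}$, where the first inequality holds because restriction can only shrink the supremum and the second is the contractivity of $j$ from \cref{rem:j-map-red}. Taking the infimum over all representatives yields $\norm{j_\ess(b)}_\infty\le\norm{b}_{\ess}$, as desired.

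The only genuinely non-formal ingredient is the kernel computation, and there the whole difficulty has already been absorbed into \cref{the:KM-ess-J-sing}: it is precisely the countable-bisection hypothesis that makes the pointwise condition $\supp(j(a))\subseteq D$ coincide with the meagerness condition defining $J_\sing$, so that restriction to $G\setminus D$ sees exactly the singular ideal. Everything else is a routine factorization and norm-estimate argument.
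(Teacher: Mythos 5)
Your proposal is correct and follows essentially the same route as the paper: compose the $j$-map of \cref{rem:j-map-red} with restriction to $G\backslash D$, identify the kernel with $J_\sing$ via \cref{the:KM-ess-J-sing}, and factor through the quotient $\essalg{G}\cong\redalg{G}/J_\sing$. Your explicit quotient-norm argument for contractivity is just a spelled-out version of what the paper leaves implicit when it says the map ``factors through an injective linear contraction.''
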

\begin{proof}
    Let $R\colon \ell^\infty(G)\to \ell^\infty(G\backslash D)$ be the restriction map; this is a contractive linear map. Composing it with $j\colon \redalg G\to \ell^\infty(G)$ from \cref{rem:j-map-red}, we get a linear contraction
    $$R\circ j\colon \redalg{G}\to \ell^\infty(G\backslash D)$$
    whose image lies within $\contb(G\backslash D)\sbe \ell^\infty(G\backslash D)$, again by \cref{rem:j-map-red}. Moreover, by \cref{the:KM-ess-J-sing}, $\ker(R\circ j)=J_\sing$, so that $R\circ j$ factors through an injective linear contraction $\essalg G\to \contb(G\backslash D)$, as desired.
\end{proof}

\begin{corollary}\label{cor:expectation-ess-cb}
Under the same assumptions as above, there exists a (unique) faithful generalized conditional expectation 
    $$E\colon \essalg G\to \contb(X\backslash D)$$ 
    induced by the restriction map $\algalg G\to \contb(X\backslash D)$.
\end{corollary}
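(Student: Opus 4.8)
The plan is to define $E$ as the composite of the embedding $j_\ess\colon\essalg G\to\contb(G\setminus D)$ from \cref{cor:j-ess} with the restriction map to the unit space. Since $X$ is open in $G$, the set $X\setminus D=(G\setminus D)\cap X$ is open in $G\setminus D$, so restricting a bounded continuous function on $G\setminus D$ to $X\setminus D$ yields an element of $\contb(X\setminus D)$, and this restriction is a linear contraction. I therefore set $E(a)\coloneqq j_\ess(a)|_{X\setminus D}$. On $\algalg G$ this is exactly $a\mapsto a|_{X\setminus D}=P(a)|_{X\setminus D}$ (well defined by \cref{cor:restriction-map}), so $E$ is a norm-continuous map agreeing with the restriction map on the dense subalgebra $\algalg G$; this forces existence and, by density, uniqueness of such a map.

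Next I would check that $E$ is a generalized conditional expectation. Complete positivity is automatic once positivity is known, because the target $\contb(X\setminus D)$ is commutative, and positivity is seen pointwise: given $a\in\essalg G$, lift it to some $c\in\redalg G$, so that $j_\ess(a^*a)=R\circ j(c^*c)$ in the notation of the proof of \cref{cor:j-ess}, and for each $x\in X\setminus D$ one computes $j(c^*c)(x)=\braket{\delta_x}{\lambda_x(c^*c)\delta_x}=\norm{\lambda_x(c)\delta_x}^2\geq 0$; hence $E(a^*a)\geq 0$. Since $D$ is meager (\cref{lem:D-meager}), the set $X\setminus D$ is dense in $X$, so restriction embeds $\contz(X)\hookrightarrow\contb(X\setminus D)$ isometrically, and $E$ restricts on $\contz(X)\subseteq\essalg G$ to this inclusion. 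Finally $E$ is a $\contz(X)$-bimodule map because $P$ is, i.e.\ $(f*a*g)|_X=f|_X\cdot a|_X\cdot g|_X$ for $f,g\in\contz(X)$ and $a\in\algalg G$; this identity survives restriction to $X\setminus D$ and then extends to all of $\essalg G$ by density and continuity. Together these properties make $E$ a generalized conditional expectation.

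The crux is faithfulness, and this is exactly where the standing hypothesis that $G$ be covered by countably many bisections is used. Suppose $E(a^*a)=0$ for some $a\in\essalg G$, and lift $a$ to $c\in\redalg G$. Recalling that $P(c^*c)=j(c^*c)|_X$ on $\redalg G$, the bounded Borel function $P(c^*c)$ agrees on $X\setminus D$ with $j_\ess(a^*a)|_{X\setminus D}=E(a^*a)=0$, hence is supported in $X\cap D$. By \cref{lem:D-meager} the set $D$ is meager, so $X\cap D$ is meager in $X$ and therefore $P(c^*c)\in\meager(X)$, which is to say $E_\L(a^*a)=\pi_{\rm ess}(P(c^*c))=0$. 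Faithfulness of the essential expectation $E_\L\colon\essalg G\to\borelb(X)/\meager(X)$ (cf.\ \cref{cor:diagram:basic}) then gives $a=0$. I expect this final step---upgrading ``vanishing on the comeager set $X\setminus D$'' to ``lying in the meager ideal $\meager(X)$''---to be the only genuinely delicate point, and it depends essentially on the meagerness of $D$, hence on the countable-cover assumption; everything else is a routine transport of the already-established properties of $P$, $j$ and $E_\L$ through the restriction.
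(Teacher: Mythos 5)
Your proposal is correct and takes essentially the same route as the paper: both realize the expectation by composing the $j$-map picture of the reduced expectation (\(P(c)=j(c)|_X\)) with restriction to \(X\setminus D\), and both identify its kernel with \(J_\sing\) via meagerness of \(D\) under the countable-cover hypothesis. The only cosmetic difference is that you re-derive that kernel identification from faithfulness of \(E_\L\) together with meagerness of \(D\cap X\), where the paper instead cites \cref{the:KM-ess-J-sing}, and you spell out the positivity and bimodule checks that the paper leaves implicit.
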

\begin{proof}
    Consider the standard generalized conditional expectation $P\colon \redalg G\to \borelb(X)\sbe \ell^\infty (X)$. Using the $j$-map, this can be more concretely realized as $P(a)=j(a)|_X$. Notice that by \cref{rem:j-map-red}, $P(a)$ is continuous on $X\backslash D$. And by \cref{the:KM-ess-J-sing}, $a\in J_\sing$ if and only if $\supp(P(a^*a))\sbe D$. This means that the composition $R\circ P$ of $P$ with the restriction map $R\colon \ell^\infty(X)\to \ell^\infty(X\backslash D)$ is a generalized conditional expectation with left kernel $\NN_{R\circ P}=J_\sing$, from which the result follows. Notice that $\contz(X)$ embeds into $\contb(X\backslash D)$ because $D$ is meager and hence $X\backslash D$ is dense in $X$.
\end{proof}

\begin{remark}
    The above expectation $\essalg G\to \contb(X\backslash D)$ is basically the essential expectation $E_\L$ when viewed as a map from $\essalg G$. More precisely, recall that $E_\L$ is a map $\fullalg G\to \borelb(X)/\meager(X)$.
    Notice that both $\contb(X\backslash D)$ and $\borelb(X)/\meager(X)$ embed into $\ell^\infty(X)/\meager^\infty(X)$, where $\meager^\infty(X)$ is the ideal of functions $f\in \ell^\infty(X)$ with meager support.
    Using these embeddings, we may write that $E_\L(a)=E(\Lambda^\ess(a))=j_\ess(\Lambda^\ess(a))|_X$ for $a\in \fullalg G$, where $\Lambda^\ess\colon \fullalg{G}\to\essalg{G}$ denotes the quotient map.  
\end{remark}

\subsection{The essential C*-algebra as a reduced C*-algebra} \label{sec:ess-red}
In this subsection we present an alternative approach to \(\essalg{G}\) than that of \cref{def:grpd-c-algs}, as we now define \(\essalg{G}\) as a \emph{reduced} \cstar algebra in the sense that it will be concretely represented via an `essential regular representation'. This can be a more appropriate approach in certain cases, and we apply this later when studying essential amenability.

Recall from \cref{def:lambda x} and \cref{prop:rep red and ess calgs:red} the definition of the regular representation $\Lambda$.
The following result showcases the quotient map \(\Lambda^{\rm red}_{E_\L, P} \colon \redalg{G} \to \essalg{G}\) (cf.\ \cref{cor:diagram:basic}).
In fact, it is also proven in \cite{neshveyevetal2023}*{Proposition 1.12} that the same result below also applies when substituting the set \(D\) of dangerous arrows by the set of \emph{extremely dangerous} arrows. As a consequence, we see that the set of functions \(a \in \algalg{G}\) supported on either of these sets are the same.
\begin{proposition}[cf.\ \cite{neshveyevetal2023}*{Proposition 1.12} and \cite{bkmk-2024-twited-grpds}*{Proposition 4.16}] \label{prop:rep red and ess calgs:ess}
    Let \(G\) be an \'etale groupoid with Hausdorff unit space \(X \subseteq G\).
  If $G$ can be covered by countably many open bisections, the representation
        \[
            \Lambda^\ess\coloneqq\bigoplus_{x \in X \setminus D} \lambda_x \colon \fullalg{G} \to  \B\left(\bigoplus_{x \in X \setminus D}\ell^2\left(Gx\right)\right)
        \]
        induces a faithful representation of \(\essalg{G} \hookrightarrow \B(\oplus_{x \in X \setminus D} \ell^2(Gx))\).
\end{proposition}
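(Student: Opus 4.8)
The plan is to show that $\Lambda^\ess$ factors through the reduced \cstar algebra $\redalg G$ and that, viewed as a representation of $\redalg G$, its kernel is exactly the singular ideal $J_\sing=\ker(\redalg G\to\essalg G)$; since $\essalg G=\redalg G/J_\sing$, this immediately yields the desired faithful representation of $\essalg G$. First I would observe that $\Lambda^\ess=\bigoplus_{x\in X\setminus D}\lambda_x$ is a subrepresentation of the full regular representation $\Lambda=\bigoplus_{x\in X}\lambda_x$ from \cref{prop:rep red and ess calgs:red}; hence for $a\in\algalg G$ one has $\norm{\Lambda^\ess(a)}=\sup_{x\notin D}\norm{\lambda_x(a)}\le\norm{\Lambda(a)}=\norm{a}_\red$, so $\Lambda^\ess$ extends to a representation $\bar\Lambda^\ess$ of $\redalg G$. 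The task is then to compute $\ker\bar\Lambda^\ess$, i.e.\ the set of $a\in\redalg G$ with $\lambda_x(a)=0$ for all $x\in X\setminus D$.

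The computation rests on the identity $j(a^*a)(x)=\braket{\delta_x}{\lambda_x(a^*a)\delta_x}=\norm{\lambda_x(a)\delta_x}^2$ for $x\in X$, so that $\supp(j(a^*a)|_X)=\{x\in X:\lambda_x(a)\delta_x\ne0\}$. The inclusion $\ker\bar\Lambda^\ess\subseteq J_\sing$ is then immediate: if $\lambda_x(a)=0$ for all $x\notin D$, then $j(a^*a)(x)=0$ for all such $x$, whence $\supp(j(a^*a)|_X)\subseteq X\cap D$, and $a\in J_\sing$ by \cref{the:KM-ess-J-sing}.

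The reverse inclusion $J_\sing\subseteq\ker\bar\Lambda^\ess$ is the heart of the argument, and the main obstacle is upgrading ``$\lambda_x(a)$ kills $\delta_x$'' to ``$\lambda_x(a)$ kills all of $\ell^2(Gx)$''. Let $a\in J_\sing$ and fix $x\in X\setminus D$; by \cref{the:KM-ess-J-sing} we have $\supp(j(a^*a)|_X)\subseteq X\cap D$. For each $h\in Gx$ I would use the intertwining unitaries $U_h\colon\ell^2(G\s(h))\to\ell^2(G\rg(h))$ of \cref{rem:j-map-red}, which satisfy $U_h(\delta_h)=\delta_{\rg(h)}$ and $U_h\lambda_{\s(h)}(a)=\lambda_{\rg(h)}(a)U_h$, to write $\lambda_x(a)\delta_h=U_h^{-1}\lambda_{\rg(h)}(a)\delta_{\rg(h)}$. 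The key point is that since $D$ is an \emph{ideal subgroupoid} (\cref{lemma:dangerous arrows}), the condition $\s(h)=x\notin D$ forces $\rg(h)\notin D$; hence $j(a^*a)(\rg(h))=0$, that is $\lambda_{\rg(h)}(a)\delta_{\rg(h)}=0$, so $\lambda_x(a)\delta_h=0$. As $h\in Gx$ is arbitrary, $\lambda_x(a)=0$, and therefore $a\in\ker\bar\Lambda^\ess$.

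Combining both inclusions gives $\ker\bar\Lambda^\ess=J_\sing$, so $\bar\Lambda^\ess$ descends to a faithful representation of $\essalg G=\redalg G/J_\sing$ on $\bigoplus_{x\in X\setminus D}\ell^2(Gx)$, as claimed. The only place the countable-bisection hypothesis is used is through \cref{the:KM-ess-J-sing}, which supplies the pointwise description $J_\sing=\{a:\supp(j(a^*a)|_X)\subseteq X\cap D\}$ needed in both directions.
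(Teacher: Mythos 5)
Your proof is correct and follows essentially the same route as the paper's: both factor \(\Lambda^\ess\) through \(\redalg{G}\), identify the kernel of the induced map with \(J_\sing\) via the Kwa\'sniewski--Meyer characterization (\cref{the:KM-ess-J-sing}), and use the intertwining unitaries \(U_h\) together with the fact that \(D\) is an ideal subgroupoid (\cref{lemma:dangerous arrows}) to upgrade vanishing of \(\lambda_x(a)\delta_x\) for \(x\notin D\) to vanishing of \(\lambda_x(a)\) on all of \(\ell^2(Gx)\). The only difference is packaging: the paper phrases this through the faithful expectation \(Q(\Lambda^\ess(a))(x)=\braket{\delta_x}{\lambda_x(a)\delta_x}\), its identification with \(R\circ P\), and the uniqueness of quotients carrying faithful expectations (\cref{cor:expectation-ess-cb} and \cref{prop: generalized cond exp well defined algs}), whereas you verify the two kernel inclusions directly.
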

\begin{proof}
    Since $\Lambda^\ess$ is just a restriction of $\Lambda$, it is clear that $\Lambda^\ess(\fullalg G)$ is a quotient of $\redalg G=\Lambda(\fullalg G)$.
    Moreover, we have a canonical expectation $Q\colon \Lambda^\ess(\fullalg G)\to \ell^\infty(X\backslash D)$, given by
    $$Q(\Lambda^\ess(a))(x)\coloneqq\braket{\delta_x}{\lambda_x(a)\delta_x},\quad x\in X\backslash D.$$
    An argument as in the proof of \cref{rem:j-map-red} shows that $Q$ is faithful. Furthermore, notice that
    $$Q(\Lambda^\ess(a))=j(\Lambda(a))|_{X\backslash D}=(R\circ P)(\Lambda(a)),$$
    where $R\colon \ell^\infty(X)\to \ell^\infty(X\backslash D)$ denotes the restriction map. It follows from \cref{cor:expectation-ess-cb} that $\Lambda^\ess(\fullalg G)\cong \essalg G$, as desired.     
\end{proof}

\begin{corollary} \label{cor:diagram}
Let \(G\) be an \'etale groupoid with Hausdorff unit space \(X \subseteq G\). The identity map \(\id \colon \algalg{G} \to \algalg{G}\) induces quotient maps making
\begin{center}
\begin{tikzcd}[scale=50em]
\fullalg{G} \arrow[two heads]{d}{\Lambda^{\rm max}_{E_\mathcal{L},P}} \arrow[two heads]{rd}{\Lambda^{\rm ess}} \arrow[two heads]{r}{\Lambda_P} & \redalg{G} \arrow[two heads]{d}{\Lambda^{\rm red}_{E_\mathcal{L},P}} \arrow{r}{P} & \borelb\left(X\right) \arrow[two heads]{d}{\pi_\ess} \\
\essmaxalg{G} \arrow[two heads]{r}{\Lambda_{E_\mathcal{L}}} & \essalg{G} \arrow{r}{E_\L} & \borelb\left(X\right)/\meager\left(X\right)
\end{tikzcd}
\end{center}
commute. Moreover, the following assertions hold:
\begin{enumerate}[label=(\roman*)]
    \item \(P\) and \(E_\L\) are faithful.
    \item \(\Lambda_P\) can be taken to be \(\oplus_{x \in X} \lambda_x\), and \(\Lambda^{\rm ess}\) can be taken to be \(\oplus_{x \in X \setminus D} \lambda_x\) whenever \(D\sbe G\) is meager (e.g. $G$ can be covered by countably many open bisections).
\end{enumerate}
\end{corollary}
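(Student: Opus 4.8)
The plan is to enlarge the commutative square already furnished by \cref{cor:diagram:basic} by grafting in the two new homomorphisms $\Lambda^{\rm max}_{E_\mathcal L,P}\colon\fullalg G\onto\essmaxalg G$ and $\Lambda_{E_\mathcal L}\colon\essmaxalg G\onto\essalg G$. Since every map in the diagram is induced by the identity on $\algalg G$, commutativity of the enlarged diagram will be automatic once the two homomorphisms are shown to exist: the right-hand square and the faithfulness of $P$ and $E_\L$ are inherited verbatim from \cref{cor:diagram:basic} (this is assertion (i)), while for the left-hand triangle both $\Lambda_{E_\mathcal L}\circ\Lambda^{\rm max}_{E_\mathcal L,P}$ and the diagonal $\Lambda^{\rm ess}$ send $a\in\algalg G$ to its canonical image in $\essalg G$, so they agree on the dense subalgebra $\algalg G$ and hence everywhere.

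To build the vertical map I would argue straight from the seminorm of \cref{def:max ess alg}. As every essential representation is in particular a representation of $\algalg G$, the family defining $\|a\|_{\ess,\max}$ is a subfamily of the one defining $\|a\|_{\max}$, whence $\|a\|_{\ess,\max}\le\|a\|_{\max}$ for all $a\in\algalg G$. Thus the identity on $\algalg G$ is contractive from the maximal to the essential-maximal completion and extends to a $*$-homomorphism $\fullalg G\to\essmaxalg G$ with dense range; since the range of a $*$-homomorphism of \cstar{}algebras is closed, it is all of $\essmaxalg G$, giving the quotient $\Lambda^{\rm max}_{E_\mathcal L,P}$.

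The only genuinely new verification, and the point where I expect the (modest) crux to lie, is that the quotient $\Lambda^{\rm ess}\colon\fullalg G\onto\essalg G$ of \cref{cor:diagram:basic} is itself an \emph{essential} representation in the sense of \cref{def:ess-rep}. Here I would use that $E_\L$ is \emph{faithful} on $\essalg G$: if $a\in\algalg G$ satisfies $E_\L(a^*a)=0$, then the commutativity established in \cref{cor:diagram:basic} gives $E_\L\bigl(\Lambda^{\rm ess}(a)^*\Lambda^{\rm ess}(a)\bigr)=E_\L(\Lambda^{\rm ess}(a^*a))=0$, so faithfulness forces $\Lambda^{\rm ess}(a)=0$. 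Hence $\Lambda^{\rm ess}$ is essential, and the universal property of \cref{def:max ess alg} yields $\|a\|_{\ess}=\|\Lambda^{\rm ess}(a)\|\le\|a\|_{\ess,\max}$, which produces the quotient $\Lambda_{E_\mathcal L}\colon\essmaxalg G\onto\essalg G$ and closes the left triangle. I would stress that this argument uses no countability hypothesis, so the diagram and (i) hold in full generality.

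Assertion (ii) is then pure bookkeeping with the concrete models of the reduced and essential algebras: \cref{prop:rep red and ess calgs:red} identifies $\Lambda_P$ with $\bigoplus_{x\in X}\lambda_x$, while under the standing assumption that $D$ is meager, \cref{prop:rep red and ess calgs:ess} identifies the diagonal $\Lambda^{\rm ess}$ with $\bigoplus_{x\in X\setminus D}\lambda_x$. As an internal consistency check one sees directly that the latter is essential via \cref{cor:nucleus is in d}: if $\supp(a)\subseteq D$ and $x\notin D$, then for $h\in Gx$ any $g$ with $\s(g)=\rg(h)$ has $\rg(h)\notin D$ and hence $g\notin D$ by \cref{lemma:dangerous arrows}, so $\lambda_x(a)\delta_h=0$ and thus $\lambda_x(a)=0$. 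This completes the plan.
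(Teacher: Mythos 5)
Your proof is correct and follows the same route the paper leaves implicit: \cref{cor:diagram} is stated without proof precisely because it is meant to be assembled from \cref{cor:diagram:basic}, the universal property in \cref{def:max ess alg}, and \cref{prop:rep red and ess calgs:red,prop:rep red and ess calgs:ess}, which is exactly what you do. Your identification of the crux --- that \(\Lambda^{\rm ess}\) is an essential representation because the induced \(E_\L\) on \(\essalg{G}\) is faithful, so the universal property of \(\essmaxalg{G}\) produces \(\Lambda_{E_\L}\) with no countability hypothesis --- is precisely the step the paper relies on (it is also how the paper knows \(\essmaxalg{G}\) is nontrivial and surjects onto \(\essalg{G}\)).
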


We end the section with the following, which we hope are illuminating classes of examples.
\begin{example} \label{ex:collapse}
    Let $G$ be an \'{e}tale groupoid with locally compact Hausdorff unit space $X \subseteq G$.
    \begin{enumerate}[label=(\roman*)]
        \item If $G$ is itself Hausdorff, then the diagram in \cref{cor:diagram} collapses ``vertically'', \emph{i.e.}\ \(\Lambda^{\text{max}}_{E_\mathcal{L},P}\) and \(\Lambda^{\text{red}}_{E_\mathcal{L},P}\) are injective, and hence isomorphisms.
        \item If $G$ is amenable then $\fullalg{G} \cong \redalg{G}$ via the left regular representation (even if $G$ is not Hausdorff, see \cref{thm:wk-cont-nuc} below).
    \end{enumerate}
\end{example}

\begin{example} \label{ex:trivial-action}
  Let $\Gamma$ be a discrete non-trivial group, and define the inverse semigroup 
  \( S \coloneqq \Gamma \sqcup \{0\}, \)
  where $0 \in S$ acts as a zero element on $\Gamma$. Let $X$ be a compact Hausdorff space, and fix a non-isolated point $x_0 \in X$. Define the open, dense, and proper subspace
  \( O \coloneqq X \setminus \{x_0\} \subseteq X. \)
  Suppose $S$ acts trivially on $X$, with the domain of $0 \in S$ defined as $O$. Define the groupoid of germs of this action by
  \( G \coloneqq X \rtimes S. \)
  This is a group bundle, with trivial isotropy at each $x \in O$ and isotropy group $x_0 G x_0 \cong \Gamma$. Hence, as a set, we have
  \( G \cong O \sqcup \Gamma. \)
  The topology on $G$ restricts to the original topology on $O$ and is such that any net $(x_i)_i \subseteq O$ converging to $x_0$ in $X$ also converges to every $\gamma \in \Gamma \subseteq G$. The set of \emph{dangerous arrows} in $G$ is given by $D = \Gamma$. This groupoid appears as a fundamental example in \cite{Khoshkam-Skandalis-reg-rep}.
  
  The algebra $\algalg{G}$ consists of functions $a \colon G \to \mathbb{C}$ that are continuous on $O$ and satisfy the condition
  \begin{equation}\label{eq:lim-condition-functions-C_c(G)}
      \lim_{x \to x_0} a(x) = \sum_{\gamma \in \Gamma} a(\gamma), 
  \end{equation} 
  where the sum is finite. Consequently, we obtain a canonical isomorphism of \Star{}algebras:
  \[ \algalg{G} \cong \left\{(\varphi, \psi) \in C(X) \oplus C_c(\Gamma) : \varphi(x_0) = \epsilon(\psi) \right\}, \]
  where $\epsilon \colon \Gamma \to \mathbb{C}$ denotes the trivial representation.
  
  As a result (see \cite{Khoshkam-Skandalis-reg-rep}), we have the following isomorphisms:
  \begin{enumerate}[label=(\alph*)]
    \item $\fullalg{G} \cong \left\{ (\varphi, \psi) \in C(X) \oplus \fullalg{\Gamma} : \varphi(x_0) = \epsilon(\psi) \right\}$;
    \item $\redalg{G} = \fullalg{G}$ if and only if $\Gamma$ is amenable, which is equivalent to $G$ being amenable;
    \item if $\Gamma$ is not amenable, then $\redalg{G} \cong C(X) \oplus \redalg{\Gamma}$;
    \item and in all cases, $\essmaxalg{G} = \essalg{G}=\essalgalg{G}\cong C(X)$.
  \end{enumerate}
  
  In particular, the algebra $\essalg{G}$ does \emph{not} depend on the group $\Gamma$. This behavior arises because $\redalg{G}$ is quotiented by functions supported on meager sets, and the non-trivial part of the action of $S = \Gamma \sqcup \{0\}$ on $X$ occurs only on the meager set $X \setminus O = \{x_0\}$.
\end{example}

\subsection{A class of essential representations} \label{sec:class of ess reps}
In this subsection we describe a class of essential representations of a given \'etale groupoid \(G\) (cf.\ \cref{prop: a class of ess reps}). A particular sub-class, in fact, has already appeared in \cite{bruce-li:2024:alg-actions} (see \cref{sec:bruce-li} for more details).
We start with the following well-known definition.
\begin{definition} \label{def:inv set of units}
    Let \(G\) be an \'etale groupoid, and let \(X \subseteq G\) be its unit space.
    We say that \(\Gamma \subseteq X\) is \emph{invariant} if \(\s(g) \in \Gamma\) whenever \(\rg(g) \in \Gamma\), that is, $\s^{-1}(\Gamma)=\rg^{-1}(\Gamma)$.
\end{definition}

\begin{example} \label{ex:inv sets}
    \(X \subseteq G\) itself is clearly invariant. By \cref{lemma:dangerous arrows}, the set \(D \cap X\) is also invariant (and so is its complement).
\end{example}

For the following proposition (and only then) we will change our convention in \eqref{eq:convention about AB}. Given a unit \(x \in X\) and a bisection \(s \subseteq G\) such that \(x \in s^*s=\s(s)\), we will denote by \(s \gamma\coloneqq\rg(g)\) the \emph{range} of the (unique) arrow \(g \in s\) with \(\s(g) = \gamma\). In the previous naming convention in \eqref{eq:convention about AB} this would have been denoted by \(s \gamma s^*\). Nevertheless, we find this notation quite troublesome in some of the computations below.
\begin{proposition} \label{prop: a class of ess reps}
    Let \(G\) be an \'etale groupoid with locally compact Hausdorff unit space \(X \subseteq G\). Suppose that \(G\) can be covered by countably many open bisections, and let \(\Gamma \subseteq X\) be a given invariant set. Then
    \begin{align*}
        \pi \colon \algalg{G} & \to \B\left(\ell^2\left(\Gamma\right)\right), \\
        \pi\left(a v_s\right)\left(\delta_{\gamma}\right) & \coloneqq
            \left\{\begin{array}{rl}
                 a\left(s \gamma\right) \delta_{s \gamma} & \text{if } \;\gamma \in s^*s, \\
                 0 & \text{otherwise.}
            \end{array} \right.
    \end{align*}
    defines a representation of \(\algalg{G}\).\footnote{\, Here, \(a v_s\) is a continuous function supported on the open bisection \(s \subseteq G\) and vanishing at infinity, see \cref{notation:cont functions as sums}.} Moreover, if \(\Gamma\) contains no dangerous unit, that is, if $\Gamma\sbe X\backslash D$, then \(\pi\) is essential.
\end{proposition}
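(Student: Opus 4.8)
The plan is to route everything through the matrix coefficients of $\pi$ against the canonical basis $(\delta_\gamma)_{\gamma\in\Gamma}$ of $\ell^2(\Gamma)$. First I would check that $\pi$ actually lands in $\B(\ell^2(\Gamma))$: invariance of $\Gamma$ (i.e.\ $\s^{-1}(\Gamma)=\rg^{-1}(\Gamma)$) guarantees that for $\gamma\in\Gamma\cap\s(s)$ the range $s\gamma=\rg(g)$ (with $g\in s$, $\s(g)=\gamma$) again lies in $\Gamma$, so $\pi(av_s)\delta_\gamma\in\ell^2(\Gamma)$; and since $\gamma\mapsto s\gamma$ is injective on $\s(s)\cap\Gamma$ (as $s$ is a bisection), each $\pi(av_s)$ is a weighted partial shift with $\|\pi(av_s)\|\le\|a\|_\infty$, so $\pi(a)$ is a finite sum of bounded operators. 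The key computation is then that, for $a\in\algalg G$ and $\gamma,\gamma'\in\Gamma$,
\[
\braket{\delta_{\gamma'}}{\pi(a)\delta_\gamma}=\sum_{\substack{g\in G\\ \s(g)=\gamma,\ \rg(g)=\gamma'}}a(g),
\]
a finite sum (at most one $g$ per bisection in a decomposition of $a$). I would prove this first for a single $av_s$, where the sum has at most one term, namely the unique $g\in s$ from $\gamma$ to $\gamma'$, and $a(s\gamma)=a(\rg(g))=(av_s)(g)$, and then add over a decomposition $a=\sum_i a_iv_{s_i}$, regrouping the result as a sum over arrows $g$ from $\gamma$ to $\gamma'$ using $a(g)=\sum_{i\colon g\in s_i}(a_iv_{s_i})(g)$. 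Since the right-hand side depends only on $a$ as a function on $G$, this simultaneously shows that $\pi$ is well defined, independently of the chosen decomposition and of any cancellations between the $s_i$.

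With the matrix-coefficient formula in hand, the algebraic axioms become bookkeeping. For multiplicativity I would expand $\braket{\delta_{\gamma'}}{\pi(a)\pi(b)\delta_\gamma}=\sum_{\gamma''\in\Gamma}\braket{\delta_{\gamma'}}{\pi(a)\delta_{\gamma''}}\braket{\delta_{\gamma''}}{\pi(b)\delta_\gamma}$ and recognize the resulting double sum over $g\colon\gamma''\to\gamma'$ and $h\colon\gamma\to\gamma''$ as exactly $\sum_{k\colon\gamma\to\gamma'}(a*b)(k)=\braket{\delta_{\gamma'}}{\pi(a*b)\delta_\gamma}$, via the substitution $k=gh$ (and $\s(g)=\rg(h)=\gamma''$). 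For the involution I would use $a^*(g)=\overline{a(g^{-1})}$ together with the fact that $g\mapsto g^{-1}$ interchanges the arrow sets from $\gamma$ to $\gamma'$ and from $\gamma'$ to $\gamma$, yielding $\braket{\delta_{\gamma'}}{\pi(a^*)\delta_\gamma}=\overline{\braket{\delta_\gamma}{\pi(a)\delta_{\gamma'}}}=\braket{\delta_{\gamma'}}{\pi(a)^*\delta_\gamma}$. Hence $\pi$ is a $*$-representation.

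For the final (essentiality) claim, suppose $\Gamma\subseteq X\setminus D$ and take $a\in\algalg G$ with $E_\L(a^*a)=0$. Because $G$ is covered by countably many bisections, \cref{cor:nucleus is in d} gives $\supp(a)\subseteq D$. Now fix $\gamma,\gamma'\in\Gamma$: every arrow $g$ appearing in the matrix-coefficient sum has $\s(g)=\gamma\in\Gamma\subseteq X\setminus D$, so $\s(g)\notin D$, and by \cref{lemma:dangerous arrows} this forces $g\notin D$, whence $a(g)=0$. Thus $\braket{\delta_{\gamma'}}{\pi(a)\delta_\gamma}=0$ for all $\gamma,\gamma'\in\Gamma$, i.e.\ $\pi(a)=0$, and $\pi$ is essential. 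The one genuinely delicate point in the whole argument is exactly this well-definedness/essentiality interplay: one cannot argue termwise from a decomposition $a=\sum_i a_iv_{s_i}$, since the individual coefficients $a_i(s_i\gamma)$ may well be nonzero; it is only after the cancellations recorded by the matrix-coefficient formula that the true function value $a(g)$ surfaces, and this is precisely the quantity controlled by $\supp(a)\subseteq D$.
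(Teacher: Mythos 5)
Your proof is correct, and it takes a genuinely different (and in a couple of respects more careful) route than the paper's. The paper verifies multiplicativity by a direct computation on decompositions \(a=\sum_i a_iv_{s_i}\), \(b=\sum_j b_jv_{t_j}\) using the bisection calculus for \(s_it_j\), and it handles essentiality via the identity \((a^*a)(\gamma)=\norm{\pi(a)\delta_\gamma}^2\) for \(\gamma\in\Gamma\): if \(\pi(a)\delta_\gamma\neq 0\) then \(\gamma\in\supp(a^*a)\subseteq D\), contradicting \(\Gamma\subseteq X\setminus D\). You instead route everything through the decomposition-independent matrix-coefficient formula \(\braket{\delta_{\gamma'}}{\pi(a)\delta_\gamma}=\sum_{g\colon\gamma\to\gamma'}a(g)\). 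This buys you two things the paper glosses over: first, the well-definedness of \(\pi\) (independence of the chosen decomposition of \(a\)), which the paper dismisses as ``clearly linear'' even though it is exactly the point where cancellations between the \(a_iv_{s_i}\) must be reconciled with the operator-valued definition; and second, the \(*\)-preservation \(\pi(a^*)=\pi(a)^*\), which the paper's proof never spells out. Your essentiality argument also differs in mechanism: rather than the norm identity, you kill each matrix coefficient using \(\supp(a)\subseteq D\) (via \cref{cor:nucleus is in d}) together with the ideal-subgroupoid property of \(D\) (\cref{lemma:dangerous arrows}: \(g\in D\) if and only if \(\s(g)\in D\)); this is equally valid, though the paper's identity is the quantitatively stronger fact (and reappears in spirit in \cref{claim:ess reps are nice}). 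The one step you should make explicit is that, in the multiplicativity computation, identifying the double sum over \(\gamma''\in\Gamma\) with the full convolution sum \(\sum_{gh=k}a(g)b(h)\) uses invariance of \(\Gamma\) once more: every factorization \(k=gh\) of an arrow \(k\colon\gamma\to\gamma'\) has intermediate unit \(\rg(h)\in\Gamma\) because \(\s(h)=\gamma\in\Gamma\); without this, the Parseval expansion over \(\Gamma\) could a priori miss factorizations, and multiplicativity would fail.
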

\begin{proof}
    We first observe that if \(s \subseteq G\) is an (open) bisection, then \(s\gamma \in \Gamma\) for all \(\gamma \in \Gamma \cap s^*s\) (recall the discussion before the statement of the result). Indeed, this just follows since \(\gamma = \s(g)\) for some \(g \in s\), and then \(s\gamma = \rg(g)\). Invariance of \(\Gamma\) then yields that \(s \gamma \in \Gamma\). The map \(\pi\) is clearly linear, and multiplicativity is a routine computation. In fact, if \(a = \sum_{i = 1}^k a_i v_{s_i}\) and \(b = \sum_{j = 1}^\ell b_j v_{t_j}\), then
    \[
        \pi\left(ab\right) \left(\delta_\gamma\right) = \sum_{i, j} \pi\left(a_i \left(s_i^* b_j\right) v_{s_i t_j}\right)\left(\delta_\gamma\right) = \sum_{\substack{i, j \\ \gamma \in \left(s_it_j\right)^*\left(s_it_j\right)}} a_i\left(s_it_j \gamma\right) b_j\left(t_j \gamma\right) \delta_{s_i t_j \gamma}.
    \]
    Likewise,
    \[
        \pi\left(a\right) \pi\left(b\right) \left(\delta_\gamma\right) = \pi\left(a\right) \sum_{\substack{j \\ \gamma \in t_j^*t_j}} b_j\left(t_j \gamma\right) \delta_{t_j \gamma} = \sum_{\substack{i, j \\ \gamma \in t_j^*t_j, t_j \gamma \in s_i^*s_i}} a_i\left(s_it_j \gamma\right) b_j\left(t_j \gamma\right) \delta_{s_i t_j \gamma}.
    \]
    Observing that \(\gamma \in (s_it_j)^*(s_it_j)\) if and only if \(\gamma \in t_j^*t_j\) and \(t_j \gamma \in s_i^*s_i\), it follows that \(\pi(ab) = \pi(a) \pi(b)\), as desired.

    Now assume that $\Gamma\sbe X\backslash D$. Let \(a \in \algalg{G}\) be such that \(\supp(a^*a)\) is meager (in \(G\)). By \cref{cor:meager supp is in d} it follows that \((a^*a)(g) = 0\) whenever \(g \not\in D\).
    In order to prove that \(\pi\) is essential it is enough to show that \(\pi(a) \delta_\gamma = 0\) for all \(\gamma \in \Gamma\sbe X \setminus D\).
    Say that \(a = a_1 v_{s_1} + \dots + a_k v_{s_k}\) (recall \cref{notation:cont functions as sums}), and \(a^*a = \sum_{i, j = 1}^k \overline{a_i} (s_i a_j) v_{s_i^* s_j}\). Hence,
    \begin{align*}
        \left(a^*a\right)\left(\gamma\right) & = \sum_{h \in G \gamma} \overline{a\left(h\right)} a\left(h\gamma\right) = \sum_{h \in G\gamma} \; \sum_{\substack{i, j = 1, \dots, k \\ h \in s_i \cap s_j}} \overline{a_i\left(\rg(h)\right)} a_j\left(\rg(h)\right) \\
        & = \sum_{\substack{i, j = 1, \dots, k \\ \gamma \in \left(s_i \cap s_j\right)^*\left(s_i \cap s_j\right)}} \overline{a_i\left(s_i \gamma\right)} a_j\left(s_i \gamma\right) = \sum_{\substack{ i, j = 1, \dots, k \\ \gamma \in s_i^* s_i \cap s_j^* s_j }} \langle a_i\left(s_i\gamma\right) \delta_{s_i \gamma}, a_j\left(s_j\gamma\right) \delta_{s_j \gamma}\rangle \\
        & = \langle \pi\left(a\right) \delta_\gamma, \pi\left(a\right) \delta_\gamma\rangle = \norm{\pi\left(a\right) \delta_\gamma}^2.
    \end{align*}
    Therefore, if \(\pi(a) \delta_\gamma \neq 0\) then \((a^*a)(\gamma) \neq 0\), which implies that \(\gamma \in \supp(a^*a) \subseteq D\) (by the assumption on \(a\) and \cref{cor:meager supp is in d}). This contradicts the hypothesis on \(\Gamma\), and hence finishes the proof.
\end{proof}

\begin{remark}
    Observe that the assumption that \(\Gamma\) contains no dangerous unit in \cref{prop: a class of ess reps} does \emph{not} imply that \(G\) is Hausdorff.
    For instance, if \(G\) is topologically free then \(\Gamma\) may be chosen to be dense (even in \cref{cor: ess reps are nice} below).
\end{remark}

\begin{remark}
    The \emph{moreover} part of \cref{prop: a class of ess reps} was found independently in \cite{KwasniewskiMeyer-essential-cross-2021}*{Lemma 7.28} and \cite{bkmk-2024-twited-grpds}*{Lemma 5.12}, where this kind of representations are called an \emph{orbit} or \emph{trivial representation}.
\end{remark}

\begin{corollary} \label{cor: ess reps are nice}
    Let \(\Gamma \subseteq X \subseteq G\) and \(\pi \colon \algalg{G} \to \B(\ell^2(\Gamma))\) be as in \cref{prop: a class of ess reps}. Suppose \(\Gamma\) contains no dangerous unit, and let \(\mathfrak{A}_\pi\) be the (concrete) \cstar algebra generated by the image of \(\pi\). Then \(\pi\) induces a quotient map
    \[
        \pi_{\rm max} \colon \essmaxalg{G} \twoheadrightarrow \mathfrak{A}_\pi \subseteq \B\left(\ell^2\left(\Gamma\right)\right).
    \]
    Moreover, if \(\Gamma\) is dense in \(X\) and \(\gamma G \gamma = \{\gamma\}\) for all \(\gamma \in \Gamma\), then the canonical quotient map \(\Lambda_{E_\L} \colon \essmaxalg{G} \to \essalg{G}\) (cf.\ \cref{cor:diagram}) factors through \(\pi_{\rm max}\), \emph{i.e.}\
    \begin{center}
        \begin{tikzcd}[scale=50em]
            \essmaxalg{G} \arrow[two heads]{r}{\Lambda_{E_\mathcal{L}}} \arrow[two heads]{rd}{\pi_{\rm max}} & \essalg{G} \arrow{r}{E_\L} & \borelb\left(X\right)/\meager\left(X\right) \\
            & \mathfrak{A}_\pi \arrow[two heads]{u}{} \arrow{r}{E} & \ell^\infty\left(\Gamma\right) \arrow{u}{Q_{X, \Gamma}},
        \end{tikzcd}
    \end{center}
    where \(E \colon \B(\ell^2(\Gamma)) \to \ell^\infty(\Gamma)\) denotes the canonical faithful conditional expectation and \(Q_{X, \Gamma}\) denotes the map \(p_A \mapsto [\chi_{\overline{A}}]\) for any \(A \subseteq \Gamma \subseteq X\).
\end{corollary}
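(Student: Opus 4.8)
The plan is to treat the two assertions separately: the first is formal, via the universal property of the maximal essential algebra, while the second rests on a continuity-plus-density argument comparing two faithful expectations.

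For the quotient map $\pi_{\rm max}$, I would simply invoke \cref{def:max ess alg}. Since $\Gamma \subseteq X \setminus D$ contains no dangerous unit, \cref{prop: a class of ess reps} tells us that $\pi$ is an essential representation of $\algalg{G}$, so it extends to a $*$-homomorphism $\pi_{\rm max} \colon \essmaxalg{G} \to \B(\ell^2(\Gamma))$. Its image is a C*-algebra densely containing $\pi(\algalg{G})$, hence equals $\mathfrak{A}_\pi$, and $\pi_{\rm max} \colon \essmaxalg{G} \twoheadrightarrow \mathfrak{A}_\pi$ is the asserted quotient map.

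For the factorisation the heart of the matter is a single computation. Let $E \colon \B(\ell^2(\Gamma)) \to \ell^\infty(\Gamma)$ be the faithful diagonal compression $T \mapsto (\langle \delta_\gamma, T\delta_\gamma\rangle)_{\gamma}$. I would first check that for $a = \sum_i a_i v_{s_i} \in \algalg{G}$ and $\gamma \in \Gamma$ the hypothesis $\gamma G \gamma = \{\gamma\}$ forces
\[
E(\pi(a))(\gamma) = \langle \delta_\gamma, \pi(a)\delta_\gamma\rangle = \sum_{i :\, \gamma \in s_i} a_i(\gamma) = a(\gamma),
\]
because the only bisections $s_i$ contributing to the diagonal entry at $\gamma$ are those through which $\gamma$ maps to itself, i.e. those that contain the unit $\gamma$. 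Thus $E \circ \pi$ coincides with the restriction $a \mapsto a|_\Gamma$ of the standard expectation $P(a) = a|_X$. On the other side, \cref{cor:expectation-ess-cb} supplies the faithful expectation $E_\ess \colon \essalg{G} \to \contb(X \setminus D)$ with $E_\ess(\Lambda_{E_\L}(a)) = a|_{X\setminus D}$ for $a \in \algalg{G}$. Now, since $G$ is covered by countably many bisections, $D$ is meager by \cref{lem:D-meager}; as $\Gamma$ is dense in $X$ and contained in $X \setminus D$, it is dense in $X \setminus D$, so the restriction $\contb(X \setminus D) \to \ell^\infty(\Gamma)$, $f \mapsto f|_\Gamma$, is isometric (continuous functions are determined by their values on $\Gamma$) and carries $a|_{X\setminus D}$ to $a|_\Gamma$. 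To obtain $\ker \pi_{\rm max} \subseteq \ker \Lambda_{E_\L}$, I would take $\tilde a \in \essmaxalg{G}$ with $\pi_{\rm max}(\tilde a) = 0$ and a net $(a_n)_n$ in $\algalg{G}$ whose images converge to $\tilde a$; then $E(\pi_{\rm max}(\tilde a^*\tilde a)) = \lim_n (a_n^*a_n)|_\Gamma = 0$, while $E_\ess(\Lambda_{E_\L}(\tilde a^*\tilde a)) = \lim_n (a_n^*a_n)|_{X\setminus D}$ in $\contb(X\setminus D)$. Since the isometric restriction sends the latter net to the former, which converges to $0$, we get $E_\ess(\Lambda_{E_\L}(\tilde a^*\tilde a)) = 0$, and faithfulness of $E_\ess$ yields $\Lambda_{E_\L}(\tilde a) = 0$. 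This gives the factorisation $\Lambda_{E_\L} = \overline{q} \circ \pi_{\rm max}$ for a surjection $\overline{q} \colon \mathfrak{A}_\pi \twoheadrightarrow \essalg{G}$; the reverse inclusion $\ker \Lambda_{E_\L} \subseteq \ker \pi_{\rm max}$ is immediate from $\Gamma \subseteq X\setminus D$, so $\overline q$ is in fact an isomorphism. The outstanding commutativity $E_\L \circ \overline q = Q_{X,\Gamma} \circ E$ is verified on the generators $\pi(a)$, where both sides produce the class of $a|_X$ in $\borelb(X)/\meager(X)$, matching the formula $Q_{X,\Gamma}(p_A) = [\chi_{\overline A}]$.

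I expect the main obstacle to be the passage from $\algalg{G}$ to the completions: one must check that the diagonal compression $E$ restricts to a genuinely faithful expectation on $\mathfrak{A}_\pi$ whose values are limits of functions $a|_\Gamma$ that extend continuously to $X\setminus D$, so that the isometric restriction $\contb(X\setminus D) \to \ell^\infty(\Gamma)$ really intertwines the two expectations in the limit. The well-definedness of $Q_{X,\Gamma}$ as a $*$-homomorphism $\ell^\infty(\Gamma) \to \borelb(X)/\meager(X)$ — equivalently, that $\overline{A\cap B}$ and $\overline A \cap \overline B$ differ by a meager set for the relevant $A,B \subseteq \Gamma$ — is the other point demanding Baire-category bookkeeping; alternatively, one sidesteps it entirely, since the kernel inclusion established above already delivers the asserted factorisation of $\Lambda_{E_\L}$ through $\pi_{\rm max}$.
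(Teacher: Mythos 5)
Your proof of the corollary as stated is correct, but the second half follows a genuinely different route from the paper's. The paper argues through norms of the point regular representations: it introduces the auxiliary maps \(\lambda_\gamma^{\mathfrak{A}}(\pi(a)) \coloneqq \lambda_\gamma(a)\) on \(\pi(\algalg{G})\) (well defined precisely because \(\gamma G\gamma = \{\gamma\}\)) and proves, via density of \(\Gamma\) and a perturbation argument with finitely supported vectors, that \(\sup_{x \in X\setminus D}\norm{\lambda_x(a)} = \sup_{\gamma\in\Gamma}\norm{\lambda_\gamma(a)}\) (\cref{claim:ess reps are nice}); combined with \cref{prop:rep red and ess calgs:ess} this bounds \(\norm{a}_{\ess}\) by \(\norm{\pi(a)}\) and yields the factorisation. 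You instead compare faithful conditional expectations: trivial isotropy gives \(E\circ\pi = (\cdot)|_\Gamma\) on \(\algalg{G}\), density of \(\Gamma\) in \(X\setminus D\) makes the restriction \(\contb(X\setminus D)\to\ell^\infty(\Gamma)\) isometric (using \cref{cor:restriction-map} for continuity off \(D\)), and faithfulness of the expectation of \cref{cor:expectation-ess-cb} converts \(\pi_{\rm max}(\tilde a)=0\) into \(\Lambda_{E_\L}(\tilde a)=0\). Both arguments consume exactly the same two hypotheses, but yours replaces the paper's somewhat delicate vector-perturbation step by machinery the paper has already established, which is arguably cleaner; the paper's claim gives a little more in return (an equality of suprema rather than a single inequality). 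On the compatibility square \(E_\L\circ\overline{q} = Q_{X,\Gamma}\circ E\) and the well-definedness of \(Q_{X,\Gamma}\) you are no less rigorous than the paper, which likewise only checks this on functions supported on a single bisection.

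One caveat: your parenthetical assertion that the reverse inclusion \(\ker\Lambda_{E_\L}\subseteq\ker\pi_{\rm max}\) is ``immediate from \(\Gamma\subseteq X\setminus D\)'', making \(\overline{q}\) an isomorphism, is not justified as written. That inclusion fails in general without the isotropy hypothesis: take \(G\) a non-amenable discrete group \(H\), so \(\Gamma = X = \{\mathrm{pt}\}\), \(D = \emptyset\); then \(\pi\) is the trivial (augmentation) representation of \(\algalg{G}=\C[H]\), which is essential but does not factor through \(\essalg{G} = \redalg{H}\). Under the corollary's full hypotheses the inclusion does hold, but only because invariance of \(\Gamma\) together with triviality of the isotropy at every point of \(\Gamma\) lets one identify \(\pi\) unitarily with an orbit-wise direct sum of the representations \(\lambda_\gamma\), \(\gamma\in\Gamma\) — an argument you would need to supply. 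Since the corollary asserts only the factorisation \(\Lambda_{E_\L} = \overline{q}\circ\pi_{\rm max}\), this overreach does not affect the correctness of your proof of the statement itself.
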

\begin{proof}
    The fact that there is some map \(\pi_{\rm max}\) follows from \(\pi\) being essential (cf.\ \cref{prop: a class of ess reps}) and the universal property of \(\essmaxalg{G}\) (recall \cref{def:max ess alg}).

    Suppose now that no unit in \(\Gamma\) has isotropy, \emph{i.e.}\ \(\gamma G \gamma = \{\gamma\}\) for all \(\gamma \in \Gamma \subseteq X\). By \cref{prop:rep red and ess calgs:ess}, the map \(\oplus_{x \in X \setminus D} \lambda_x\) is a faithful representation of \(\essalg{G}\). Consider the maps
    \[
        \lambda_\gamma^\mathfrak{A} \colon \pi\left(\algalg{G}\right) \to \B\left(\ell^2\left(G\gamma\right)\right), \;\;
        \lambda_\gamma^\mathfrak{A}\left(\pi\left(a\right)\right) \coloneqq \lambda_\gamma\left(a\right).
    \]
    One can show that \(\lambda_\gamma^\mathfrak{A}\) is, in fact, well defined (and hence a representation).\footnote{\, This is where we use that \(\gamma G \gamma = \{\gamma\}\) for all \(\gamma \in \Gamma\).}
    The following claim is the key part of the proof.
    \begin{claim} \label{claim:ess reps are nice}
        \(\sup_{x \in X \setminus D} \norm{\lambda_x(a)} = \sup_{\gamma \in \Gamma} \norm{\lambda_\gamma(a)}\) for all \(a \in \algalg{G}\).
    \end{claim}
    \begin{proof}
        Since \(\Gamma\) contains no dangerous unit the inequality ``\(\geq\)'' is clear.
        Let \(\kappa \coloneqq \sup_{x \in X \setminus D} \norm{\lambda_x(a)}\) and let \(\varepsilon \in (0, \kappa/2)\) be given.
        Fix some \(x_0 \in X \setminus D\) in such a way that \(\kappa \approx_\varepsilon \norm{\lambda_{x_0}(a)}\). Since \(\Gamma \subseteq X\) is dense, let \((\gamma_n)_n \subseteq \Gamma\) be some net converging to \(x_0\) (and to nothing else, in fact, since \(x_0 \not\in D\)).
        It suffices to prove that \(\norm{\lambda_{\gamma_n}(a)} \to \norm{\lambda_{x_0}(a)}\), but this is clear since \(\Gamma\) contains no dangerous units, and hence any finite behaviour at \(x_0\) can be witnessed at \(\gamma_n\) for all large \(n\).
        In fact, we may suppose, without loss of generality, that \(0 < \norm{\lambda_{x_0}(a)} - \varepsilon \leq \norm{\lambda_{x_0}(a)(v)}\) for some finitely supported unit vector \(v = v_1 \delta_{g_1} + \dots + v_\ell \delta_{g_\ell} \in \ell^2(G x_0)\).
        Since \(x_0 \not\in D\) (and neither is \(\gamma_n\) for any \(n\)) it follows that there are \((h_{n, j})_{n, j = 1, \dots, \ell} \subseteq G\) such that
        \begin{enumerate}[label=(\roman*)]
            \item the source of \(h_{n, j}\) is \(\gamma_n\) for all \(n\) and \(j = 1, \dots, \ell\); and
            \item the net \((h_{n, j})_n\) converges to \(g_{j}\) for all \(j = 1, \dots, \ell\).
        \end{enumerate}
        It then follows that (for all large \(n\)) \(v_n \coloneqq v_1 \delta_{h_{n,1}} + \dots + v_\ell \delta_{h_{n, \ell}}\) is a unit vector in \(\ell^2(G \gamma_n)\) such that \(\norm{\lambda_{\gamma_n}(a)(v_n)} = \norm{\lambda_{x_0}(a)(v)} \approx_\varepsilon \norm{\lambda_{x_0}(a)} \approx_\varepsilon \kappa\).
        As \(\varepsilon\) was arbitrary, the claim follows.
    \end{proof}

    By the discussion above, \cref{prop:rep red and ess calgs:ess,claim:ess reps are nice} it follows that
    \[
        \norm{a}_{\rm ess} = \sup_{x \in X \setminus D} \norm{\lambda_x\left(a\right)} = \sup_{\gamma \in \Gamma} \norm{\lambda_\gamma\left(a\right)} = \sup_{\gamma \in \Gamma} \norm{\lambda_\gamma^\mathfrak{A} \left(\pi\left(a\right)\right)}
    \]
    for all \(a \in \algalg{G}\). Thus, the map \(\Lambda_{E_\L} \colon \essmaxalg{G} \to \essalg{G}\) does factor through \(\pi_{\rm max}\). The fact that the same factorization of maps happens at the level of conditional expectations, that is, that the map \(Q_{X, \Gamma}\) in the statement of the theorem is well defined and satisfies that
    \[
        Q_{X, \Gamma}\left(E\left(\pi\left(a\right)\right)\right) = E_\L\left(\pi_{\rm red}\left(a\right)\right)
    \]
    for all \(a \in \essalgalg{G}\) is routine to check.
    Indeed, it suffices to show the above for functions supported on \emph{one} bisection, \emph{i.e.}\ \(a = av_s\). In such case \(E_\L(av_s) = [a|_{s \cap X}] \in \borelb(X)/\meager(X)\).
    Likewise, \(E(\pi(a v_s)) = [ \gamma \mapsto a(\gamma)] \in \ell^\infty(\Gamma)\) and, thus, \(Q_{X, \Gamma}(E(\pi(a v_s)))\) can be approximated up to \(\varepsilon\) by a finite linear combination of \([\chi_{\overline{A_i}}]\), where \(A_i \subseteq \Gamma\) are some level sets of \(a\).
\end{proof}

\section{Borel algebras} \label{sec:aux-alg}
This auxiliary section introduces additional (\cstar{})algebras that properly contain \(\algalg{G}\) (or its completions).  
These new algebras become necessary in \cref{thm:wk-cont-nuc,thm:wk-cont-nuc-ess} (see also \cref{def:ame,def:ess-ame}) specifically in the case when \(G\) is not Hausdorff.
The main issue in the non-Hausdorff setting is that \(\algalg{G}\) is \emph{not} closed under pointwise multiplication. This subtlety plays a crucial role in \cref{def:ame,def:ess-ame}; see also \cref{ex:trivial-action-ame}~\cref{ex:trivial-action-ame:weak}.  

Nevertheless, note that every \(a \in \algalg{G}\) is a bounded, compactly supported, and \emph{Borel} measurable function. Moreover, this class of functions is closed under convolution, as convolution preserves Borel measurability. Exploiting this property, we now define several (\cstar{})algebras.  

\begin{definition} \label{def:borel-algebras}
    Let \(G\) be an \'etale groupoid with unit space \(X \subseteq G\).
    \begin{enumerate}[label=(\roman*)]
        \item \(\borelalg{G}\) is the \Star{}algebra (under convolution) of bounded Borel functions \(a \colon G \to \C\) whose support is contained in a compact set of \(G\).\footnote{\, This is \emph{different} from the function having compact support, which is usually understood as the closed support of the function being compact, as compact subsets need not be closed. In particular, \(G\) may not have many closed compact subsets.}
        We view $\borelalg X$ as the (commutative) \Star{}subalgebra of functions in $\borelalg G$ whose support is contained in $X$.
        \item \(\borelessalg{G}\) is the quotient of \(\borelalg{G}\) by the ideal \begin{equation}\label{eq:ideal-J_D^c}
            J_D^c:=\left\{a \in \borelalg{G} \mid \supp(a) \subseteq D\right\}
        \end{equation} of functions supported on \(D\), that is,
            \(
                \borelessalg{G} \coloneqq \borelalg{G}/J_D^c.
            \)
        \item \(\fullborel{G}\) is the universal \cstar{}algebra of \(\borelalg{G}\).
        \item \(\redborel{G}\) is the completion of \(\borelalg{G}\) under the ``reduced norm'', \emph{i.e.}\
            \[
                \norm{a}_{\rm red} \coloneqq \sup_{x \in X} \norm{\lambda_x\left(a\right)}.\footnote{\, Note that one has to extend \(\lambda_x\) to Borel functions, which is done via the same formula.}
            \]
        \item \(\essmaxborel{G}\) is the universal \cstar{}algebra of \(\borelessalg{G}\).
        \item \(\essborel{G}\) is the completion of \(\borelessalg{G}\) under the ``essential norm'', \emph{i.e.}\
            \[
                \norm{a}_{\rm ess} \coloneqq \sup_{x \in X \setminus D} \norm{\lambda_x\left(a\right)}.
            \]
        \item \(\auxalg{G}\) is the subspace of \(\borelalg{G}\) generated by pointwise multiplications of functions in  \(\algalg G\), that is, elements of $\auxalg G$ are sums of products of the form \(a_1 \cdot a_2\cdots a_n\) for \(a_1,\ldots, a_n \in \algalg G\).
    \end{enumerate}
\end{definition}

\begin{remark} \label{rem:borel-algs}
    We briefly observe the following.
    \begin{enumerate}[label=(\roman*)]
        \item Since compact subsets of \( G \) are not necessarily closed, they may fail to be Borel. Consequently, the characteristic functions of compact sets do not always belong to \( \borelalg G \).
        Nevertheless, any compact subset of \(G\) is contained in a \emph{Borel} compact subset (this uses \cref{lemma:compact set is covered by fin many bisections}). Hence, every $a\in \borelalg G$ is supported on a Borel compact subset.
        Indeed, take any compact $K\sbe G$, and cover it by finitely many open bisections $K\sbe s_1\cup\ldots \cup s_n$. Then the closure $K_i$ of $K\cap s_i$ within $s_i$ is compact and Borel, and $K\sbe K_1\cup\ldots \cup K_n$ is compact.
        \item A standard argument (see \cite{Thomsen}) shows that the convolution and involution of functions still make sense on \(\borelalg{G}\) and turn it into a \Star{}algebra. Moreover, it admits a largest \cstar{}norm and, thus, $\fullborel{G}$ exists. Similarly $\essmaxborel G$ also exists, cf.\ \cref{cor:borel-algs}. We also note that the subspace of functions \(a\in \borelalg G\) with \(\supp(a)\sbe D\) is an ideal. This fact follows from the fact that \(D\) is an ideal subgroupoid of \(G\), see \cref{lemma:dangerous arrows}~\cref{lemma:dangerous arrows:subgroupoid}.
        \item Observe that \(a v_s \in \borelalg{G}\) for all open bisection \(s \subseteq G\) and Borel compactly supported \(a \colon ss^* \to \C\). Indeed, if \(B \subseteq \C\) is Borel, then
        \begin{align*}
            \left(av_s\right)^{-1}\left(B\right) & = \left(av_s\right)^{-1}\left(B \setminus \{0\}\right) \sqcup \left(av_s\right)^{-1}\left(0\right) \\
            & = a^{-1}\left(B \setminus \{0\}\right) \cdot s \sqcup G \setminus s \sqcup a^{-1}\left(0\right) \cdot s.
        \end{align*}
        The right hand side is the union of two Borel and a closed set, hence Borel as well. In particular, \(\algalg{G} \subseteq \auxalg{G} \subseteq \borelalg{G}\). Likewise, from \cref{cor:nucleus is in d} and the previous item it follows that \(\essalgalg{G} \subseteq \borelessalg{G}\).
        \item Assuming that $G$ can be covered by countably many open bisections, it follows from \cref{prop:rep red and ess calgs:red,prop:rep red and ess calgs:ess} and the previous item that \(\redalg{G} \subseteq \redborel{G}\) and \(\essalg{G} \subseteq \essborel{G}\) canonically, \emph{i.e.}\ these embeddings are extensions of their algebraic counterparts.
    \end{enumerate}
\end{remark}

\begin{lemma} \label{lemma:structure of mathfrak a}
    The space \(\auxalg{G}\) defined in \cref{def:borel-algebras} is exactly the linear span of monomials of the form \(a_1 \cdots a_n\), where \(a_i\) is a continuous, compactly supported function on some open bisection \(s_i \subseteq G\).
\end{lemma}
\begin{proof}
This follows from the fact that every $a\in \algalg G$ is a sum of functions as in the statement (i.e. compactly supported and continuous on an open bisection).
\end{proof}

\begin{remark}
    It would be interesting to know if/when the monomials \(a_1 \cdots a_n\) appearing in \cref{lemma:structure of mathfrak a} can be taken to be of \emph{uniformly} bounded length.
    This is akin to the phenomenon of \emph{contracting} self-similar groups (and their associated groupoids), as studied, for instance, in \cites{szakacs-2021,szakacs-2023} and references therein.
\end{remark}

\begin{lemma}\label{lem:ess-norm-supp-bisection}
If \( a\in \borelalg G \), then
$$\|a\|_\infty\leq \|a\|_{\rm red} \leq \|a\|_\max,$$
and if $\supp(a)$ is a (Borel but not necessarily open) bisection, then  
\[
\|a\|_\ess =\|a\|_{\infty,\ess} \coloneqq \sup_{g\in G\setminus D} |a(g)|.
\]
\end{lemma}

\begin{proof}
The first estimates follow the standard arguments, as in \cref{lemma:norms red infty max} for elements in $\algalg G$. We only prove the last estimate.
We begin by noting that
\[
\|a\|^2_\ess = \|a^*a\|_\ess = \sup_{x\in X\setminus D} \|\lambda_x(a^*a)\|.
\]
Since \( s \coloneqq \supp(a)\) is a bisection, we have  
\[
\supp(a^*a) \subseteq s^*s = \s(s) \subseteq X,
\]
which implies that \( \lambda_x(a^*a) \) acts as the (pointwise) multiplication operator by the function \( (a^*a)\circ\rg \). Therefore $\|\lambda_x(a^*a)\|=\sup_{g\in Gx}|(a^*a)(\rg(g))|$.
Since  
\((a^*a)(\rg(g)) = |a(g)|^2\) if  $\rg(g) \in \s(s)$, and is zero otherwise,
and because \( x\in X\setminus D \) if and only if \( \s^{-1}(x) \subseteq G\setminus D \), we obtain  
\[
\|\lambda_x(a^*a)\| = \sup_{g\in G\setminus D} |a(g)|^2.
\]
Taking square roots on both sides completes the proof.
\end{proof}

Observe that $\borelalg G$ is also a (commutative) \Star{}algebra under the pointwise operations of multiplication and conjugation. Indeed, notice that, by definition
\[\borelalg G=\bigcup_{K\sbe G} B_K(G),\footnote{\, By \(B_K(G)\) we mean the functions in \(\borelalg{G}\) whose support is contained in \(K\).}\]
where the union runs over all compact subsets of $G$ (we could also restrict the union to compact Borel subsets).
Each $B_K(G)$ is then a \cstar{}subalgebra of the (commutative) \cstar{}algebra $\borelb(G)$ of all bounded Borel functions $G\to \C$.\footnote{\, Here we start considering \(B_K(G)\) as a commutative \cstar{}algebra under pointwise multiplication. This is due to technical reasons, and will be important in \cref{lemma:taking square roots}.}  

The following works as a replacement for (in general non-existing) partitions of unit in non-Hausdorff groupoids.
\begin{lemma}\label{lem:partition-into-borel-bisections}
   Given $K\sbe G$ compact with $K\sbe U \coloneqq s_1\cup \ldots \cup s_l$, for open bisections $s_1, \dots, s_l \subseteq G$, there is a partition of $U=t_1\sqcup \ldots \sqcup t_l$ into Borel bisections $t_1,\ldots,t_l\sbe G$. Moreover, we can choose $U\sbe L$ for some compact Borel subset $L\sbe G$, so that the characteristic functions $\chi_{t_i}\in \borelalg G$.
   Thus, every $a\in B_K(G)$ can be decomposed as 
   \begin{equation}\label{eq:decomposition-in-Bc(G)}
   a=a_1+\ldots +a_l,
   \end{equation}
   where $a_i \coloneqq \chi_{t_i}\cdot a\in B_K(G)$ have pairwise disjoint Borel supports.
\end{lemma}
\begin{proof}
    Cover $K$ by finitely many open bisections $s_1,\ldots, s_l$, for some natural number $l$ (that only depends on $K$, cf.\ \cref{lemma:compact set is covered by fin many bisections}).
    It follows that the closure $K_i \coloneqq \overline{s_i\cap K}^{s_i}$ as a subset of $s_i$ is compact and (closed in $s_i$, as the latter is Hausdorff), hence, Borel.
    Decompose the bisections $s_i$ into the Borel sets
    \begin{itemize}
        \item $t_1 \coloneqq s_1 \cap K_1$,
        \item $t_2 \coloneqq (s_2\backslash t_1) \cap K_2$, ... and
        \item $t_l \coloneqq (s_l\backslash (t_1\cup\ldots\cup t_{l-1})) \cap K_l$.
    \end{itemize}
    This gives a cover of $K$ by disjoint Borel bisections $t_1,\ldots,t_l$, with $t_i\sbe s_i\sbe K_1 \cup \dots \cup K_l$ for all $i$. Then $\chi_1+\ldots+\chi_l=\chi_K$ and therefore the decomposition~\eqref{eq:decomposition-in-Bc(G)} follows for all $a\in \borelalg G$ with $\supp(a)\sbe K$.
\end{proof}

The following result demonstrates a strong connection between \(\algalg{G}\) and \(\borelalg{G}\) through their representations. Specifically, we show that every representation of \(\algalg{G}\) admits a canonical extension to \(\borelalg{G}\) that preserves commutants.
\begin{lemma}\label{lem:extension-rep-to-borel}
    Let \(G\) be covered by countably many open bisections. Any representation \(\pi\colon \algalg G\to \B(\Hilb)\) extends to a representation \(\pi^{\rm B} \colon \borelalg{G} \to \B(\Hilb)\) with the same commutants in $\B(\Hilb)$, that is, $\pi(\algalg G)'=\pi^{\rm B}(\borelalg G)'$.
\end{lemma}
\begin{proof}
    Given \(\xi,\eta\in \Hilb\), consider the coefficient functional \(\pi_{\xi,\eta}\colon \algalg G\to \C\) given by $a\mapsto \pi_{\xi,\eta}(a)\coloneqq \braket{\xi}{\pi(a)\eta}$. Fix an open bisection $s\sbe G$. Restricting $\pi_{\xi,\eta}$ to functions in $\contc(s)$, we get a linear functional $\pi_{\xi,\eta}^s\colon \contc(s)\to \C$. This is bounded for the supremum norm, because $\pi$ is contractive, that is, we have
    $$\|\pi_{\xi,\eta}^s(a)\|\leq \norm{a}_\infty \cdot \norm{\xi} \norm{\eta} \quad \text{ for all } \; a\in \contc(s).$$
    Hence $\pi_{\xi,\eta}^s$ extends to a bounded linear functional on $\contz(s)$, and therefore there exists a complex bounded Borel measure $\mu_{\xi,\eta}^s$ on $G$ (with support contained in $s$) that represents $\pi_{\xi,\eta}^s$, in the sense that
    $$\pi_{\xi,\eta}^s(a)=\int a(g)d\mu_{\xi,\eta}^s(g)\quad \text{ for all } \; a\in \contc(s).$$
    An argument as in the proof of \cite{BussExel:Fell.Bundle.and.Twisted.Groupoids}*{Theorem~2.13} shows that the family of measures $(\mu_{\xi,\eta}^s)_{s \in S}$ is compatible, in the sense that $\mu_{\xi,\eta}^s(B)=\mu_{\xi,\eta}^t(B)$ whenever $B$ is a Borel subset of $s\cap t$ for $s,t$ two open bisections. Since $G$ is covered by countably many open bisections, a standard argument (as in \cite{BussExel:Fell.Bundle.and.Twisted.Groupoids}*{Theorem~2.13}) shows that there exists a complex bounded Borel measure $\mu_{\xi,\eta}$ on $G$ extending all $\mu_{\xi,\eta}^s$. We can then define for $a\in \borelalg{G}$, $\pi^{\rm B}(a)\in \B(\Hilb)$ to be the unique operator satisfying
    \begin{equation}\label{eq:def-tilde-pi}
        \braket{\xi}{\pi^{\rm B}(a)\eta}=\int a(g)d\mu_{\xi,\eta}\quad \text{ for all } \; \xi,\eta\in \Hilb.
    \end{equation}
    Let us now show that $\pi^{\rm B}\colon \borelalg{G} \to \B(\Hilb)$ is a representation. First we show that $\pi^{\rm B}(a)^*=\pi^{\rm B}(a^*)$ for all $a\in \borelalg{G}$. Given $\xi,\eta\in \Hilb$, we have
    $$\braket{\xi}{\pi^{\rm B}(a)^*\eta}=\braket{\pi^{\rm B}(a)\xi}{\eta}=\overline{\braket{\eta}{\pi^{\rm B}(a)\xi}}=\overline{\int a(g)d\mu_{\eta,\xi}(g)},$$
    and
    $$\braket{\xi}{\pi^{\rm B}(a^*)\eta}=\int a^*(g)d\mu_{\xi,\eta}(g)=\int \overline{a(g^{-1})}d\mu_{\xi,\eta}(g)=\overline{\int a(g^{-1})d\overline{\mu}_{\xi,\eta}(g)}.$$
    We know that $\pi(a)^*=\pi(a^*)$ for all $a\in \algalg G$. By the above computation, this means that the measure $\mu_{\eta,\xi}$ coincides with $\overline{\mu}_{\xi,\eta}\circ\mathrm{inv}$, the pullback of $\overline{\mu}_{\xi,\eta}$ along the inversion map $\mathrm{inv}\colon G\to G$. But then the same computation above implies that $\braket{\xi}{\pi^{\rm B}(a)^*\eta}=\braket{\xi}{\pi^{\rm B}(a^*)\eta}$. Since $\xi,\eta\in \Hilb$ were arbitrary, we get $\pi^{\rm B}(a)^*=\pi^{\rm B}(a^*)$, as desired. The proof that $\pi^{\rm B}$ is multiplicative, that is, $\pi^{\rm B}(ab) = \pi^{\rm B}(a)\pi^{\rm B}(b)$ is similar. Fixing $\xi,\eta\in\Hilb$, we consider the equation
    \begin{equation}\label{eq:tilde-pi-mult}
        \braket{\xi}{\pi^{\rm B}(ab)\eta}=\braket{\xi}{\pi^{\rm B}(a)\pi^{\rm B}(b)\eta}.
    \end{equation}
    We claim that if this equation holds for a fixed $a\in \borelalg{G}$ and for all $b\in \algalg{G}$, then it also holds for the same $a\in \borelalg{G}$ and for all $b\in \borelalg{G}$. Indeed, as every $a\in \borelalg{G}$ is a finite sum of functions supported on open bisections, we may already assume that $a$ is supported on some open bisection $s\sbe G$. Since we already know that $\braket{\xi}{\pi^{\rm B}(a)\pi^{\rm B}(b)\eta}=\braket{\pi^{\rm B}(a^*)\xi}{\pi^{\rm B}(b)\eta}$, \eqref{eq:tilde-pi-mult} is equivalent to 
    $$\int (ab)(g)d\mu_{\xi,\eta}(g)=\int b(g)d\mu_{\pi^{\rm B}(a^*)\xi,\eta}(g).$$
    Now notice that the convolution product $a*b$ is given by $(a*b)(g)=a(h)b(h^{-1}g)$ if there is (a necessarily unique) $h\in s$ with $\rg(h)=\rg(g)$, and it is zero otherwise. We may write this as $(a*b)(g)=a_s(g)\alpha_s(b)(g)$, where $a_s(g)\coloneqq a(\rg(s^*\rg(g)s))$ and $\alpha_s(b)(g) \coloneqq b(h^{-1}g)$ if there is $h\in s$ with $\rg(h)=\rg(g)$, and zero otherwise. \cref{eq:tilde-pi-mult} for $b\in \algalg{G}$ is then equivalent to the equality of the measures
    \[
        a_s(g)d\theta_s(\mu_{\xi,\eta})(g) = d\mu_{\pi^{\rm B}(a^*)\xi,\eta}(g),
    \]
    where $\theta_s(\mu)(B) \coloneqq \mu(s\cdot B)$. But this, in turn, means that~\eqref{eq:tilde-pi-mult} also holds for all $b\in \borelalg{G}$. Therefore, if~\eqref{eq:tilde-pi-mult} holds for a fixed $a\in \borelalg{G}$ and all $b\in \algalg{G}$, then it also holds for the same $a$ and all $b\in \borelalg{G}$. By symmetry (or by replacing $a$ and $b$ by $a^*$ and $b^*$ and using that $\pi^{\rm B}$ is already involutive), if~\eqref{eq:tilde-pi-mult} holds for a fixed $b\in \algalg{G}$ and all $a\in \algalg{G}$, then it also holds for the same $b$ and all $a\in \borelalg{G}$. However, we know that~\eqref{eq:tilde-pi-mult} holds for all $a,b\in \algalg{G}$. It follows from the claim that it holds for all $a,b\in \borelalg{G}$, as desired.

    To prove the statement about the commutants, take \(T\in \pi(\algalg G)'\), that is, \(\braket{\xi}{T\pi(a)\eta} = \braket{\xi}{\pi(a)T\eta}\) for all \(\xi,\eta\in \Hilb\) and all \(a\in \algalg G\).
    This means that the measures \(\mu_{T^*\xi,\eta}\) and \(\mu_{\xi,T\eta}\) coincide, and therefore also \(T\in \pi^B(\borelalg G)'\).
\end{proof}

\begin{remark}\label{rem:non-essential-Borel-rep}
  Even if $\pi$ is an essential representation of $\algalg G$, the extended representation $\pi^{\rm B}\colon \borelalg G\to \B(\Hilb)$ is not necessarily \emph{`Borel essential'}, in the sense that $\pi^{\rm B}$ need not vanish on the ideal $J_D^c \subseteq \borelalg G$ of functions supported on $D$, as defined in \eqref{eq:ideal-J_D^c}. Notably, it is possible for $\redalg G$ to coincide with $\essalg G$ even when $G$ is non-Hausdorff; see \cite{CEPSS-2019}. In such cases, the singular ideal is zero, so that every representation of $\algalg G$ is essential and therefore $\essmaxalg G=\fullalg G$. Let $x \in X$ be a dangerous unit, and consider the regular representation $\lambda_x\colon \algalg G\to \B(\Hilb)$. The extended representation $\lambda_x^B$ coincides with the canonical left regular representation $\borelalg G\to \B(\Hilb)$, as used in \cref{def:borel-algebras}. This representation does not vanish on the ideal $J_D^c$; for instance, the characteristic function $\chi_x \in \borelalg G$ of the singleton $\{x\}$ belongs to $J_D^c$, and yet $\lambda_x(\chi_x) \neq 0$.  

  We thank Xin Li for bringing this example to our attention.
\end{remark}

We end the section with two immediate corollaries. The first of these should be compared with \cref{cor:diagram}.
\begin{corollary} \label{cor:borel-algs}
    Let \(G\) be an \'etale groupoid that is covered by countably many open bisections. The identity map \(\text{id} \colon \algalg{G} \to \algalg{G}\) induces maps making the following diagram commute:
    \begin{center}
        \begin{tikzcd}[scale=50em,row sep=large, column sep=large]
            \algalg{G} \arrow[hook]{r}{} \arrow[two heads]{d}{} & \borelalg{G} \arrow[two heads]{d}{} \arrow[hook]{r}{{\rm dense}} & \fullborel{G} \arrow[two heads]{d}{} \arrow[two heads]{r} & \redborel{G} \arrow[two heads]{d}{} \\
            \essalgalg{G} \arrow[hook]{r}{} & \borelessalg{G} \arrow[hook]{r}{{\rm dense}} & \essmaxborel{G} \arrow[two heads]{r}{} & \essborel{G}.
        \end{tikzcd}
    \end{center}
\end{corollary}
\begin{proof}
    The inclusions \(\algalg{G} \subseteq \borelalg{G}\) and \(\essalgalg{G} \subseteq \borelessalg{G}\) were already discussed in \cref{rem:borel-algs}. We turn our attention to \(\borelalg{G} \subseteq \redborel{G}\) and \(\borelessalg{G} \subseteq \essborel{G}\). Take some non-zero \(a \in \borelalg{G}\).
    It suffices to show that \(\oplus_{x \in X} \lambda_x(a) \neq 0\).
    Since \(a \neq 0\) there is some \(g \in G\) with \(a(g) \neq 0\). Putting \(x \coloneqq g^{-1}g\), one has that
    \[
        \langle \lambda_x(a) \delta_x, \delta_g \rangle = a(g) \neq 0,
    \]
    and thus \(\lambda_x(a) \neq 0\), as desired. It follows that \(\borelalg{G}\) does embed densely into \(\redborel{G}\) and, since the latter is quotiented onto by \(\fullborel{G}\), it also follows that \(\borelalg{G}\) embeds densely into \(\fullborel{G}\). 
    
    The analogous claims for the essential counterparts is more subtle. Note 
    \[
        \left\{a \in \borelalg G \mid \supp(a) \subseteq D\right\} \subseteq \ell^\infty\left(D\right),
    \]    
    (as vector spaces). Again, this follows from \(D\) being an ideal subgroupoid, cf.\ \cref{lemma:dangerous arrows}. In particular, it follows that there is a (purely algebraic) faithful \(j\)-map
    \[
        j^{\rm B}_\ess \colon \borelessalg G \to \ell^\infty\left(G \setminus D\right).
    \]
    If \(a \in \borelessalg{G}\) is non-zero then there is some \(g \in G\setminus D\) such that \(j^{\rm B}_\ess(a)(g) \neq 0\). Again, putting \(x \coloneqq g^{-1}g\), one has that
    \(\langle \lambda_x(a) \delta_x, \delta_g \rangle = j^{\rm B}_\ess(a)(g) \neq 0\), proving that \(\oplus_{x \in X \setminus D} \lambda_x\) is a faithful representation of \(\borelessalg{G}\).

    Lastly, the quotient maps between the \cstar{}algebras exist by construction, since the norms align.
\end{proof}

Before \cref{cor:inclusions c into b} we just recall the following definition, see \cite{Pisier}.

\begin{definition}
    An inclusion of \cstar{}algebras \(\iota \colon A \into B\) is \emph{max-injective} if it induces an embedding under the maximal tensor product. More precisely, for every other \cstar{}algebra $C$, the canonical homomorphism
    \[
        \iota\otimes_{\max}\id_C\colon A\otimes_{\max}C\to B\otimes_{\max}C
    \]
    is injective.
\end{definition}

\begin{corollary} \label{cor:inclusions c into b}
    Let \(G\) be an \'etale groupoid that is covered by countably many open bisections. The canonical inclusion \(\algalg G \subseteq \borelalg G\) extends to
    \begin{enumerate}[label=(\roman*)]
        \item \label{cor:inclusions c into b:1} an embedding \(\fullalg G \subseteq \fullborel G\); 
        \item \label{cor:inclusions c into b:2} a \Star{}homomorphism \(\psi \colon \essmaxalg{G} \to \essmaxborel G\);
        \item \label{cor:inclusions c into b:3} an embedding \(\redalg G \subseteq \redborel G\); and
        \item \label{cor:inclusions c into b:4} an embedding \(\essalg{G} \subseteq \essborel G\).
    \end{enumerate}
    Moreover, the embedding in \cref{cor:inclusions c into b:1} is max-injective.
\end{corollary}
\begin{proof}
    The embeddings \cref{cor:inclusions c into b:3,cor:inclusions c into b:4} were discussed in \cref{rem:borel-algs}. For \cref{cor:inclusions c into b:1,cor:inclusions c into b:2}, observe that the canonical embeddings \(\algalg G \subseteq \borelalg{G} \subseteq \fullborel{G}\) and \(\essalgalg G \subseteq \borelessalg{G} \subseteq \essmaxborel{G}\) naturally extend to \Star{}homomorphisms \(\varphi \colon \fullalg{G} \to \fullborel{G}\) and \(\psi \colon \essmaxalg{G} \to \essmaxborel{G}\) due to the universal properties of the domains. It suffices to show that \(\varphi\) is isometric.

    Take some representation \(\pi \colon \fullalg G \to \B(\Hilb)\). By \cref{lem:extension-rep-to-borel}, the restriction of \(\pi\) to \(\algalg{G}\) extends to \(\borelalg{G}\), and thus to a map \(\pi^{\rm B} \colon \fullborel{G} \to \B(\Hilb)\) by the universal property of the latter. Thus,
    \[
        \norm{\pi(a)} = \norm{\pi^{\rm B}(a)} = \norm{\pi^{\rm B}(\varphi(a))}
    \]
    for all \(a \in \algalg{G} \subseteq \fullalg{G}\). Since \(\pi\) was arbitrary, this shows that \(\norm{\varphi(a)} = \norm{a}\) for all \(a \in \algalg{G}\) and, in particular, \(\varphi\) is isometric.

    Finally, the max-injectivity of the embedding in \cref{cor:inclusions c into b:1} follows directly from \cite{Brown-Ozawa:Approximations}*{Proposition 3.6.6}, which gives a characterization of max-injective embeddings $\iota\colon A\into B$ in terms of the existence of c.c.p.\ maps $\varphi\colon B\to \pi(A)''$ satisfying $\varphi(a)=\pi(a)$ for all $a\in A$ whenever $\pi\colon A\to \B(\Hilb)$ is a representation. Indeed, for the embedding in \cref{cor:inclusions c into b:1} we can even find a \Star{}homomorphism $\varphi\colon B\to \pi(A)''$ (namely, $\varphi=\pi^{\mathrm B}$) satisfying the above condition by \cref{lem:extension-rep-to-borel}. 
\end{proof}

\begin{remark} \label{remark:problem extending to essential}
    The argument used in the proof of \cref{cor:inclusions c into b}~\cref{cor:inclusions c into b:1} does \emph{not} extend to \cref{cor:inclusions c into b:2} due to the obstruction discussed in \cref{rem:non-essential-Borel-rep}. Specifically, it is unclear whether the canonical map \(\psi \colon \essmaxalg{G} \to \essmaxborel{G}\) is max-injective. This difficulty arises because \cref{lem:extension-rep-to-borel} does not apply to essential representations, as already noted in \cref{rem:non-essential-Borel-rep}.

    Nevertheless, it remains possible that \(\psi\) is max-injective. In fact, we are not aware of any examples where \(\psi\) fails to be injective. For instance, if \(\essmaxalg{G} = \essalg{G}\) is nuclear, then \(\psi\) is max-injective: since \(\essalg{G}\) embeds into \(\essborel{G}\), and every embedding of a nuclear \(\mathrm{C}^*\)-algebra is necessarily max-injective.
\end{remark}

\begin{notation}
Given $K\sbe G$ compact, we shall write $B_K(G,A)$ for the Banach space completion of $B_K(G)\odot A$ viewed as a subspace of the Banach space $B_b(G,A)$ with respect to the supremum norm.
Viewing $B_K(G)$ as a commutative \cstar{}algebra with pointwise multiplication, $B_K(G,A)$ is, as Banach space, just the \cstar{}tensor product $B_K(G)\otimes A$.
\end{notation}

\begin{lemma}\label{lem:embedding-max-tensor}
Let \(A\) be an arbitrary \cstar{}algebra. Given a compact subset $K\sbe G$, the inclusion map $B_K(G)\odot A\into \redborel G\otimes_{\max} A$ extends to a continuous linear embedding 
$$\iota_K\colon B_K(G,A)\into \redborel G\otimes_{\max}A$$
that remains injective after composition with the canonical quotient homomorphism $Q\colon \redborel G\otimes_{\max}A \onto \redborel G\otimes_{\min}A$. More precisely, there is a constant $C_K$ satisfying
$$\|x\|_\infty \leq \|Q(\iota_K(x))\|_{\redborel G\otimes_{\min}A}\leq \|\iota_K(x)\|_{\redborel G\otimes_{\max}A}\leq C_K\cdot\|x\|_{\infty}$$
for all $x \in B_K(G,A)$. Likewise, $\iota_K$ induces an embedding 
$$\iota_K^\ess\colon B_K^\ess(G,A)\into \essborel G\otimes_{\max}A$$
satisfying the inequalities
$$\|x\|_{\infty,\ess} \leq \|Q(\iota_K^\ess(x))\|_{\essborel G\otimes_{\min}A}\leq \|\iota_K^\ess(x)\|_{\essborel G\otimes_{\max}A}\leq C_K\cdot\|x\|_{\infty,\ess},$$
where $Q^\ess\colon \essborel G\otimes_{\max}A \onto \essborel G\otimes_{\min}A$ denotes the quotient map, and
\[\|x\|_{\infty,\ess} \coloneqq \sup_{g\in G\backslash D}\|x(g)\|.\]
\end{lemma}
\begin{proof}
    Let $b=\sum_{i=1}^nf_i\otimes a_i\in B_K(G)\odot A$ with $f_i\in B_K(G)$ and $a_i\in A$, $i=1,\ldots, n$. Choose a cover $t_1,\ldots,t_l$ of $K$ as in \cref{lem:partition-into-borel-bisections}.    
    Defining $f_{i,j}:=f_i\cdot \chi_{t_j}$, we can decompose $b$ as
    $$b=\sum_{j=1}^lb_j=\sum_{j=1}^l\left(\sum_{i=1}^n f_{i,j}\otimes a_i\right),$$
    where $b_j\coloneqq\sum_{i=1}^n f_{i,j}\otimes a_i$ and each $f_{i,j}$ is supported on the bisection $t_j$ for all $i=1,\ldots, n$. It follows that $\supp(f_{i,j}^**f_{k,j})\sbe X$. Since $\borelb(X)$ is a (commutative and hence) nuclear \cstar{}algebra, there exists a unique \cstar{}norm on $\borelb(X)\odot A$, so that, for each $j=1,\ldots, l$, we get
    \begin{align*}
    \left\|\sum_{i=1}^n f_{i,j}\otimes a_i\right\|^2=\left\|\sum_{i,k=1}^nf_{i,j}^**f_{k,j}\otimes a_i^*a_k\right\|
    =\sup_{x\in X}\left\|\sum_{i,k=1}^n\sum_{g\in G_x}\overline{f_{i,j}(g)}f_{k,j}(g)a_i^*a_k\right\|\\
    =\sup_{g\in G }\left\|\sum_{i,k=1}^n\overline{f_{i,j}(g)}f_{k,j}(g)a_i^*a_k\right\|=\left(\sup_{g\in G }\left\|\sum_{i=1}^nf_{i,j}(g)a_i\right\|\right)^2.
    \end{align*}
    Therefore, defining $C_K \coloneqq l=l(K)$, we get
    $$\|\iota_K(b)\|\leq \|\iota_K(b_1)\|+\ldots + \|\iota_K(b_l)\|=\|b_1\|_\infty+\ldots + \|b_l\|_\infty\leq C_K\cdot \|b\|_\infty.$$
    Finally, the inequality $\|b\|_\infty \leq \|Q(\iota_K(b))\|$ is obtained by representing $\redborel G\otimes_{\min}A$ on $\ell^2(G)\otimes \Hilb$ via $\lambda\otimes\pi$, where $\pi\colon A\to \B(\Hilb)$ is a faithful representation. Indeed, for $b\in B_K(X)\odot A$ as above, the norm $\|Q(\iota_K(b))\|$ is then given by $\sup_{x\in X}\|(\lambda_x\otimes\pi)(b)\|$, and from \cref{rem:j-map-red} we have
    $$\braket{\delta_g\otimes\eta}{(\lambda_{\s(g)}\otimes\pi)(b)(\delta_{\s(g)}\otimes\eta)}=\braket{\xi}{\pi(b(g))\eta}$$
    for all $g\in G$, $\xi,\eta\in \Hilb$. This yields the desired norm inequality.

    The statements involving the essential Borel algebra are proved in a similar way, using the definition of the reduced essential norm and the equality $\|a\|_\ess=\|a\|_{\infty,\ess}$ for $a\in \borelalg G$ supported on a bisection, see \cref{lem:ess-norm-supp-bisection}. 
\end{proof}

The above result, in particular, allows us to view $B_K(G,A)$ (resp.\ $B_K^\ess(G,A)$) as a closed subspace of both tensor products $\redborel G\otimes_{\max}A$ and $\redborel G\otimes_{\min}A$ (resp.\ $\essborel G\otimes_{\max}A$ and $\essborel G\otimes_{\min}A$).

\begin{corollary}\label{cor:norms-equivalent-B_K(G)}
All norms $\|\cdot\|_\max$, $\|\cdot\|_{\rm red}$ and $\|\cdot\|_\infty$ are equivalent on $B_K(G)$, in particular this is a closed subspace of both $\fullborel{G}$ and $\redborel G$.

Similarly, all norms $\|\cdot\|_{\max,\ess}$, $\|\cdot\|_\ess$ and $\|\cdot\|_{\infty,\ess}$ are equivalent on $B_K^\ess(G)$ and this is a closed subspace of both $\essmaxborel{G}$ and $\essborel G$.
\end{corollary}

We end the section observing the following relationship between the kernels of the canonical quotient maps between reduced and essential \cstar{}algebras and their Borel counterparts.
\begin{proposition} \label{thm:quotient maps}
    Let \(G\) be an \'etale groupoid that is covered by countably many open bisections. Consider the assertions:
    \begin{enumerate}[label=(\roman*)]
        \item \label{thm:quotient maps:borel} The canonical map \(\redborel{G} \twoheadrightarrow \essborel{G}\) is injective.
        \item \label{thm:quotient maps:red} The canonical map \(\redalg{G} \twoheadrightarrow \essalg{G}\) is injective.
        \item \label{thm:quotient maps:zero} \(\NN_{E_\L} \cap \algalg G = 0\).
    \end{enumerate}
    Then \cref{thm:quotient maps:borel} \(\Rightarrow\) \cref{thm:quotient maps:red} \(\Rightarrow\) \cref{thm:quotient maps:zero}.
\end{proposition}
\begin{proof}
    \cref{thm:quotient maps:borel} clearly implies \cref{thm:quotient maps:red}, since \(\redalg G \subseteq \redborel G\) canonically (and similarly for the essential algebras, see \cref{cor:borel-algs}).
    The fact that \cref{thm:quotient maps:red} implies \cref{thm:quotient maps:zero} is also clear, as \(\NN_{E_\L} \cap \algalg{G} \subseteq \NN_{E_\L}\), which is the kernel of the left regular representation \(\redalg{G} \twoheadrightarrow \essalg{G}\).
\end{proof}

\section{Essential amenability and nuclearity of groupoid algebras} \label{sec:ess-ap}
We now wish to study when the \emph{essential left regular representation}
\[
    \Lambda_{E_\L} \colon \essmaxalg{G} \twoheadrightarrow \essalg{G} \;\; \text{(cf.\ \cref{cor:diagram})}
\]
is an isomorphism. When \(G\) is Hausdorff, the above property is usually called the \emph{weak containment property}. In the non-Hausdorff scenario (both ``reduced'' and ``essential'' cases) we define the following (recall \cref{cor:diagram}).
\begin{definition} \label{def:weak containment}
    An \'etale groupoid \(G\) has the:
    \begin{enumerate}[label=(\roman*)]
        \item \emph{weak containment property} if \(\fullalg{G} \cong \redalg{G}\) via the map \(\Lambda_P\); and
        \item \emph{essential containment property} if \(\essmaxalg{G} \cong \essalg{G}\) via the map \(\Lambda_{E_\L}\).
    \end{enumerate}
    The map \(\Lambda_P\) (resp.\ \(\Lambda_{E_\L}\)) is called the \emph{left regular representation} (resp.\ \emph{essential left regular representation}).
\end{definition}

Observe that the weak (resp.\ essential) containment property can be reformulated as follows: every representation (resp.\ essential representation) of $\fullalg G$ factors through $\redalg G$ (resp.\ $\essalg G$). Generally, this property is difficult to verify. Rather than examining the (essential) weak containment property independently, we will show that the (essential) amenability of $G$ implies it -- see \cref{sec:ess-amenability} below. The converse in the non-essential case (\emph{i.e.}\ for \(\redalg{G}\)), however, remains less understood: it does not hold in general and only applies under certain exactness assumptions (see \cites{Willett:Non-amenable, Kranz:weak-containment}).

\subsection{(Essential) amenability of groupoids}\label{sec:ess-amenability}
In the non-essential case (\emph{e.g.}, when \(G\) is Hausdorff), a key technical tool for proving that a groupoid \(G\) satisfies the weak containment property is its amenability. We now introduce this notion in the more general, non-Hausdorff setting. 

\begin{definition} \label{def:ame}
    Let \(G\) be an \'etale groupoid with Hausdorff unit space \(X \subseteq G\).
    We say that \(G\) is \emph{Borel amenable} if there is a net \((\xi_i)_i \sbe \borelalg{G}\) of non-negative functions such that
    \begin{enumerate}[label=(\roman*)]
        \item \label{def:ame:norm} \(\sup_{x \in X} \sum_{g \in Gx} \xi_i\left(g\right)^2 \leq 1\) for all \(i\); and
        \item \label{def:ame:inva} for every compact subset \(K \subseteq G\) we have
            \[
                \sup_{g \in K} \abs{\left(\xi_i^* * \xi_i\right)\left(g\right) - 1} \to 0.
            \]
    \end{enumerate}
    We say that \(G\) is \emph{(topologically) amenable} if we can find \((\xi_i)_i \subseteq \auxalg{G}\) (cf.\ \cref{def:borel-algebras}) satisfying the above conditions \cref{def:ame:norm} and \cref{def:ame:inva}.
\end{definition}

\begin{remark}\label{rem:mistakes}
   We are grateful to Xin Li for pointing out an error in our original definition of amenability, where we used the space $\algalg G$ instead of the larger space $\auxalg G$. This is closely related the fact that \(\algalg G\) is \emph{not} closed under pointwise multiplication. Consequently, the definition above might \emph{not} align with the standard notion of (topological) amenability when \(G\) is not Hausdorff, as appears in \cite{renault-2013}.

       To address this, we use \(\auxalg{G}\) instead of the smaller \(\algalg{G}\). The necessity of this distinction is best illustrated in \cref{ex:trivial-action-ame}~\cref{ex:trivial-action-ame:weak} and \cref{prop:schur multiplier}. Notably, in the latter case, the images of the pointwise multiplication maps lie outside \(\algalg{G}\), reinforcing the need for the use of the larger space $\auxalg G$. 

    We use this remark to point out a similar mistake appearing in~\cite{renault-2013}.
    More precisely, \cite{renault-2013}*{Proposition 2.8} is wrong as stated: the groupoid from \cref{ex:trivial-action-ame}  admits a topological approximate invariant density in the sense of \cite{renault-2013}*{Proposition 2.7}, but it has no topological approximate invariant mean in the sense of \cite{renault-2013}*{Proposition 2.6}.
The issue lies in a continuity condition appearing in \cite{renault-2013}*{Proposition 2.6 (i)}, which is too strong to require in general for non-Hausdorff groupoids. It is not clear to us whether the main result of \cite{renault-2013} is affected by these issues. 
\end{remark}

It is also natural to expect that an \emph{amenability-like} condition would also play a role in determining whether a groupoid has the essential containment property. The following definition should therefore be compared with \cref{def:ame}.

\begin{definition} \label{def:ess-ame}
Let \(G\) be an \'etale groupoid with Hausdorff unit space \(X \subseteq G\) and dangerous arrows $D$. 
Suppose \(G\) can be covered by countably many open bisections.
We say that \(G\) is \emph{Borel essentially amenable} if there exists a net \((\xi_i)_i \subseteq \borelalg{G}\) of non-negative functions such that
\begin{enumerate}[label=(\roman*)]
    \item \label{def:ess-ame:norm} \(\sup_{x \in X \setminus D} \sum_{g \in Gx} \xi_i\left(g\right)^2 \leq 1\) for all \(i\); and
    \item \label{def:ess-ame:inva} for every compact subset \(K \subseteq G\),
        \[
            \sup_{g \in K \setminus D} \abs{\left(\xi_i^* * \xi_i\right)\left(g\right) - 1} \to 0.
        \]
\end{enumerate}
Likewise, \(G\) is \emph{essentially amenable} if we can find a net \((\xi_i)_i \subseteq \auxalg{G}\) (cf.\ \cref{def:borel-algebras}) satisfying the above conditions.
\end{definition}

\begin{remark} \label{rem:ess-ame}
    The following are worthwhile remarks about \cref{def:ame,def:ess-ame}.
    \begin{enumerate}[label=(\roman*)]
        \item \label{rem:ess-ame:norm} Item \cref{def:ess-ame:norm} may be dropped by an appropriate renormalization in both \cref{def:ame,def:ess-ame} (see, \emph{e.g.}\ \cite{Brown-Ozawa:Approximations}*{Definition 5.6.13}).
        \item \label{rem:ess-ame:complicated space} In the case when \(G\) is Hausdorff, \cref{def:ess-ame} reduces to \cref{def:ame} because in this case \(D\) is empty. In this case \cref{def:ess-ame}~\cref{def:ess-ame:inva} may be restated as some uniform convergence over compact sets.
        \item In the non-Hausdorff case, however, one should not talk about compact sets of \(K \setminus D\) as this might be a badly behaved topological space. For instance, it may fail to be locally compact (cf.\ \cref{rem:d not loc comp}).
        \item We believe that \cref{def:ess-ame} should, in fact, \emph{not} be considered the ‘correct’ definition of essential amenability if \( G \) cannot be covered by countably many open bisections, as in this case \( D \subseteq G \) may not be meager. In such cases, \cref{def:ess-ame} should be modified to accommodate any choice of meager set. However, we do \emph{not} pursue this modification here for two reasons. First, the added technical complexity does not justify the limited applicability to niche cases (see \cref{rem:covered by cnt bisec}). Second, in typical settings where \( G \) \emph{is} covered by countably many open bisections, both definitions should coincide.
        \item \label{rem:ess-ame:compact in bisection} The compact set \(K \subseteq G\) in both \cref{def:ame}~\cref{def:ame:inva} and \cref{def:ess-ame}~\cref{def:ess-ame:inva} may be assumed to be a compact subset of an open bisection (in particular it may be assumed to be Hausdorff). Indeed, any compact set $K$ can be covered by finitely many open bisections $s_1,\ldots, s_n$ (cf.\ \cref{lemma:compact set is covered by fin many bisections}), from which it follows that $K\sbe K_1\cup\ldots\cup K_n$ with $K_i\sbe s_i$ defined as the closure of $s_i\cap K$ within $s_i$. Each $K_i$ is compact and contained in the open bisection $s_i$. Thus, the supremums in \cref{def:ame}~\cref{def:ame:inva} and \cref{def:ess-ame}~\cref{def:ess-ame:inva} can be restated as maximums over (uniformly) finitely many supremums with compact sets contained in some open bisections.
        \item \cref{thm:wk-cont-nuc} proves that Borel amenability and (topological) amenability are, in fact, equivalent, and similarly for the essential case. Thus, for the rest of the section we will just work with amenability/essential amenability.
    \end{enumerate}
\end{remark}

We point out the following basic properties.
\begin{proposition} \label{prop:ess-ame-basics}
Let $G$ be an \'etale groupoid with Hausdorff unit space \(X \subseteq G\).
\begin{enumerate}[label=(\roman*)]
  \item \label{prop:ess-ame-basics:1} If \(G\) is amenable then it is also essentially amenable.
  \item \label{prop:ess-ame-basics:2} If \(G\) is amenable then \(xGx\) is amenable for every \(x \in X\).
  \item \label{prop:ess-ame-basics:3} If \(G\) is essentially amenable then \(xGx\) is amenable for every \(x \in X \setminus D\).
\end{enumerate}
\end{proposition}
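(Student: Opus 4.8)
The plan is to treat \cref{prop:ess-ame-basics:1} as essentially formal and to prove \cref{prop:ess-ame-basics:2,prop:ess-ame-basics:3} simultaneously by a single computation that realizes each isotropy group $xGx$ inside the convolution $\xi_i^* * \xi_i$ of the approximating net. For \cref{prop:ess-ame-basics:1} I would simply observe that the two defining conditions of essential amenability (\cref{def:ess-ame}) are obtained from those of amenability (\cref{def:ame}) by restricting the relevant suprema from $X$ to $X\setminus D$ and from $K$ to $K\setminus D$. Hence any net $(\xi_i)_i\subseteq\algalg{G}$ witnessing amenability witnesses essential amenability verbatim, the countable-cover hypothesis under which essential amenability is defined being in force.

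The heart of the matter is the following identity. Fix $x\in X$, write $\Gamma\coloneqq xGx$ for the (discrete) isotropy group, and set $\eta_i\coloneqq\xi_i|_{Gx}\in\ell^2(Gx)$. Reindexing the convolution by substituting $g=h^{-1}$ in $(\xi_i^**\xi_i)(\gamma)=\sum_{hk=\gamma}\overline{\xi_i(h^{-1})}\xi_i(k)$ yields, for every $\gamma\in\Gamma$,
\[
    (\xi_i^**\xi_i)(\gamma)=\sum_{g\in Gx}\overline{\xi_i(g)}\,\xi_i(g\gamma)=\langle\eta_i,R_\gamma\eta_i\rangle,
\]
where $R_\gamma\in\B(\ell^2(Gx))$ is the unitary $(R_\gamma\eta)(g)\coloneqq\eta(g\gamma)$. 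Since $\gamma\mapsto R_\gamma$ is a unitary representation of $\Gamma$ coming from the free right action of $\Gamma$ on $Gx$, whose orbits are the fibers of $\rg|_{Gx}$ and are each isomorphic to $\Gamma$, the representation $R$ is a multiple of the right regular representation of $\Gamma$, and each $\phi_i(\gamma)\coloneqq\langle\eta_i,R_\gamma\eta_i\rangle$ is a positive-definite function on $\Gamma$.

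For \cref{prop:ess-ame-basics:2}, condition \cref{def:ame:norm} gives $\|\eta_i\|^2=\sum_{g\in Gx}|\xi_i(g)|^2\le 1$, while applying condition \cref{def:ame:inva} to the (compact) singletons $\{\gamma\}$ gives $\phi_i(\gamma)\to 1$ for every $\gamma\in\Gamma$; in particular $\|\eta_i\|^2=\phi_i(x)\to 1$. Using that $R_\gamma$ is unitary,
\[
    \|R_\gamma\eta_i-\eta_i\|^2=2\|\eta_i\|^2-2\operatorname{Re}\phi_i(\gamma)\longrightarrow 0
\]
for each $\gamma$, so after normalizing the $\eta_i$ we exhibit almost invariant unit vectors for $R$, hence for the regular representation of $\Gamma$ (weak containment of the trivial representation is unaffected by passing to multiples of $\rho$). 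Therefore $\Gamma$ is amenable.

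For \cref{prop:ess-ame-basics:3} the same computation applies, and the one genuinely non-formal point is to check that the essential conditions of \cref{def:ess-ame} still ``see'' the isotropy at a non-dangerous unit. This is supplied by \cref{lemma:dangerous arrows}: if $x\in X\setminus D$ then $\s(\gamma)=x\notin D$ forces $\gamma\notin D$ for every $\gamma\in xGx$, so $\{\gamma\}\setminus D=\{\gamma\}$ and $x\in X\setminus D$. Consequently condition \cref{def:ess-ame:norm} still bounds $\|\eta_i\|\le 1$ and condition \cref{def:ess-ame:inva}, applied to $K=\{\gamma\}$, still yields $\phi_i(\gamma)\to 1$, so the argument above goes through unchanged. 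The main obstacle is thus not the (standard) group-theoretic conclusion but precisely this bookkeeping: one must ensure that restricting all estimates to the complement of $D$ discards no isotropy data, which is exactly guaranteed by $D$ being an ideal subgroupoid detected on units.
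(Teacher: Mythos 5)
Your proposal is correct, but for the substantive parts \cref{prop:ess-ame-basics:2,prop:ess-ame-basics:3} it takes a genuinely different route from the paper. Both arguments start the same way: part \cref{prop:ess-ame-basics:1} is the same formal observation, and both proofs restrict attention to \(Gx\), use the identity \((\xi_i^**\xi_i)(\gamma)=\sum_{g\in Gx}\overline{\xi_i(g)}\,\xi_i(g\gamma)\) for \(\gamma\in xGx\), and use \cref{lemma:dangerous arrows} to ensure that, for \(x\in X\setminus D\), the whole isotropy group and the whole fiber avoid \(D\), so the essential conditions of \cref{def:ess-ame} applied to the compact singletons \(\{\gamma\}\) and \(\{x\}\) lose no information. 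The mechanisms then diverge. The paper decomposes \(Gx\) into the fibers of \(\rg|_{Gx}\), picks orbit representatives \(\{g_y\}_{y\in Y}\), and pushes \(\xi_i\) down to functions \(m_i(h)\coloneqq\sum_{y\in Y}\xi_i(g_yh)\) on \(xGx\), asserting that these directly witness amenability of the group in the sense of \cref{def:ame}; this keeps everything inside the Reiter-type framework of the paper, but the verification is left terse (note that \((m_i^**m_i)(\gamma)\) contains cross terms \(\sum_{h}\overline{\xi_i(g_yh)}\,\xi_i(g_{y'}h\gamma)\) with \(y\neq y'\), so it is \emph{not} literally equal to \((\xi_i^**\xi_i)(\gamma)\), which is only the diagonal part, and the \(\ell^2\)-normalization of \(m_i\) also requires the renormalization remark). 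You instead keep \(\eta_i\coloneqq\xi_i|_{Gx}\) in \(\ell^2(Gx)\), recognize \((\xi_i^**\xi_i)(\gamma)=\braket{\eta_i}{R_\gamma\eta_i}\) as a matrix coefficient of the right-translation representation \(R\), identify \(R\) as a multiple of the regular representation (the action of \(xGx\) on \(Gx\) is free with orbits the fibers of \(\rg|_{Gx}\)), and extract almost invariant unit vectors from \(2\|\eta_i\|^2-2\operatorname{Re}\,\phi_i(\gamma)\to0\); amenability then follows since weak containment of the trivial representation is insensitive to multiplicities (kernels in \(C^*(xGx)\) agree) together with Hulanicki's theorem. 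Your route imports standard representation-theoretic facts external to the paper, but in exchange it avoids the cross-term bookkeeping entirely and handles normalization cleanly; it is a complete and, on this point, arguably tighter argument than the one the paper records.
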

\begin{proof}
    Assertion \cref{prop:ess-ame-basics:1} is clear. Indeed, \cref{def:ess-ame} is on the nose a weakening of \cref{def:ame}.
    Likewise, assertion \cref{prop:ess-ame-basics:2} is well known. Moreover, as, in fact, the proof of \cref{prop:ess-ame-basics:2,prop:ess-ame-basics:3} are very similar, we will be content with proving \cref{prop:ess-ame-basics:3}.

    Let \(x \in X \setminus D\) be a non-dangerous unit, and let \((\xi_i)_{i}\) be as in \cref{def:ess-ame}. In particular, observe that it follows from \cref{def:ess-ame}~\cref{def:ess-ame:inva} (and \cref{lemma:dangerous arrows}) that for every finite \(K \subseteq xG\)
    \[
        \sup_{g \in K} \abs{\left(\xi_i^* * \xi_i\right) \left(g\right) - 1} = \sup_{g \in K} \abs{\sum_{h \in G \rg(g)} \overline{\xi_i\left(h\right)} \xi_i\left(gh\right) - 1} \to 0 \;\; \text{(in } \, i \text{).}
    \]
    Given \(g, h \in Gx\) we write \(g \sim_\mathcal{R} h\) whenever \(\rg(g) = \rg(h)\).
    Clearly \(\sim_{\mathcal{R}}\) is an equivalence relation, so let \(\{g_y\}_{y \in Y} \subseteq Gx\) be a set of representatives of \(Gx/\sim_\mathcal{R}\) (note that \(Y\) is, simply, the orbit of \(x\) in the unit space \(X \subseteq G\)).
    Then the maps
    \[
        m_i \colon xGx \to \CCC, \;\; m_i\left(h\right) \coloneqq \sum_{y \in Y} \xi_i\left(g_yh\right).
    \]
    witness the amenability of \(xGx\), since the invariance conditions \cref{def:ess-ame}~\cref{def:ess-ame:inva} for \(\xi_i\) and \(m_i\) are precisely the same.
\end{proof}

The following remark basically states that essential amenability is a \emph{badly} behaved notion when it comes to quotients, and yet it is well behaved when it comes to ideals.
This is not surprising at all, since it is well known that essential crossed products are \emph{not} functorial (see \cite{KwasniewskiMeyer-essential-cross-2021}*{Remark 4.8}). Indeed, the problem derives from the obvious fact that closed sets may be meager, and thus any behaviour that ignores meager sets (such as essential amenability in the sense of \cref{def:ess-ame}) will ignore those meager closed sets as well.
On the contrary, open sets are never meager (unless they are empty) and thus behaviour in \(G\) \emph{is} inherited within any (non-empty) open set.
\begin{remark}
    \begin{enumerate}[label=(\roman*)]
        \item Essential amenability does \emph{not} pass to quotients or isotropy groups $xGx$ of dangerous points $x\in D$, in general. Indeed, let \(G \coloneqq X \rtimes S\) be as in \cref{ex:trivial-action} with \(\Gamma\) non-amenable. Then \(G\) is always essentially amenable (as noted in \cref{ex:trivial-action-ame} below), but \(x_0Gx_0 \cong \Gamma\) is \emph{not} amenable.
        \item On the other hand, whenever \(U \subseteq X\) is some \(G\)-invariant open subset and \(G\) is essentially amenable, then so is \(G_U \coloneqq UGU\). Indeed, let \((\xi_i)_{i}\) be as in \cref{def:ess-ame}. We may then consider \(\zeta_i \coloneqq \xi_i|_{G_U}\). It is routine to show that \((\zeta_i)_{i}\) witness the essential amenability of \(G_U\).
    \end{enumerate}
\end{remark}

Heuristically, \cref{def:ess-ame} says that \(G\) is amenable outside of the dangerous arrows.\footnote{\, This does \emph{not} mean saying that the restriction of \(G\) to \(X \setminus D\) is amenable. Indeed, \(X \setminus D\) may be a badly behaved topological space, cf.\ \cref{rem:ess-ame} \cref{rem:ess-ame:complicated space}.} This behaviour is best exemplified by the following.
\begin{example} \label{ex:trivial-action-ame}
    Let \(G \coloneqq X \rtimes S\) for $S\coloneqq\Gamma\cup \{0\}$ acting on $X$ as in \cref{ex:trivial-action}. We already notice in this example that $\algalg G$ consists of functions $G\to \C$ that are continuous on $O=X\backslash\{x_0\}$ and satisfying the limit condition~\eqref{eq:lim-condition-functions-C_c(G)}. On the other hand, the auxiliary space $\auxalg G$ is, indeed, larger and consists of all bounded functions $G\to \C$ that are continuous on $O$ and attain only finitely many nonzero values on $\Gamma$ (so that they are compactly supported). Indeed, multiplying (pointwise) two continuous functions supported on basic bisections of the form $O_\gamma:=O\sqcup \{\gamma\}\cong X$ for $\gamma\in \Gamma$, we get in $\auxalg G$ all continuous functions on $O$ that are zero on $\Gamma$. Taking linear combinations of those functions with continuous functions on $O_\gamma$, we get all compactly supported functions $G\to \C$ that are continuous on $O$. Regarding (essential) amenability of $G$, we note the following:   
    \begin{enumerate}[label=(\roman*)]
        \item \label{ex:trivial-action-ame:weak} \(G\) is amenable if and only if \(\Gamma\) is amenable (since \(O \subseteq X\) is a \emph{proper} open set).
        Indeed, if $\Gamma$ is amenable and \((F_n)_n\) denotes a F\o lner sequence for \(\Gamma\), then the functions
        \[
            \xi_n \colon G \to \C, \; g \mapsto \left\{\begin{array}{rl} 1/\abs{F_n}^{1/2} & \text{if }  g \in F_n, \\
            0 & \text{if } g \in \Gamma\backslash F_n, \\
            1 & \text{otherwise,} \end{array} \right. 
        \]
        can be seen to witness the amenability of \(G\). Notice that $\xi_n\sbe \auxalg G$, and also \(\xi_n^2 \in \algalg{G}\) (but $\xi_n\notin \algalg G$ in general), as
        \[
            \xi_n^2 = \sum_{\gamma \in F_n} 1/\abs{F_n} \cdot v_\gamma,
        \]
        which is contained in \(\algalg{G}\) by definition of the latter. It is not clear to us, in general, whether we can always find functions in $\auxalg G$ with squares in $\algalg G$ witnessing the amenability of an (amenable) groupoid $G$.
        But it definitely is not possible to get functions $\xi_n \in \algalg G$ witnessing the amenability of \(G\) for the groupoid above, so that in \cref{def:ame} we cannot replace $\auxalg G$ by $\algalg G$, in general. Indeed, take any net $(\xi_n)_n\sbe \algalg G$ satisfying the conditions of \cref{def:ame}. Define $\varphi_n:=\xi_n^**\xi_n\in \algalg G$. We have
        $$\varphi_n(x)=\sum_{g\in Gx}\xi_n(g)^2\quad \text{ for all } x\in X,$$
        and $x\mapsto \varphi_n(x)$ is a continuous function on $X$ (this is the continuity of the counting measures on $Gx$, which forms a Haar system for an \'etale groupoid).
        By \cref{def:ame}~\cref{def:ame:norm}, $\varphi_n(x)\to 1$ uniformly on $X$. On the other hand, 
        $$\varphi_n(x_0)=\sum_{\gamma\in \Gamma}\xi_n(g)^2$$
        and each $\xi_n(g)\to 1$. This is a contradiction unless $\Gamma$ is the trivial group.
        \item \label{ex:trivial-action-ame:ess} \(G\) is always essentially amenable (since \(O \subseteq X\) is dense), regardless of \(\Gamma\). Indeed, we may simply take \(\xi_i \coloneqq 1_X \in \algalg{G}\).
        Since \(D \cap X = X \setminus O\), it follows that \cref{def:ess-ame}~\cref{def:ess-ame:inva}, for instance, reduces to
        
        \[
            \sup_{g \in K \setminus D} \abs{\left(\xi_i^* * \xi_i\right)\left(g\right) - 1} \leq \sup_{g \in G O} \abs{1_X\left(g\right) - 1} = \sup_{x \in O = XO} \abs{1_X\left(x\right) - 1} = 0.
        \]
        Similar computations show that \cref{def:ess-ame}~\cref{def:ess-ame:norm} is also met.
    \end{enumerate}
\end{example}

We now point out the following example, see \cite{exel-starling:2017:amen-actions} for the motivation behind it.
\begin{example}
    Essential amenability does \emph{not} pass to preimages of \emph{\(d-\)bijective} groupoid homomorphisms \(\varphi \colon G \to H\) (cf.\ \cite{exel-starling:2017:amen-actions}*{Proposition 2.4}), as meager sets can have non-meager preimages. For instance, let \(\Gamma\) be some non-amenable group, and let \(S \coloneqq \Gamma \sqcup \{e_n\}_{n \in \N}\), where \(\{e_n\}_{n \in \N}\) are idempotents such that
    \begin{enumerate}[label=(\roman*)]
        \item \(e_n < e_{n+1} < e\) for all \(n \in \N\) (here \(e \in \Gamma\) is the identity element); and
        \item \(e_n \gamma = \gamma e_n = e_n\) for all \(\gamma \in \Gamma\) and \(n \in \N\).
    \end{enumerate}
    Let \(\alpha \colon S \curvearrowright [0,2]\) be the trivial action, where the domain of \(e_n\) is \([0,1)\) for all \(n \in \N\). Likewise, let \(\beta \colon S \curvearrowright [0,1]\) be also trivial, where the domain of \(e_n\) is \([0,1)\) as well.
        Then the map
        \[
            \varphi \colon \left[0,2\right] \rtimes_\alpha S \to \left[0,1\right] \rtimes_\beta S, \;\; \varphi\left(\left[s, x\right]\right) \coloneqq
                \left\{
                    \begin{array}{rl}
                        \left[s, x\right] & \text{if } x \in [0, 1], \\
                        \left[s, 1\right] & \text{otherwise,}
                    \end{array}
                \right. 
        \]
        is \(d\)-bijective in the sense of \cite{exel-starling:2017:amen-actions}*{Definition 2.2}, \([0,1] \rtimes_\beta S\) is essentially amenable (cf.\ \cref{ex:trivial-action-ame}), whereas \([0,2] \rtimes_\alpha S\) is not.

        As a consequence, contrary to the amenable case, cf.\ \cite{exel-starling:2017:amen-actions}*{Theorem 3.3}, it is \emph{not} true that Paterson's universal groupoid of an inverse semigroup \(S\) is essentially amenable if and only if \(X \rtimes S\) is essentially amenable for all actions \(S \curvearrowright X\), where \(X\) is some locally compact Hausdorff space. 
\end{example}

\begin{lemma} \label{prop:ame reduces hausdorff case}
    If \(G\) is a Hausdoff \'etale groupoid, then \(\auxalg{G} = \algalg{G}\).
    In particular, if \(G\) is Hausdorff then \cref{def:ame,def:ess-ame} reduce to the usual definition of amenability for Hausdorff groupoids.
\end{lemma}

\subsection{Essential weak containment property and nuclearity} \label{sec:wk-cont}
We are now ready to state the main theorems of the paper (cf.\ \cref{thm-intro}).
The following is the ``reduced'' part of \cref{thm-intro}, whereas \cref{thm:wk-cont-nuc-ess} is the ``essential'' one.
We divide them for the convenience of the reader. \cref{thm:wk-cont-nuc} particularly should be compared with \cite{brix-gonzalez-hume-li-2025}*{Corollary 6.10}.
\begin{theorem} \label{thm:wk-cont-nuc}
    Let \(G\) be an \'etale groupoid with locally compact Hausdorff unit space that can be covered by countably many open bisection.
    Consider the assertions.
    \begin{enumerate}[label=(\roman*)]
        \item \label{thm:wk-cont-nuc:n} \(\redalg{G}\) is nuclear.
        \item \label{thm:wk-cont-nuc:a} \(G\) is amenable.
        \item \label{thm:wk-cont-nuc:bn} \(\redborel G\) is nuclear.
        \item \label{thm:wk-cont-nuc:ba} \(G\) is Borel amenable.
        \item \label{thm:wk-cont-nuc:bwc} \(\Lambda_P^{\rm B} \colon \fullborel G \twoheadrightarrow \redborel G\) is injective.
        \item \label{thm:wk-cont-nuc:wc} \(\Lambda_P \colon \fullalg G \twoheadrightarrow \redalg G\) is injective.
    \end{enumerate}
    Then \cref{thm:wk-cont-nuc:n} \(\Leftrightarrow\) \cref{thm:wk-cont-nuc:a} \(\Leftrightarrow\) \cref{thm:wk-cont-nuc:bn} \(\Leftrightarrow\) \cref{thm:wk-cont-nuc:ba} \(\Rightarrow\) \cref{thm:wk-cont-nuc:bwc} \(\Rightarrow\) \cref{thm:wk-cont-nuc:wc}.
\end{theorem}

\begin{theorem} \label{thm:wk-cont-nuc-ess}
    Let \(G\) be an \'etale groupoid with locally compact Hausdorff unit space that can be covered by countably many open bisections.
    Consider the assertions.
    \begin{enumerate}[label=(\roman*)]
        \item \label{thm:wk-cont-nuc-ess:n} \(\essalg{G}\) is nuclear.
        \item \label{thm:wk-cont-nuc-ess:a} \(G\) is essentially amenable.
        \item \label{thm:wk-cont-nuc-ess:bn} \(\essborel G\) is nuclear.
        \item \label{thm:wk-cont-nuc-ess:ba} \(G\) is Borel essentially amenable.
        \item \label{thm:wk-cont-nuc-ess:bwc} \(\Lambda_{E_\L}^{\rm B} \colon \essmaxborel G \twoheadrightarrow \essborel G\) is injective.
        \item \label{thm:wk-cont-nuc-ess:wc} \(\ker(\essmaxalg G \twoheadrightarrow \essalg G) = \ker(\essmaxalg G \to \essmaxborel G)\).
    \end{enumerate}
    Then \cref{thm:wk-cont-nuc-ess:n} \(\Rightarrow\) \cref{thm:wk-cont-nuc-ess:a} \(\Rightarrow\) \cref{thm:wk-cont-nuc-ess:bn} \(\Leftrightarrow\) \cref{thm:wk-cont-nuc-ess:ba} \(\Rightarrow\) \cref{thm:wk-cont-nuc-ess:bwc} \(\Rightarrow\) \cref{thm:wk-cont-nuc-ess:wc}. Moreover, if the homomorphism
    $$\psi\colon \essmaxalg G \to \essmaxborel G$$
    is max-injective, then all conditions \cref{thm:wk-cont-nuc-ess:n}-\cref{thm:wk-cont-nuc-ess:ba} are equivalent.
\end{theorem}

\begin{remark}
    It remains unclear whether $\psi$ is always max-injective and whether essential amenability alone (see \cref{def:ess-ame}) is sufficient to guarantee the essential containment property \(\essmaxalg{G} = \essalg{G}\) and/or the nuclearity of one of these \cstar{}algebras. The underlying difficulty is similar to that discussed in \cref{remark:problem extending to essential}: there may exist essential representations \(\pi \colon \algalg{G} \to \B(H)\) that do not extend to representations of \(\borelessalg{G}\). As a consequence, \(\essmaxalg{G}\) may fail to embed into \(\essmaxborel{G}\).  
    In fact, even under the assumption of amenability it is not clear to us whether \(\essmaxalg{G} = \essalg{G}\) in general. Nonetheless, we can prove that it holds under the following additional condition:
    \begin{equation} \label{eq:extra condition-ess}
        \overline{\NN_{E_\L}^c}^{\|\cdot\|_\red} = \left\{ x \in \redalg{G} \mid E_\L(x^*x) = 0 \right\} = \NN_{E_\L}.
    \end{equation}
    Indeed, if \(G\) is amenable, then \(\|\cdot\|_\max = \|\cdot\|_\red\) on \(\algalg G\) by \cref{thm:wk-cont-nuc}. It follows that the ideal \(\NN_{E_\L}^\max = \overline{\NN_{E_\L}^c}^{\|\cdot\|_\max}\), as defined in \cref{prop:description-ess-max-algebra-quotient}, coincides with \(\overline{\NN_{E_\L}^c}^{\|\cdot\|_\red} = \NN_{E_\L}\), by \eqref{eq:extra condition-ess}. This implies that the quotient map
    \[
    \essmaxalg G = \fullalg G / \NN_{E_\L}^\max \longrightarrow \redalg G / \NN_{E_\L} = \essalg G
    \]
    is an isomorphism.
    It remains an open question whether there exist groupoids that do \emph{not} satisfy condition \eqref{eq:extra condition-ess} (see \cite{brix-gonzalez-hume-li-2025}*{Questions 4.11}, for instance).
\end{remark}

Generally speaking, the proofs of \cref{thm:wk-cont-nuc,thm:wk-cont-nuc-ess} follow the same strategy, but the latter is technically more involved.
In particular, we will sometimes be sketchy with the proofs of the former theorem, but explicit with the proofs of the latter one.
Moreover, the proofs are rather lengthy and technical, and the ``forward'' implications\footnote{\, Namely the fact that Borel amenability (resp.\ Borel essential amenability) implies \(\redalg{G}\) (resp.\ \(\essalg{G}\)) is nuclear.} hinge on the use of specific \emph{Herz-Schur multipliers}, constructed in the following proposition (recall the notation from \cref{sec:aux-alg}).

\begin{proposition} \label{prop:schur multiplier}
    Let \(\xi \in \borelalg{G}\), and \(\varphi \coloneqq \xi^* * \xi \in \borelalg G\) with $\|\varphi\|_\infty\leq 1$. Then the pointwise multiplication map
    \[
        m_\varphi \colon \borelalg{G} \to \borelalg{G}, \;\; m_\varphi\left(a\right)\left(g\right) \coloneqq \varphi\left(g\right) \cdot a \left(g\right)
    \]
    induces c.c.p. maps
    \begin{enumerate}[label=(\roman*)]
        \item \label{prop:schur multiplier:max-p} \(m_{\varphi, \rm max}^{P} \colon \fullborel{G} \to \fullborel{G}\);
        \item \label{prop:schur multiplier:red-p} \(m_{\varphi, \rm red}^{P} \colon \redborel{G} \to \redborel{G}\);
        \item \label{prop:schur multiplier:max-e} \(m_{\varphi, \rm max}^{E_\L} \colon \essmaxborel{G} \to \essmaxborel{G}\); and
        \item \label{prop:schur multiplier:red-e} \(m_{\varphi, \rm red}^{E_\L} \colon \essborel{G} \to \essborel{G}\).
    \end{enumerate}
\end{proposition}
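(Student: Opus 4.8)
The plan is to derive all four maps from a single mechanism, namely the positive--definiteness of $\varphi=\xi^**\xi$, and to show along the way that $m_\varphi$ is completely positive with completely bounded norm equal to $\sup_{x\in X}\varphi(x)=\sup_{x\in X}\sum_{g\in Gx}|\xi(g)|^2$ (so contractive under the normalisation $\sup_{x\in X}\varphi(x)\le 1$ of \cref{def:ame}). The key computation is the factorisation of the kernel attached to $\varphi$. Since $\lambda_x$ is a $*$-representation, $\lambda_x(\varphi)=\lambda_x(\xi)^*\lambda_x(\xi)$, and reading off matrix coefficients in the basis $\{\delta_h:h\in Gx\}$ gives, for $x\in X$ and $k,h\in Gx$,
\[
    \varphi(kh^{-1})=\sum_{q\in Gx}\overline{\xi(qk^{-1})}\,\xi(qh^{-1})=\langle P_x(k),P_x(h)\rangle ,
\]
where $P_x(h)\in\ell^2(Gx)$ is the vector $q\mapsto\xi(qh^{-1})$; moreover $\|P_x(h)\|^2=\varphi(\rg(h))\le\sup_{x\in X}\varphi(x)$. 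Thus $(k,h)\mapsto\varphi(kh^{-1})$ is a uniformly bounded positive--definite kernel on each fibre $Gx$. I also record that $m_\varphi$ genuinely maps $\algalg G$ into itself, since $\varphi\in\algalg G$ and the pointwise product of two functions continuous on open bisections is again such; and $\supp(m_\varphi(a))\subseteq\supp(a)$, so $m_\varphi$ preserves the singular ideal and descends to $\essalgalg G$.

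For the reduced map I would use the regular representation of \cref{prop:rep red and ess calgs:red}. Writing the matrix of $\lambda_x(a)$ as $[\lambda_x(a)]_{k,h}=a(kh^{-1})$, the displayed factorisation shows that $\lambda_x(m_\varphi(a))$ is exactly the Schur multiplier of $\lambda_x(a)$ with symbol $\varphi(kh^{-1})$. Assembling the fibres, the block--diagonal kernel $\tilde\psi\big((k,x),(h,y)\big)=\delta_{x,y}\,\varphi(kh^{-1})$ on $\bigoplus_{x\in X}\ell^2(Gx)$ is again positive--definite with the same uniform bound, hence defines a completely positive Schur multiplier $S_{\tilde\psi}$ on $\B\big(\bigoplus_{x\in X}\ell^2(Gx)\big)$ of completely bounded norm $\sup_{x\in X}\varphi(x)$. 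Since $S_{\tilde\psi}(\Lambda(a))=\Lambda(m_\varphi(a))$ and $\Lambda(\algalg G)$ is dense in $\redalg G$, the restriction of $S_{\tilde\psi}$ is the sought c.c.p.\ map on $\redalg G$. The essential--reduced map is obtained by the identical argument, restricting all indices to $x\in X\setminus D$ and using the faithful representation $\Lambda^\ess$ of $\essalg G$ from \cref{prop:rep red and ess calgs:ess}.

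The two maximal maps are the main obstacle, precisely because there is no single concrete Hilbert space on which to run a Schur multiplier; instead I would verify the universal property directly. Fix a representation $\pi\colon\algalg G\to\B(\Hilb)$ and, using \cref{lem:extension-rep-to-borel}, extend it to $\tilde\pi\colon\borelc(G)\to\B(\Hilb)$. The goal is a Stinespring dilation $\pi(m_\varphi(a))=T^*\,\Pi(a)\,T$ in which $\Pi$ is a representation of $\algalg G$ and $T$ a bounded operator with $\|T\|^2\le\sup_{x\in X}\varphi(x)$; then $\|\pi(m_\varphi(a))\|\le\sup_{x\in X}\varphi(x)\,\|a\|_{\max}$, and the Stinespring form yields complete positivity, so $m_\varphi$ extends to a c.c.p.\ map on $\fullalg G$. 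Concretely, $\Pi$ is the amplification of $\tilde\pi$ along the left regular action on an ancillary $\ell^2$--fibre and $T$ is the ``creation'' operator assembled from $\xi$, modelled on the classical dilation of the Herz--Schur multiplier $m_{\xi^**\xi}$ in the group case. The delicate point --- and the reason \cref{lem:extension-rep-to-borel} is indispensable --- is that $T$ and $\Pi$ are built from the individual bisection pieces of $\xi$ and from indicator functions of Borel (not open) subsets of $G$, whose $\pi$--images only make sense through the Borel extension $\tilde\pi$ and its representing measures $\mu_{\xi,\eta}$; the group sum $\sum_q$ must here be replaced by integration against $\mu_{\xi,\eta}$.

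Finally, for the essential--maximal map one runs the same construction but starting from an \emph{essential} representation $\pi$. By \cref{lem:extension-rep-to-borel} its extension $\tilde\pi$ annihilates $\meagerc(G)$, so the dilating representation $\Pi$ is again essential and the resulting estimate is by $\|a\|_{\ess,\max}$ rather than $\|a\|_{\max}$; taking the supremum over essential representations then extends $m_\varphi$ to a c.c.p.\ map on $\essmaxalg G$. I expect the genuinely technical step to be the bookkeeping that turns $(\Pi,T)$ into an honest dilation over the arrows of $G$, i.e.\ checking that the measure-theoretic assembly of $T$ from $\xi$ reproduces the pointwise multiplier $m_\varphi$ on the nose; everything else (complete positivity, the norm bound, and descent to the essential quotients) follows formally from the kernel factorisation above together with \cref{prop:rep red and ess calgs:red,prop:rep red and ess calgs:ess} and \cref{lem:extension-rep-to-borel}.
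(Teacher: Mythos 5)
Your treatment of the two reduced cases \cref{prop:schur multiplier:red-p,prop:schur multiplier:red-e} is correct and is essentially the paper's own argument: the fibrewise factorisation \(\varphi(kh^{-1})=\langle P_x(k),P_x(h)\rangle\) exhibits \(\lambda_x(m_\varphi(a))\) as the Schur multiplier of \(\lambda_x(a)\) by a positive-definite kernel, and faithfulness of \(\oplus_{x\in X}\lambda_x\) on \(\redalg{G}\) (resp.\ of \(\oplus_{x\in X\setminus D}\lambda_x\) on \(\essalg{G}\), combined with your observation that \(\supp(m_\varphi(a))\subseteq\supp(a)\), so that \(m_\varphi\) descends to \(\essalgalg{G}\)) finishes these two items exactly as in the paper.

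The maximal cases \cref{prop:schur multiplier:max-p,prop:schur multiplier:max-e}, however, contain a genuine gap: the dilation \((\Pi,T)\) with \(\pi(m_\varphi(a))=T^*\Pi(a)T\) is never constructed, and constructing it is not ``bookkeeping'' --- it is the entire content of these two items. Your kernel factorisation is a statement about matrix coefficients in the concrete representations \(\lambda_x\); in the universal setting there are no fibres from which to read coefficients, so nothing about the maximal cases ``follows formally'' from it. Concretely, to assemble your creation operator \(T\) from \(\xi\) you must decompose vectors of \(\Hilb\) along the unit space (disintegrate \(\pi|_{\contc(X)}\), or set up a \(\contz(X)\)-balanced tensor product with the bundle \((\ell^2(Gx))_x\)), and to see that your \(\Pi\) is multiplicative and dominated by the max (resp.\ essential-max) norm you would need a groupoid Fell-absorption-type statement; both points are delicate already for Hausdorff groupoids and are precisely where non-Hausdorffness bites. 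The paper avoids all of this by a representation-free argument: it proves that \(m_\varphi\) is \emph{completely algebraically positive} via the convolution identity \(m_\varphi(a_i^**a_j)=\bigl(\sum_k a_{i,k}\bigr)^**\bigl(\sum_k a_{j,k}\bigr)\), where \(a_{i,k}(g)=a_i(g)\,\xi_k(\s(g))\) and \(\xi=\sum_k\xi_k\) is the decomposition of \(\xi\) over \emph{disjointified Borel} bisections covering its compact support; this identity lives in \(\borelc(G)\), which is exactly why \cref{lem:extension-rep-to-borel} is needed (to apply the Borel extension \(\tilde\pi\) to the pieces \(a_{i,k}\) and witness positivity). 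The Stinespring construction is then run on \(\essalgalg{G}\odot\Hilb\) (resp.\ \(\algalg{G}\odot\Hilb\)) with semi-inner product \(\braket{a\otimes v}{b\otimes w}=\braket{v}{\pi(m_\varphi^{E_\L}(a^**b))w}\), boundedness of left multiplication is checked on bisection-supported elements, and the cutoff \(e\in\contc(X)\) with \(m_\varphi(e^*ae)=m_\varphi(a)\) yields \(m_\varphi^{E_\L}=V^*\rho(\cdot)V\) with \(Vv=e\otimes v\). If you want to complete your route, this algebraic factorisation is the missing ingredient replacing your fibrewise one; without it (or an honest construction of \((\Pi,T)\)), items \cref{prop:schur multiplier:max-p,prop:schur multiplier:max-e} remain unproven.
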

\begin{proof}
    We start by noting that \(m_\varphi\) does define a map from \(\borelalg{G}\) to \(\borelalg{G}\).
    Indeed, this is clear as $\borelalg G$ is closed under pointwise multiplication and $\varphi\in \borelalg G$.
    
    The ``reduced'' cases \cref{prop:schur multiplier:red-p,prop:schur multiplier:red-e} are proved both in a similar way, the only difference being in which set one takes the supremum (\(X\) for \cref{prop:schur multiplier:red-p} and \(X \setminus D\) for \cref{prop:schur multiplier:red-e}, cf.\ \cref{prop:rep red and ess calgs:ess}). In fact, given some fixed \(x \in X\), the map
    \begin{align*}
        m_\varphi^x \colon \B\left(\ell^2\left(Gx\right)\right) & \to \B\left(\ell^2\left(Gx\right)\right), \;\; m_\varphi^x\left(\left[t_{g, h}\right]_{g, h \in Gx}\right) & \coloneqq \left[\varphi\left(gh^{-1}\right) \cdot t_{g, h}\right]_{g, h \in Gx}
    \end{align*}
    is contractive and completely positive by \cite{Brown-Ozawa:Approximations}*{Theorem D.3}. Moreover, the following holds.
    \begin{claim} \label{claim:schur multiplier behaves with lambdax}
        \(\lambda_x(m_\varphi(a)) = m_\varphi^x (\lambda_x(a))\) for every \(a \in \borelalg{G}\) and \(x \in X\).
    \end{claim}
    \begin{proof}
        Let \(h \in Gx\). Observe that
        \begin{align*}
            \lambda_x\left(m_\varphi\left(a\right)\right) \delta_h & = \sum_{g \in G \rg(h)} \left(m_\varphi \left(a\right)\right)\left(g\right) \delta_{gh} = \sum_{g \in G \rg(h)} \left(\xi^* * \xi\right) \left(g\right) a\left(g\right) \delta_{gh}.
        \end{align*}
        Likewise,
        \begin{align*}
            m_\varphi^x\left(\lambda_x\left(a\right)\right) \delta_h & = \sum_{g_1 \in Gx} \varphi\left(g_1 h^{-1}\right) \langle \lambda_x\left(a\right) \delta_h, \delta_{g_1} \rangle \delta_{g_1} \\
            & = \sum_{g_1 \in Gx} \left(\xi^* * \xi\right)\left(g_1 h^{-1}\right) a\left(g_1h^{-1}\right) \delta_{g_1}.
        \end{align*}
        By making a change of variables, where \(g_1h^{-1} = g\), the claim follows.
    \end{proof}

    \begin{claim} \label{claim:multiplier descends}
        \(m_\varphi\) descends to a map \(m_{\varphi}^{E_\L} \colon \borelessalg{G} \to \borelessalg G\).
    \end{claim}
    \begin{proof}
        Simply observe that \(\supp(m_\varphi(a)) \subseteq \supp(a)\). Thus, should the latter be contained in \(D\) then so is the former.
     \end{proof}

    Now statements \cref{prop:schur multiplier:red-p,prop:schur multiplier:red-e} follow. Indeed, by construction
    \(\oplus_{x \in X} \lambda_x\) is a faithful representation of \(\redborel{G}\), proving assertion \cref{prop:schur multiplier:red-p} in the statement when combined with \cref{claim:schur multiplier behaves with lambdax}. Likewise, assertion \cref{prop:schur multiplier:red-e} follows since \(\oplus_{x \in X \setminus D} \lambda_x\) descends to a faithful representation of \(\borelessalg{G}\).

    \

    We now prove assertion \cref{prop:schur multiplier:max-e}, as \cref{prop:schur multiplier:max-p} is an easier version of this. We follow a similar argument as the one in \cite{Brown-Ozawa:Approximations}*{Proposition 5.1.16} but adapted to the essential non-Hausdorff setting.
    More precisely, observe that \cref{claim:multiplier descends} yields that the diagram
    \begin{center}
        \begin{tikzcd}[scale=50em,row sep=large, column sep=large]
            \Mat_n\left(\borelalg{G}\right) \arrow{r}{\id_n \otimes m_\varphi} \arrow[two heads]{d}{\id_n \otimes \Lambda^{\rm ess}_{\rm B}} & \Mat_n\left(\borelalg{G}\right) \arrow{d}{\id_n \otimes \Lambda^{\rm ess}_{\rm B}} \\
            \Mat_n\left(\borelessalg{G}\right) \arrow{r}{\id_n \otimes m_\varphi^{E_\L}} & \Mat_n \left(\borelessalg{G}\right)
        \end{tikzcd}
    \end{center}
    commutes for all \(n \in \N\), where \(\Lambda^{\rm ess}_{\rm B} \colon \borelalg{G} \twoheadrightarrow \borelessalg{G}\) is the canonical quotient map.\footnote{Recall that \(\Lambda_{\rm B}^\ess\) is the quotient map killing \(I_D\), cf.\ \cref{def:borel-algebras}, which might be strictly smaller than all Borel functions whose supports are contained in \(D\).}

    The goal now is to prove that this map \(m_\varphi\) is \emph{completely algebraically positive}, in the sense that its amplification to matrices \(\Mat_n(\borelalg G)\to \Mat_n(\borelalg{G})\) preserves \emph{algebraically positive elements} for all \(n \in \N\). Then it will also follow that $m_\varphi^{E_\L}$ is completely algebraically positive.
    Here, an \emph{algebraically positive element} is a matrix that is a finite sum of matrices of the form \((a_i^*a_j)_{i,j = 1}^n \in \Mat_n(A)\), where \((a_i)_{i = 1}^n \in A^n\) and $A$ is some \Star{}algebra.
    
    \begin{claim} \label{claim:multiplier comp pos}
        Both \(m_\varphi \colon \borelalg{G} \to \borelalg G\) and \(m_\varphi^{E_\L} \colon \borelessalg{G} \to \borelessalg G\) are completely algebraically positive.
    \end{claim}
    \begin{proof}
        We will first prove the claim for
        \[
            m_\varphi \colon \borelalg{G} \to \borelalg{G},
        \]
        and then apply \cref{claim:multiplier descends} in order to prove the analogous statement for \(m_\varphi^{E_\L}\).

        Since \(\xi \in \borelalg{G}\) there is some decomposition \(\xi = \rho_1 v_{s_1} + \dots + \rho_p v_{s_p}\), where \(s_1, \dots, s_p\) are open bisections in \(G\) and \(\rho_k\) is a Borel compactly supported function \(\rho_k \colon s_k s_k^* \to \C\). 
        In particular, let \(C_k \coloneqq \overline{\supp}(\rho_k)\), which is a compact set in \(s_k s_k^*\).
        We now iteratively construct projections \(p_1, \dots, p_q\) as follows.
        \begin{itemize}
            \item Let \(p_1 \colon s_1 \to \C\) be the characteristic function of \(C_1 s_1\), which is a compact subset of \(s_1\). In particular, \(p_1\) is Borel and compactly supported (as a function from \(s_1\)), so \(p_1 \in \borelalg{G}\).
            \item Given \(p_1, \dots, p_k\), let \(p_{k+1} \colon s_{k+1} \to \C\) be the characteristic function of \(C_{k+1} s_{k+1} \setminus (C_1 s_1 \cup \dots \cup C_k s_k)\). Observe that \(p_{k+1} \in \borelalg{G}\).
        \end{itemize}
        Consider the decomposition
        \[
            \xi\left(g\right) = \sum_{k = 1}^q \xi_k\left(\s(g)\right),
        \]
        where
        \[
            \xi_k\left(\s(g)\right) \coloneqq \xi\left(g\right) \cdot p_k\left(g\right) \text{ for all } k = 1, \dots, q.
        \]
        Notice that \(\xi_k\) is a well-defined element of $\borelalg G$ since \(p_k\) is supported only on the Borel bisection \(s_k\). 
        Furthermore,
        \[
            \varphi\left(g\right) = \left(\xi^* * \xi\right)\left(g\right) = \sum_{h \in G \rg(g)} \overline{\xi\left(h\right)} \xi\left(hg\right) = \sum_{k, l = 1}^q \overline{\xi_{k}\left(\rg(g)\right)} \xi_{l}\left(\s(g)\right)
        \]
        for all \(g \in G\).
        It then follows that for all \((a_i)_{i = 1}^n \in (\algalg{G})^n\) and \(g \in G\),
        \begin{align}
            m_{\varphi}\left(a^*_i * a_j\right)\left(g\right) & = \varphi\left(g\right) \cdot \left(a^*_i * a_j\right)\left(g\right) = \left(\xi^* * \xi\right)\left(g\right) \sum_{h \in G\rg(g)} \overline{a_i\left(h\right)} a_j\left(hg\right) \nonumber \\
            & = \sum_{k, l = 1}^q \sum_{h \in G\rg(g)} \overline{a_i\left(h\right) \xi_k\left(\rg(g)\right)} a_j\left(hg\right) \xi_{l}\left(\s(g)\right) \nonumber \\
            & = \sum_{k, l = 1}^q \sum_{h \in G\rg(g)} \overline{a_{i,k}\left(h\right)} a_{j,l}\left(hg\right) \nonumber \\
            & = \left(\left(\sum_{k = 1}^q a_{i,k}^* \right) * \left(\sum_{k = 1}^q a_{j,k}\right)\right)\left(g\right), \label{claim:multiplier comp pos:equation}
        \end{align}
        where \(a_{i,k}(g) \coloneqq a_i(g) \cdot \xi_k(\s(g))\), which is a Borel bounded function compactly supported on the bisection \(s_k \subseteq G\). In addition, note that for all \(i, j = 1, \dots, n\),
        \begin{align*}
            \left(\left(\id_n \otimes m_\varphi\right)\left(\left(a_k^* * a_\ell\right)_{k, \ell = 1}^n\right)\right)_{i, j} = m_{\varphi}\left(a^*_i * a_j\right).
        \end{align*}
        In particular, it follows that \(\id_n \otimes m_\varphi\) maps the algebraically positive element \((a^*_i*a_j)_{i, j = 1}^n\) to the algebraically positive element in \(\Mat_n(\borelalg G)\) whose \((i,j)\)-entry is
        \[
            \left(\sum_{k = 1}^q a_{i,k}^* \right) * \left(\sum_{k = 1}^q a_{j,k}\right) \in \borelalg G,
        \]
        as desired.
        By the considerations before the statement of the claim, we see that the map \(m_\varphi\) induces maps making
        \begin{center}
            \begin{tikzcd}[scale=50em,row sep=large, column sep=large]
                \Mat_n\left(\borelalg{G}\right) \arrow{r}{\id_n \otimes m_\varphi} \arrow[two heads]{d}{\id_n \otimes \Lambda^{\rm ess}_{\rm B}} & \Mat_n\left(\borelalg{G}\right) \arrow[two heads]{d}{\id_n \otimes \Lambda^{\rm ess}_{\rm B}} \\
                \Mat_n\left(\borelessalg{G}\right) \arrow{r}{\id_n \otimes m_\varphi^{E_\L}} & \Mat_n\left(\borelessalg{G}\right)
            \end{tikzcd}
        \end{center}
        a commutative diagram. In particular, for all \((a_i)_{i = 1}^n \in (\borelalg{G})^n\),
        \[
            \left(\id_n \otimes m_\varphi^{E_\L}\right)\left(e_{i,j} \otimes \Lambda^{\rm ess}\left(a_i^* * a_j\right)\right) = e_{i,j} \otimes \Lambda_{\rm B}^{\rm ess}\left(m_{\varphi}\left(a_i^**a_j\right)\right),
        \]
        where \(e_{i,j} \in \Mat_n\) denotes the canonical matrix unit. Thus, given some algebraically positive \((a_i^* * a_j)_{i, j = 1}^n \in \Mat_n(\borelessalg G)\), we may lift it to some algebraically positive \(b \in \Mat_n(\borelalg G)\). In such case, by the argument above the element \((\id_n \otimes m_\varphi)(b)\) coincides with some algebraically positive element.
        Thus, taking the quotient map \(\id_n \otimes \Lambda^{\rm ess}\), it follows that
        \[
            \left(\id_n \otimes m_\varphi^{E_\L}\right)\left(\left(a_i^* * a_j\right)_{i, j = 1}^n\right) = \id_n \otimes \Lambda^{\rm ess} \left(\id_n \otimes m_\varphi\left(b\right)\right)
        \]
        is algebraically positive, as desired.
    \end{proof}

    Now the proof of Stinespring's Dilation Theorem yields the necessary boundedness of \(m_\varphi^{E_\L}\). Indeed, let us fix a faithful representation \(\pi\colon \essmaxborel G\into \B(\Hilb)\) on a Hilbert space \(\Hilb\), and consider the map 
    \(\pi\circ m_\varphi^{E_\L}\colon \borelessalg G\to \B(\Hilb)\). We then define a sesquilinear form on $\borelessalg G\odot \Hilb$ by
    \[
        \braket{\sum_ia_i\otimes\xi_i}{\sum_j b_j\otimes\eta_j} \coloneqq \sum_{i,j}\braket{\xi_i}{\pi\circ m_\varphi^{E_\L}\left(a_i^**b_j\right)\eta_j}.
    \]
    The complete algebraic positivity of \(m_\varphi^{E_\L}\) (cf.\ \cref{claim:multiplier comp pos}) implies that \(\braket{\cdot}{\cdot}\) is a well-defined semi-inner product. We sketch this (rather standard) computation:
    \begin{align*}
        \braket{\sum_ia_i\otimes\xi_i}{\sum_j a_j\otimes\xi_j} & = \sum_{i,j}\braket{\xi_i}{\pi\circ m_\varphi^{E_\L}\left(a_i^**a_j\right)\xi_j} \\
        & \stackrel{\eqref{claim:multiplier comp pos:equation}}{=} \sum_{i,j} \braket{\xi_i}{\pi\left(\left(\sum_{k = 1}^q a_{i,k}^* \right) * \left(\sum_{k = 1}^q a_{j,k}\right)\right) \xi_j}\\
        &\,\,\,\,=\braket{\sum_{i,k}\pi\left(a_{i,k}\right)\xi_i}{\sum_{j,k} \pi\left(a_{j,k}\right) \xi_j}\geq 0.
    \end{align*}
    Taking the Hausdorff completion of \(\borelessalg{G} \odot \Hilb\) with respect to this semi-inner product, we get a Hilbert space \(\hat\Hilb\).
    Now, given \(a\in \borelessalg G\), we define a linear operator \(\rho(a)\) on \(\borelessalg G\odot \Hilb\) by
    \[
        \rho\left(a\right)\Big(\sum_j b_j\otimes\xi_j\Big) \coloneqq \sum_j ab_j\otimes \xi_j.
    \]
    In order to see that \(\rho(a)\) is well defined and extends to a bounded operator on \(\hat\Hilb\), we use the estimate
    \[
        \left(b_i^*a^*ab_j\right)_{i, j = 1}^n \leq \norm{a}^2 \left(b_i^*b_j\right)_{i, j = 1}^n,
    \]
    that holds in \(\Mat_n(\essmaxborel G)\) (as the latter is a \cstar algebra). In order to ensure that the above estimate is preserved by applying \(m_\varphi^{E_\L}\), we need to use the complete algebraic positivity of the latter, and for this we need to know that the element
    \begin{equation}\label{eq:alg-positive-element}
        \norm{a}^2\left(b_i^*b_j\right)_{i, j = 1}^n - \left(b_i^*a^*ab_j\right)_{i, j = 1}^n
    \end{equation}
    is algebraically positive for all \(a\in \borelessalg G\) and \((b_i)_{i = 1}^n \in (\borelessalg G)^n\).
    This is easy to see if \(a \in \borelalg X\) and \(\norm{a} = \norm{a}_\infty = 1\),
    because in that case we have
    \[
        \left(b_i^*b_j - b_ia^*ab_j\right)_{i, j = 1}^n = \left(c_i^*c_j\right)_{i, j = 1}^n,
    \]
    where \(c_i(g) \coloneqq (1-|a(\rg(g))|^2)^{1/2}b_i(g)\).
    Now, if \(a\in\borelessalg G\) is the image of some function \(\tilde{a} \in \borelalg G\) supported on a single bisection, then \(a^*a\in \borelessalg X \subseteq \borelessalg{G}\).
    It then follows from the previous argument (applying it to \(a^*a/\norm{a}^2\)) that \eqref{eq:alg-positive-element} is algebraically positive.
    Therefore \(\rho(a)\) extends to a bounded operator on \(\hat\Hilb\) (with norm \(\norm{\rho(a)} \leq \norm{a}\)) whenever \(a\) is the image of some \(\tilde{a} \in \borelalg G\) supported on a bisection.
    But since every element of \(\borelessalg G\) is a finite sum of such functions, it follows that \(\rho(a)\) extends to a bounded operator for every \(a\in \borelessalg G\), as desired. Straightforward computations show that $\rho$ is a representation.
    
    Finally, take \(e\in \borelalg X\) with \(0\leq e\leq 1\) and \(e(\s(g))=e(\rg(g))=1\) for all \(g \in \supp(\varphi)\). For instance, we can take $e$ to be the characteristic function of the compact subset $\s(K)\cup \rg(K)\sbe X$, where $K\sbe G$ is some compact subset containing $\supp(\varphi)$. Then
    \begin{equation} \label{eq:diego needs this equation}
        m_\varphi\left(e^*\tilde{a}e\right) = m_\varphi\left(\tilde{a}\right)
    \end{equation}
    for all \(\tilde{a} \in \borelalg G\) (note that here \(\tilde{a} \in \borelalg{G}\), as opposed to \(\borelessalg G\)).
    In particular, this does imply that the analogue for \(m_\varphi^{E_\L}\) holds, that is,
    \[
        m_\varphi^{E_{\L}}\left(e^*ae\right) = m_\varphi^{E_\L}\left(a\right)
    \]
    for all \(a \in \borelessalg{G}\). Thus
    \[
        m_\varphi^{E_\L}\left(a\right) = V^*\rho\left(a\right) V,
    \]
    where \(V\colon \Hilb\to \hat\Hilb\), \(V(\xi) \coloneqq e\otimes\xi\).
    This is a bounded operation with $\|V\|\leq\|e\|\leq 1$. Since \(\rho\) is a representation, it follows that \(m_\varphi^{E_\L}\) extends to a contractive completely positive linear map on \(\essmaxborel{G}\), as desired. 

    \

    Lastly, in order to finish the proof of \cref{prop:schur multiplier} we need to show \cref{prop:schur multiplier:max-p}, but this is an easier version of \cref{prop:schur multiplier:max-e}, so we only sketch the proof.
    \cref{claim:multiplier comp pos} shows that the map \(m_\varphi\) is also completely algebraically positive.
    Following Stinespring's Dilation procedure for \(\borelalg{G} \odot \Hilb\), as opposed to \(\borelessalg{G} \odot \Hilb\), yields a Hilbert space \(\tilde \Hilb\), which we may now use to represent \(\borelalg{G}\) by left multiplication, the same element \(e\) appearing in \eqref{eq:diego needs this equation} then yields that \(m_\varphi\) is bounded (as a map \(m_\varphi \colon \borelalg{G} \to \fullborel{G}\)). This implies that it extends to \(\fullborel{G}\), as claimed.
\end{proof}

\begin{remark} \label{rem:schur multipliers}
The following observations are noteworthy:
\begin{enumerate}[label=(\roman*)]
    \item \label{rem:schur multipliers:hausdorff} If \(G\) is Hausdorff, then \(m_{\varphi, \mathrm{red}}^P = m_{\varphi, \mathrm{red}}^{E_\mathcal{L}}\) and \(m_{\varphi, \mathrm{max}}^P = m_{\varphi, \mathrm{max}}^{E_\mathcal{L}}\).
    \item Even when \(a, \varphi \in \algalg G\), it is \emph{not} true that \(m_\varphi(a) \in \algalg G\) (see \cref{ex:trivial-action-ame}).
\end{enumerate}
\end{remark}

The following few results are the last pieces of the puzzle needed before the proof of \cref{thm:wk-cont-nuc,thm:wk-cont-nuc-ess}. The first three are obvious in the Hausdorff case, since every function \(a \in \algalg{G}\) is automatically continuous and compactly supported.
In the general setting these results deserve a computation, since, in fact, the following is why we need to consider the larger algebra $\auxalg{G}$ and the nets \((\xi_i)_i\) witnessing (essential) amenability in \cref{def:ame,def:ess-ame} to be in $\auxalg{G}$.

\begin{lemma} \label{lemma:taking square roots}
    Let \(\zeta \colon G \to \C\) be pointwise positive and such that \(\zeta^2 \in \auxalg{G}\). Then, for all \(\mu > 0\), there is some \(\zeta_\mu \in \auxalg{G}\) such that \(\norm{\zeta_\mu - \zeta}_{\infty} \leq \mu\).
\end{lemma}
\begin{proof}
    Observe that \(\zeta\) is supported on some compact set \(K_0 \subseteq G\). Since \(\auxalg{G}\) is a commutative algebra when equipped with the pointwise multiplication and
    \[
        \auxalg{G} = \bigcup_{K \subseteq G} \mathfrak{A}_K(G),\footnote{\, Here the notation \(\mathfrak{A}_K(G)\) means the elements in \(\auxalg{G}\) whose support is contained in the compact set \(K\).}
    \]
    it follows that the completion of \(\mathfrak{A}_{K_0}(G)\) in the supremum norm \(\norm{\cdot}_{\infty}\) is a commutative \cstar{}subalgebra of \(B_{K_0}(G)\). In particular, the claim follows simply by recalling that we may take square roots in this completion, and then approximating by elements \(\zeta_\mu \in \auxalg G\) where \(\mu > 0\).
\end{proof}

\begin{lemma} \label{lemma:taking norm is continuous}
    Let \(\Hilb\) be a finite dimensional Hilbert space; \(\eta \in \algalg{G, \Hilb} \coloneqq \algalg{G} \otimes \Hilb\) and \(\mu > 0\) be given. Let \(\zeta(g) \coloneqq \norm{\eta(g)}_{\Hilb}\). Then there is some \(\zeta_\mu \in \auxalg{G}\) such that \(\norm{\zeta-\zeta_\mu}_{\infty} \leq \mu\).
\end{lemma}
\begin{proof}
    By assumption, and following \cref{notation:cont functions as sums}, \(\eta = a_1 v_{s_1} + \dots + a_k v_{s_k}\), where \(s_1, \dots, s_k \subseteq G\) are open bisections and \(a_i \in \contc(s_is_i^*, \Hilb)\). Fix some orthonormal basis \((e_n)_{n = 1}^m \subseteq \Hilb\). We have that
    \begin{align*}
        \norm{\eta\left(g\right)}^2 & = \langle \sum_{g \in s_i} a_i\left(\rg(g)\right), \sum_{g \in s_j} a_j\left(\rg(g)\right)\rangle_{\Hilb} = \sum_{g \in s_i \cap s_j} \langle a_i\left(\rg(g)\right), a_j\left(\rg(g)\right)\rangle_{\Hilb} \\
        & = \sum_{n = 1}^m \sum_{g \in s_i \cap s_j} \overline{a_{i,n}(\rg(g))} \cdot a_{j,n}(\rg(g))
    \end{align*}
    Observe that the function \(s_i \cap s_j \ni g \mapsto \overline{a_{i,n}(\rg(g))} \cdot a_{j,n}(\rg(g))\) is pointwise product of continuous functions in \(\algalg G\), and thus is contained in \(\auxalg{G}\).
    The claim then follows from \cref{lemma:taking square roots}.
\end{proof}

\begin{lemma} \label{lemma:taking norm is borel}
    Let \(\Hilb\) be a finite dimensional Hilbert space; and \(\eta \in \borelalg{G, \Hilb} \coloneqq \algalg{G} \otimes \Hilb\). Let \(\zeta(g) \coloneqq \norm{\eta(g)}_{\Hilb}\). Then \(\zeta \in \borelalg{G}\).
\end{lemma}

\begin{proof}
    The proof simply follows by observing that the composition of Borel functions is Borel, and that taking the norm is a Borel operation.
\end{proof}

\begin{remark} \label{rem:evaluate-eta-at-non-dang-units}
    In the same vein as in \cref{lemma:taking norm is continuous}, we may work with elements \(\eta \in \essalgalg{G, \Hilb} \coloneqq \essalgalg{G} \otimes \Hilb\). Indeed, in such case we let
    \[
        \langle h_1 \otimes a_1, h_2 \otimes a_2\rangle \coloneqq \langle h_1, h_2\rangle_\Hilb \, a_1^* a_2 \in \essalgalg{G}
    \]
    for all \(h_1, h_2 \in \Hilb\) and \(a_1, a_2 \in \essalgalg{G}\). In particular, by \cref{cor:j-ess} we may view elements of $\essalg G$ as functions on $G\backslash D$ and therefore evaluate the above expression at non-dangerous units \(x \in X \setminus D\), \emph{i.e.}\
    \begin{equation}
        \langle h \otimes a, h \otimes a\rangle \left(x\right) \coloneqq \langle h, h\rangle_\Hilb \, \sum_{g \in Gx} \overline{a\left(g\right)} a\left(g\right) \label{eq:evaluate-eta-at-non-dang-units}
    \end{equation}
    for all \(a \in \essalgalg{G}\).
    Likewise, the above evaluation extends to a well-defined expression \(\langle \eta, \eta \rangle(x)\) for all \(\eta \in \Hilb \otimes \essalgalg{G}\) and non-dangerous \(x \in X \setminus D\). 
    In the non-essential case, as expected, the expression \eqref{eq:evaluate-eta-at-non-dang-units} makes sense for all units \(x \in X\) as, in that case, we do not quotient by anything.
\end{remark}

\begin{lemma} \label{lemma:image of schur multipliers}
    Let \(\varphi = \xi^* * \xi\) be as in \cref{prop:schur multiplier}. 
    If $\supp(\varphi)\sbe K$ for some compact subset $K\sbe G$, then
    the images of \(m_{\varphi, \rm max}^P\) and \(m_{\varphi, \rm red}^P\) are contained in the closed subspace $B_K(G)\sbe B_c(G)$. 
    Similarly, the images of \(m_{\varphi, \rm max}^{E_\L}\) and \(m_{\varphi, \rm red}^{E_\L}\) are contained in $B_K^\ess(G)\sbe \borelessalg G$.
\end{lemma}
\begin{proof}
This follows directly from the fact that $B_K(G)$ (resp. $B_K^\ess(G)$) is a closed subspace of both $\fullalg G$ and $\redalg G$ (resp. $\essmaxalg{G}$ and $\essalg G$), see \cref{cor:norms-equivalent-B_K(G)}.
\end{proof}

The following is surely well-known to experts.
\begin{lemma} \label{lemma:max-inj-preserves-nuc}
    Let \(B \subseteq A\) be a max-injective inclusion of \cstar{}algebras. If \(A\) is nuclear, then so is \(B\).
\end{lemma}
\begin{proof}
    Let \(C\) be a \cstar{}algebra. Observe that max-injectivity implies that the map \(B \otimes_\max C \to A \otimes_\max C\) is injective. Since the diagram
    \begin{center}
        \begin{tikzcd}[scale=50em]
            B \otimes_\max C \arrow[hook]{r}{} \arrow[two heads]{d}{} & A \otimes_\max C \arrow[equal]{d}{} \\
            B \otimes_{\rm min} C \arrow[hook]{r}{} & A \otimes_{\rm min} C
        \end{tikzcd}
    \end{center}
    commutes, the claim follows.
\end{proof}

We are ready to prove \cref{thm:wk-cont-nuc,thm:wk-cont-nuc-ess}.
\begin{proof}[Proofs of \cref{thm:wk-cont-nuc,thm:wk-cont-nuc-ess}]
    As mentioned, we will be explicit with the proof of \cref{thm:wk-cont-nuc-ess} but both proofs are (both formally and heuristically) very similar. Some of the arguments for some of the implications are different, and we will address these differences should they occur. We refer the reader to \cite{Brown-Ozawa:Approximations}*{Theorem 5.6.18} for a proof of the (non-Borel part of) \cref{thm:wk-cont-nuc} in the Hausdorff setting.

    \

    \cref{thm:wk-cont-nuc-ess:a} \(\Rightarrow\) \cref{thm:wk-cont-nuc-ess:ba} is clear.

    \cref{thm:wk-cont-nuc-ess:ba} \(\Rightarrow\) \cref{thm:wk-cont-nuc-ess:bwc}. Let \((\xi_i)_{i} \subseteq \borelalg{G}\) witness the Borel (resp.\ essential) amenability of \(G\) (cf.\ \cref{def:ess-ame}), and consider the Herz-Schur multipliers
    \begin{enumerate}[label=(\roman*)]
        \item \(m_{\varphi_i, \rm max}^{E_\L} \colon \essmaxborel{G} \to \essmaxborel{G}\);
        \item \(m_{\varphi_i, \rm red}^{E_\L} \colon \essborel{G} \to \essborel{G}\);
        \item \(m_{\varphi_i, \rm max}^{P} \colon \fullborel{G} \to \fullborel{G}\); and
        \item \(m_{\varphi_i, \rm red}^{P} \colon \redborel{G} \to \redborel{G}\),
    \end{enumerate}
    where \(\varphi_i \coloneqq \xi_i^* * \xi_i\), given in \cref{prop:schur multiplier}.
    Suppose \(a \in \borelalg G\) is a function supported on a (not necessarily open) bisection \(s \subseteq G\). Then \(m_{\varphi_i}(a)=\varphi_i\cdot a\) is also a function supported on (the same) \(s\).
    Therefore, by \cref{lem:ess-norm-supp-bisection},
    \begin{align}
        \norm{m_{\varphi_i}(a) - a}_\ess =
      \sup_{g \in s \setminus D} \abs{\varphi_i(g)a(g)- a(g)} \leq \norm{a}_\infty \cdot \sup_{g \in s \setminus D} \abs{\varphi_i\left(g\right) - 1} \to 0, \label{eq:thm:wk-cont-nuc: tends to 0}
    \end{align}    
    and, by definition of the reduced norm,
    \begin{align}
        \norm{m_{\varphi_i}(a) - a}_\red =
      \sup_{g \in s} \abs{\varphi_i(g)a(g)- a(g)} \leq \norm{a}_\infty \cdot \sup_{g \in s} \abs{\varphi_i\left(g\right) - 1} \to 0, \label{eq:thm:wk-cont-nuc: tends to 0-red}
    \end{align}
    Since every element in \(\borelalg{G}\) (resp.\ \(\borelessalg{G}\)) is a finite sum of functions (cf.\ \cref{lem:partition-into-borel-bisections}) supported on bisections, the same estimate \eqref{eq:thm:wk-cont-nuc: tends to 0} (resp.\ \eqref{eq:thm:wk-cont-nuc: tends to 0-red}) holds for all elements in \(\borelalg{G}\) (resp.\ \(\borelessalg{G}\)). Therefore, by an approximation argument, the same holds for all \(a \in \fullborel{G}\) (resp.\ \(\essmaxborel{G}\)).
    Furthermore, \cref{lemma:image of schur multipliers} implies that the image of the multiplier maps are contained in \(\borelalg G \subseteq \fullborel{G}\) (resp.\ \(\borelessalg{G} \subseteq \essmaxborel{G}\)).
    Thus
    \[
        \Lambda^{\rm B}_{E_\L}\left(m_{\varphi_i, \rm max}^{E_\L}\left(a\right)\right) = m_{\varphi_i, \rm red}^{E_\L}\left(\Lambda^{\rm B}_{E_\L}\left(a\right)\right) = 0
    \]
    for all \(a \in \ker(\Lambda^{\rm B}_{E_\L})\) (and similarly for the non-essential counterparts). As \(\Lambda^{\rm B}_{E_\L}\) is injective on $\borelessalg G$, it follows that \(m_{\varphi_i, \rm max}^{E_\L}(a) = 0\). However, \(m_{\varphi_i, \rm max}^{E_\L}(a) \to a\) by the discussion following \eqref{eq:thm:wk-cont-nuc: tends to 0}. Hence \(a = 0\), \emph{i.e.}\ the essential left regular representation
    \[
        \Lambda^{\rm B}_{E_\L} \colon \essmaxborel{G} \to \essborel{G}
    \]
    is injective. The argument for \cref{thm:wk-cont-nuc}: \cref{thm:wk-cont-nuc:ba} \(\Rightarrow\) \cref{thm:wk-cont-nuc:bwc} is analogous.

    \

    \cref{thm:wk-cont-nuc-ess:ba} \(\Rightarrow\) \cref{thm:wk-cont-nuc-ess:bn}.
    We keep the notation from the previous implication, including \(\varphi_i \coloneqq \xi_i^* * \xi_i\) and the multiplier maps \(m_i \coloneqq m_{\varphi_i, {\rm max}}^{E_\L} = m_{\varphi_i, {\rm red}}^{E_\L}\).\footnote{\, Since we only explicitly prove \cref{thm:wk-cont-nuc-ess} we will not be needing the maps \(m_{\varphi_i, {\rm max}}^{P} = m_{\varphi_i, {\rm red}}^{P}\), as these are only needed in the proof of \cref{thm:wk-cont-nuc}.}
    Let \(A\) be any given \cstar{}algebra, and let
    \(Q \colon \essborel{G} \otimes_{\rm max} A \twoheadrightarrow \essborel{G} \otimes_{\rm min} A\) be the canonical surjection. We shall prove that $Q$ is is injective. 

    \begin{claim}
        \((m_i \otimes \id_A)(\essborel{G} \otimes_{\rm max} A) \subseteq \iota_{K_i}(B_{K_i}^{\ess}(G, A)) \subseteq \essborel{G} \otimes_{\max} A\), where \(K_i \subseteq G\) is any compact subset containing the support of \(\varphi_i\).
    \end{claim}
    \begin{proof}
        The fact that such \(K_i\) exists follows from \(\varphi_i \in \borelessalg{G}\) and \cref{lem:partition-into-borel-bisections}.
        The image of $\essborel{G}\odot A$ under \(m_i \otimes \id_A\) is contained in $B_{K_i}^{\ess}(G) \odot A$ by \cref{lemma:image of schur multipliers}, and this is contained in the closed subspace $\iota_{K_i}(B_{K_i}^{\ess}(G, A))\sbe \essborel{G} \otimes_{\max} A$ by \cref{lem:embedding-max-tensor}. 
    \end{proof}

    Observe that the diagram
    \begin{center}
        \begin{tikzcd}[scale=50em,row sep=large, column sep=large]
            \essborel{G} \otimes_{\rm max} A \arrow{r}{m_i \otimes_{\max} \id} \arrow[two heads]{d}{Q}  & \essborel{G} \otimes_{\rm max} A \arrow[two heads]{d}{Q} \\
            \essborel{G} \otimes_{\rm min} A \arrow{r}{m_i \otimes_{\min} \id} &   \essborel{G} \otimes_{\rm min} A
        \end{tikzcd}
    \end{center}
    commutes.
    To prove that \(Q\) is injective, take some \(x \in \ker(Q)\). Then
    \[
        Q\left(\left(m_i \otimes_{\max} \id\right)\left(x\right)\right) = \left(m_i\otimes_{\min} \id\right)\left(Q\left(x\right)\right) = 0.
    \]
    Since \((m_i \otimes \id_A) (x) \in \borelessalg{G} \odot A\); \((m_i \otimes \id_A) (x) \to x\) by the computations in the previous part of the proof; and \(Q\) is injective on \(\borelessalg{G} \odot A\), it follows that \((m_i \otimes \id_A) (x) = 0\) for all \(i\). In particular \(x = \lim_i m_i \otimes \id_A (x) = 0\), as desired.
    
    \

    \cref{thm:wk-cont-nuc:bwc} \(\Rightarrow\) \cref{thm:wk-cont-nuc:wc} (only for \cref{thm:wk-cont-nuc}).
    Follows from \cref{cor:borel-algs,cor:inclusions c into b}.

    \cref{thm:wk-cont-nuc-ess:bwc} \(\Rightarrow\) \cref{thm:wk-cont-nuc-ess:wc} (only for \cref{thm:wk-cont-nuc-ess}).
    Let \(\psi \colon \essmaxalg{G} \to \essmaxborel{G}\) be as in \cref{cor:inclusions c into b}~\cref{cor:inclusions c into b:2}. Item \cref{thm:wk-cont-nuc-ess:wc} follows from the fact that we have the following commuting diagram:
    \begin{center}
        \begin{tikzcd}[scale=50em]
            \essmaxalg{G} \arrow{r}{\psi} \arrow[two heads]{d}{\Lambda_{E_\L}} & \essmaxborel{G} \arrow[equal]{d}{} \\
            \essalg{G} \arrow[hook]{r}{} & \essborel{G}
        \end{tikzcd}
    \end{center}

    \

    \cref{thm:wk-cont-nuc-ess:n} \(\Rightarrow\) \cref{thm:wk-cont-nuc-ess:a}. By nuclearity of \(\essalg{G}\) there are c.c.p. maps
    \[
        \varphi_i \colon \essalg{G} \to \Mat_{k\left(i\right)} \; \text{ and } \; \psi_i \colon \Mat_{k\left(i\right)} \to \essalg{G}
    \]
    such that \(\psi_i(\varphi_i(a)) \to a\) (in norm) for all \(a \in \essalg{G}\).
    Let \(b_i \in \Mat_{k(i)}(\essalg{G})\) be the square root of the positive matrix \((\psi_i(e_{p,q}^{(i)}))_{p, q = 1}^{k(i)}\in \Mat_{k(i)}(\essalg G)\), see \cite{Brown-Ozawa:Approximations}*{Proposition~1.5.12}. Here 
    \(e_{p,q}^{(i)}\) denotes the unit matrix in \(\Mat_{k(i)}(\C)\) whose \((p,q)\)-entry is \(1\) and every other entry is \(0\).
    Consider the element
    \[
        \eta_i \coloneqq \sum_{p, q = 1}^{k(i)} \delta_{p}^{(i)} \otimes \delta_{q}^{(i)} \otimes b_{i,p,q} \in \ell^2_{k(i)} \otimes \ell^2_{k(i)} \otimes \essalg{G}
    \]
    for every \(i\), where \((\delta_{p}^{(i)})_{p = 1}^{k(i)}\) is the canonical orthonormal basis of the Euclidean Hilbert space \(\ell^2_{k(i)}=\C^{k(i)}\) and \(b_{i,p,q}\) is the \((p,q)\)-entry of \(b_i\). Let \(\ell^2_{k(i)} \otimes \ell^2_{k(i)} \otimes \essalg{G}\) denote the standard (right) Hilbert \(\essalg{G}\)-module (recall \cref{lemma:taking norm is continuous,rem:evaluate-eta-at-non-dang-units}), whose \cstar{}algebra of adjointable operators we identify with $\Mat_{k(i)}(\C)\otimes\Mat_{k(i)}(\C)\otimes\M(\essalg G)$.\footnote{\, By \(\M(A)\) we mean the multiplier algebra of a given \cstar algebra \(A\). In particular, \(\M(\essalg G) = \essalg G\) whenever \(X\) is compact.} 
    Denoting by \(1_{k(i)}\) the unit of \(\Mat_{k(i)}(\C)\) and \(1_{\rm ess}\) the unit of \(\M(\essalg{G})\), we have
    \begin{align*}
        \langle \eta_i, (e_{p,q}^{(i)} \otimes 1_{k(i)} \otimes 1_{\rm ess}) \eta_i\rangle = \sum_{s = 1}^{k(i)} b_{i,p,s}^* b_{i,q,s} = \psi_i(e_{p,q}^{(i)})
    \end{align*}
    for all \(p, q = 1, \dots, k(i)\). As the matrix units linearly generate all other matrices, the above also holds for all matrices \(a \in \Mat_{k(i)}\), \emph{i.e.}\
    \begin{equation} \label{eq:nuc-eta}
        \psi_i\left(a\right) = \langle \eta_i, \left(a \otimes 1_{k\left(i\right)} \otimes 1_{\rm ess}\right)\eta_i\rangle.
    \end{equation}
    In particular, since $\psi_i$ is contractive, notice that
      \begin{equation} \label{eq:norm-less-1}
      \langle \eta_i, \eta_i\rangle=\psi_i(1_{k(i)})\leq 1,
      \end{equation}
    so that $\|\eta_i\|\leq 1$ for all $i$. Fix \(\varepsilon > 0\). As \(\essalgalg G\) is dense in \(\essalg{G}\), we can find \(\eta_{i,\varepsilon} \in \ell^2_{k(i)} \otimes \ell^2_{k(i)} \otimes \essalgalg{G}\) with $\|\eta_i-\eta_{i,\varepsilon}\|< \varepsilon$ and $\|\eta_{i,\varepsilon}\|\leq 1$. 
    Since \(\algalg{G}\) quotients onto \(\essalgalg{G}\), there exists some \(\tilde{\eta}_{i,\varepsilon} \in \ell^2_{k(i)} \otimes \ell^2_{k(i)} \otimes \algalg{G}\) such that \(\Id\otimes\Id\otimes \Lambda_{E_\L}(\tilde{\eta}_{i,\varepsilon}) = \eta_{i,\varepsilon}\).
    Moreover, we may view \(\tilde{\eta}_{i,\varepsilon}\) as an element of \(\mathfrak{C}_c(G, \ell^2_{k(i)} \otimes \ell^2_{k(i)})\).
    Consider then:
    \begin{equation} \label{eq:nuc-def-of-xi}
        \xi_{i,\varepsilon}\left(g\right) \coloneqq \norm{\tilde{\eta}_{i,\varepsilon}\left(g\right)}_{\ell^2_{k(i)} \otimes \ell^2_{k(i)}},
    \end{equation}
    and let \((\xi_{i,\varepsilon,\mu})_\mu \subseteq \auxalg{G}\) be some net converging to \(\xi_{i,\varepsilon}\), that is,
    \begin{equation}
        \norm{\xi_{i,\varepsilon} - \xi_{i,\varepsilon,\mu}}_\infty \to 0 \; \text{ when } \mu \to 0.
    \end{equation}
    (Observe that such nets exist by \cref{lemma:taking norm is continuous}.)

    We claim that the net \((\xi_{i,\varepsilon,\mu})_{i,\varepsilon,\mu}\) witnesses the essential amenability of \(G\).
    First, observe that by taking smaller \(\varepsilon\) and \(\mu\) if necessary, we may substitute \(\xi_{i,\varepsilon,\mu}\) with \(\xi_{i,\varepsilon}\).
    Therefore, it suffices to check that conditions \cref{def:ess-ame:norm,def:ess-ame:inva} in \cref{def:ess-ame} are met.
    For this, write $\tilde\eta_{i,\varepsilon}$ as \(\tilde{\eta}_{i,\varepsilon} = \sum_{k,j} \delta_k^{(i)} \otimes \delta_j^{(i)} \otimes \zeta_{k,j}^{(i,\varepsilon)}\). Then
    \begin{align}
        \sum_{g \in Gx} \xi_{i,\varepsilon} \left(g\right)^2 & = \sum_{g \in Gx} \norm{\tilde{\eta}_{i,\varepsilon}\left(g\right)}^2 = \sum_{g \in Gx} \sum_{k,j} \abs{\zeta_{k,j}^{(i, \varepsilon)}\left(g\right)}^2 = \sum_{k,j} \sum_{g \in Gx} \abs{\zeta_{k,j}^{(i, \varepsilon)}\left(g\right)}^2 \nonumber \\
        & = \sum_{k,j} \left(\zeta_{k,j}^{(i, \varepsilon)*} * \zeta_{k,j}^{(i, \varepsilon)}\right) \left(x\right) = \langle \tilde{\eta}_{i, \varepsilon}, \tilde{\eta}_{i, \varepsilon} \rangle \left(x\right) \label{eq:nuc-smaller-than-1}
    \end{align}
    for all \(x \in X\). Furthermore, whenever \(x\) is not dangerous, \emph{i.e.}\ \(x \not\in D\), it follows that 
    \[
        \langle \tilde{\eta}_{i, \varepsilon}, \tilde{\eta}_{i, \varepsilon} \rangle \left(x\right) = \langle \eta_{i, \varepsilon}, \eta_{i, \varepsilon} \rangle \left(x\right) \leq \|\eta_{i,\varepsilon}\|^2\leq 1,
    \]
    where we used notation of \cref{rem:evaluate-eta-at-non-dang-units}.
    Thus
    \begin{equation}\label{eq:bound-1}
        \sup_{x \in X \setminus D} \sum_{g \in Gx} \overline{\xi_{i,\varepsilon}\left(g\right)} \xi_{i,\varepsilon}\left(g\right) =\sup_{x \in X \setminus D} \sum_{g \in Gx} \xi_{i,\varepsilon}\left(g\right)^2   \leq 1.
    \end{equation}
    This yields \cref{def:ess-ame}~\cref{def:ess-ame:norm}.    
    In order to prove \cref{def:ess-ame}~\cref{def:ess-ame:inva} it suffices to prove it for all compact sets \(K\) contained in some (henceforth fixed) open bisection \(s \subseteq G\) (cf.\ \cref{rem:ess-ame}~\cref{rem:ess-ame:compact in bisection}). We fix a pointwise positive function \(a v_s \in \algalg{G}\) (recall \cref{notation:cont functions as sums}) with $0\leq a\leq 1$ and such that $a v_s$ is equal to \(1\) on \(K\) and is compactly supported and continuous on \(s\) (such a function exists since \(G\) is \'etale, and thus \(s\) is homeomorphic to some open subset of a Hausdorff space).
    Fix any given \(g \in K \setminus D \subseteq s \setminus D\), and let \(x \coloneqq \s(g)\).
    By \cref{lemma:dangerous arrows} it follows that \(x \in X \setminus D\).
    We then have \(1 = (av_s)(g) = a(\rg(g))\) and, thus,
    \begin{equation} \label{eq:nuc:some ev is 1}
        1 = \overline{a\left(\rg(g)\right)} a\left(\rg(g)\right) = \sum_{h \in Gx} \overline{\left(av_s\right)\left(h\right)} \left(av_s\right)\left(h\right) = \left(\left(av_s\right)^* * av_s\right)\left(x\right),
    \end{equation}
    where the second equality uses that \(s\) is a bisection containing \(g\) (and thus \(s\) does \emph{not} contain any other element of \(Gx\), as \(x = \s(g)\)).
    Recall that we may see \(\algalg{G} \subseteq \fullalg{G}\) and, thus, \(\Lambda^{\rm ess}(av_s)\) is naturally an element in \(\essalg{G}\) (cf.\ \cref{cor:diagram}).
    For the sake of readability, we let
    \[
        c \coloneqq \Lambda^{\rm ess}\left(av_s\right) \in \essalg{G} \;\; \text{ and } \;\; d_i \coloneqq \varphi_i\left(c\right) \in \Mat_{k\left(i\right)}(\C).
    \]
    Thus \(d_i\) is a contraction, as \(\norm{av_s} \leq 1\) and \(\psi_i\) is contractive. Moreover,
    \begin{align*}
        \norm{\left(\varphi_i\left(c\right) \otimes 1_{k\left(i\right)} \otimes 1_{\rm ess}\right) \eta_{i} - \eta_{i} c}^2 & = \langle \left(d_i \otimes 1_{k\left(i\right)} \otimes 1_{\rm ess}\right) \eta_i, \left(d_i \otimes 1_{k\left(i\right)} \otimes 1_{\rm ess}\right) \eta_i\rangle \\
        & \quad\quad\quad + \langle \eta_i c, \eta_i c\rangle - \langle \left(d_i \otimes 1_{k\left(i\right)} \otimes 1_{\rm ess}\right) \eta_i, \eta_i c\rangle \\
        & \quad\quad\quad\quad - \langle \eta_i c, \left(d_i \otimes 1_{k\left(i\right)} \otimes 1_{\rm ess}\right) \eta_i\rangle \\
        & = \psi_i\left(d_i^*d_i\right) + c^* \psi_i\left(1_{k\left(i\right)}\right) c - \psi_i\left(d_i\right)^* c - c^* \psi_i\left(d_i\right) \\
        & \leq \left(\psi_i \circ \varphi_i\right)\left(c^*c\right) + c^*c - \left(\psi_i \circ \varphi_i\right)\left(c\right)^* c \\
        & \quad\quad\quad\quad - c^* \left(\psi_i \circ \varphi_i\right)\left(c\right) \to 0 \;\;\; \text{(in } i \text{).}
    \end{align*}
    Note that the second equality uses \eqref{eq:nuc-eta}, whereas the inequality uses the fact that both \(\varphi_i\) and \(\psi_i\) are completely positive maps.
    In particular, the above computations imply that
    \begin{equation*}
        \norm{\left(\varphi_i\left(c\right) \otimes 1_{k\left(i\right)} \otimes 1_{\rm ess}\right) \eta_{i, \varepsilon} - \eta_{i,\varepsilon} c} \to 0
    \end{equation*}
    when \(i\) grows and \(\varepsilon\) tends to \(0\) (henceforth we shall only say that \((i,\varepsilon)\) \emph{grows} to mean this behavior).
    Moreover    
    \begin{equation}
        \langle \left(\varphi_i\left(c\right) \otimes 1_{k\left(i\right)} \otimes 1_{\rm ess}\right) \eta_{i,\varepsilon}, \eta_{i,\varepsilon} c \rangle \left(x\right) \to \left(\left(av_s\right)^* * av_s\right)\left(x\right) = 1 \label{eq:converges to 1}
    \end{equation}
    when \((i,\varepsilon)\) grows (recall \eqref{eq:nuc:some ev is 1}).
    Using \cref{cor:j-ess} we may now observe that
    \begin{equation} \label{eq:tilde eta is eta out of d}
        \tilde{\eta}_{i, \varepsilon}\left(y\right) = \eta_{i,\varepsilon}\left(y\right)
    \end{equation}
    for all \(y \in X \setminus D\), \emph{i.e.}\ \(\eta_{i,\varepsilon}\) and \(\tilde{\eta}_{i, \varepsilon}\) coincide outside of the dangerous units (recall that \(x \not\in D\)).
    Furthermore, for all \(h \in G \rg(g)\),
    \begin{equation} \label{eq:eta gh is etac h}
         \tilde{\eta}_{i,\varepsilon}\left(gh\right) = \left(\tilde{\eta}_{i,\varepsilon} \, av_s\right)\left(h\right).
    \end{equation}
    After all the above computations, we may now give the estimate: 
    \begin{align*}
        \left(\xi_{i,\varepsilon}^* * \xi_{i,\varepsilon}\right)\left(g\right) & = \sum_{h \in G \rg(g)} \overline{\xi_{i,\varepsilon}\left(h\right)} \cdot \xi_{i, \varepsilon}\left(gh\right) \stackrel{\eqref{eq:nuc-def-of-xi}}{=} \sum_{h \in G \rg(g)} \norm{\tilde{\eta}_{i,\varepsilon}\left(h\right)} \cdot \norm{\tilde{\eta}_{i,\varepsilon}\left(gh\right)} \allowdisplaybreaks \\
        & \stackrel{\eqref{eq:eta gh is etac h}}{=} \sum_{h \in G \rg(g)} \norm{\tilde{\eta}_{i,\varepsilon}\left(h\right)} \cdot \norm{\left(\tilde{\eta}_{i,\varepsilon} \, av_s\right)\left(h\right)} \allowdisplaybreaks \\
        & \geq \abs{\sum_{h \in G\rg(g)} \langle \left(\varphi_i\left(c\right) \otimes 1_{k\left(i\right)} \otimes 1_{\rm ess}\right) \tilde{\eta}_{i,\varepsilon}\left(h\right), \left(\tilde{\eta}_{i,\varepsilon} \, a v_s\right) \left(h\right) \rangle} \allowdisplaybreaks \\
        & \stackrel{\eqref{eq:evaluate-eta-at-non-dang-units}}{=} \abs{\langle \left(\varphi_i\left(c\right) \otimes 1_{k\left(i\right)} \otimes 1_{\rm ess}\right) \tilde{\eta}_{i,\varepsilon}, \tilde{\eta}_{i,\varepsilon} c\rangle \left(x\right)} \\
        & \stackrel{\eqref{eq:tilde eta is eta out of d}}{=} \abs{\langle \left(\varphi_i\left(c\right) \otimes 1_{k\left(i\right)} \otimes 1_{\rm ess}\right) \eta_{i,\varepsilon}, \eta_{i,\varepsilon} c\rangle \left(x\right)} \stackrel{\eqref{eq:converges to 1}}{\to} 1.
    \end{align*}
    Since the choice of \(g \in K \setminus D\) was arbitrary, \emph{i.e.}\ the function \(a v_s\) depends only on \(K\) and not on \(g\), the above convergence is uniform for $g$ in $K\setminus D$. Moreover, by~\eqref{eq:bound-1} and the Cauchy-Schwartz inequality we also have $\left(\xi_{i,\varepsilon}^* * \xi_{i,\varepsilon}\right)\left(g\right) \leq 1$ for all $g\in K\setminus D$. All this shows that $\left(\xi_{i,\varepsilon}^* * \xi_{i,\varepsilon}\right)\left(g\right)\to 1$ uniformly for $g\in K\setminus D$, yielding \cref{def:ess-ame}~\cref{def:ess-ame:inva} and finishing the proof.

    \

    \cref{thm:wk-cont-nuc-ess:bn} \(\Rightarrow\) \cref{thm:wk-cont-nuc-ess:ba}. The proof of this implication is very similar to that of \cref{thm:wk-cont-nuc-ess:n} \(\Rightarrow\) \cref{thm:wk-cont-nuc-ess:a}, so we will only sketch it. By nuclearity of \(\essborel{G}\) there are c.c.p. maps
    \[
        \varphi_i \colon \essborel{G} \to \Mat_{k\left(i\right)} \; \text{ and } \; \psi_i \colon \Mat_{k\left(i\right)} \to \essborel{G}
    \]
    approximating the identity map in the point-norm topology. Following the same strategy, one arrives at \[\xi_{i,\varepsilon}(g) \coloneqq \norm{\tilde{\eta}_{i,\varepsilon}(g)}_{\ell^2_{k(i)} \otimes \ell^2_{k(i)}}\] as in \eqref{eq:nuc-def-of-xi}.
    Nevertheless, note that in this setting \(\tilde{\eta} \in \borelalg{G}\).
    Applying \cref{lemma:taking norm is borel} instead of \cref{lemma:taking norm is continuous} one deduces that \(\xi_{i,\varepsilon} \in \borelalg{G}\) on the nose (no need to consider nets on \(\mu > 0\)). The rest of the proof now goes through, since it was only about computing certain values of certain functions, and thus apply just as well to this case.

    \

    \cref{thm:wk-cont-nuc:ba} \(\Rightarrow\) \cref{thm:wk-cont-nuc:n} (only for \cref{thm:wk-cont-nuc}). We know that \cref{thm:wk-cont-nuc:ba} \(\Rightarrow\) \cref{thm:wk-cont-nuc:bn,thm:wk-cont-nuc:bwc,thm:wk-cont-nuc:wc}. In particular,
    \begin{itemize}
        \item \(\fullborel G \cong \redborel G\) is nuclear.
        \item \(\fullalg G \cong \redalg G\).
    \end{itemize}
    By the `moreover' statement in \cref{cor:inclusions c into b},
    \[
        \redalg G \cong \fullalg G \subseteq \fullborel G \cong \redborel G
    \]
    is a max-injective inclusion of \cstar{}algebras. It then follows from \cref{lemma:max-inj-preserves-nuc} that \(\redalg G\) is nuclear.
    
    \cref{thm:wk-cont-nuc-ess:ba} \(\Rightarrow\) \cref{thm:wk-cont-nuc-ess:n} (only for \cref{thm:wk-cont-nuc-ess} under the assumption that $\psi$ is max-injective). We know that \cref{thm:wk-cont-nuc:ba} \(\Rightarrow\) \cref{thm:wk-cont-nuc:bn,thm:wk-cont-nuc:wc}. In particular, \(\essmaxborel G \cong \essborel G\) is nuclear. If \(\psi \colon \essmaxalg{G} \to \essmaxborel{G}\) is max-injective, then $\essmaxalg G$, and hence also $\essalg G$, is nuclear by \cref{lemma:max-inj-preserves-nuc}.
\end{proof}

\begin{remark}
    \cref{ex:trivial-action} (see also \cref{ex:trivial-action-ame}) shows that \(G\) may be essentially amenable but not amenable. Moreover, in these examples $\essmaxalg{G} \cong \essalg{G}$ is nuclear (commutative). Thus the diagram in \cref{cor:diagram} collapses as follows:
    \begin{center}
    \begin{tikzcd}[scale=50em]
        \fullalg{G} \arrow[two heads]{dr}{\Lambda^{\text{max}}_{E_\mathcal{L},P}} \arrow[two heads]{d}{\Lambda_P} &  \\
        \redalg{G} \arrow[two heads]{r}{\Lambda^{\text{red}}_{E_\mathcal{L},P}} & \essmaxalg{G} \cong \essalg{G}
    \end{tikzcd}
    \end{center}
\end{remark}

\subsection{Application to classical crossed products}
In this section, we derive some direct consequences of our main results for crossed products by ordinary group actions on spaces. If \(\Gamma\) is a discrete group acting on a locally compact Hausdorff space \(X\), its transformation groupoid \(G \coloneqq X \rtimes \Gamma\) is a locally compact Hausdorff groupoid. In this case, \(\borelalg G\) (respectively, \(\algalg G\)) denotes the space of bounded Borel (respectively, continuous) compactly supported functions \(X \times \Gamma \to \mathbb{C}\). The associated \cstar{}algebras are given by  
\[
\fullborel G = B_0(X) \rtimes_{\max} \Gamma \quad \text{and} \quad \fullalg G = \contz(X) \rtimes_{\max} \Gamma,
\]  
where these crossed products are taken with respect to the canonical \(\Gamma\)-actions on \(B_0(X)\) and \(\contz(X)\). Here, \(B_0(X)\) denotes the (commutative) \cstar{}algebra of bounded Borel functions vanishing at infinity, while \(\contz(X) \subseteq B_0(X)\) is the usual \cstar{}algebra of continuous functions vanishing at infinity. Similarly, we have  
\[
\redborel G = B_0(X) \rtimes_{\red} \Gamma \quad \text{and} \quad \redalg{G} = \contz(X) \rtimes_{\red} \Gamma.
\]  
Since \(G\) is Hausdorff, there is no distinction between essential and non-essential \cstar{}algebras. As an immediate consequence of our main result (cf.\ \cref{thm:wk-cont-nuc}), we obtain \cref{thm-intro-borel} from the introduction. In particular, it follows that \(\contz(X) \rtimes_{\red} \Gamma\) is nuclear if and only if \(B_0(X) \rtimes_{\red} \Gamma\) is nuclear.  

The algebra \(B_0(X) \rtimes_{\red} \Gamma\) is typically large and often non-separable, even if \(G=X\times \Gamma\) is second countable. Structurally, it resembles a von Neumann algebra or an AW\(^*\)-algebra, though it generally belongs to neither of these classes. For instance, it may fail to be unital if \(X\) is not compact. The following example illustrates the subtleties involved, and it indicates that we are at the ``borderline'' where our techniques apply and/or the results are even possible.

\begin{example}\label{exa:crossed-product-l-infty}
Assume \(X\) is compact and \(\Gamma\) acts freely on \(X\).  
Then, as a $\Gamma$-set, we can identify \(X\) with \(X/\Gamma \times \Gamma\) by choosing a section for the orbit map \(X \to X/\Gamma\). However, this section is typically neither continuous nor Borel measurable.  

In particular, if \(\Gamma\) is exact, then its action on \(\ell^\infty(X)\) is amenable, which implies that the maximal and reduced crossed products coincide:
\[
\ell^\infty(X) \rtimes_{\max} \Gamma = \ell^\infty(X) \rtimes_{\red} \Gamma.
\]
Moreover, this \cstar{}algebra is nuclear. However, the original action of \(\Gamma\) on \(X\) can be any free action, which need not be amenable. Consequently, the algebras  
\[
B_b(X) \rtimes_{\max} \Gamma, \quad B_b(X) \rtimes_{\red} \Gamma, \quad \cont(X) \rtimes_{\max} \Gamma, \quad \cont(X) \rtimes_{\red} \Gamma
\]
may all be distinct and non-nuclear.  

We would like to thank Rufus Willett for bringing this example to our attention.    
\end{example}


\section{The Borel singular ideal and equality of reduced and essential algebras} \label{sec:simplicity}
In this section we give sufficient conditions for the canonical quotient map \(\redalg{G} \twoheadrightarrow \essalg{G}\) to be an isomorphism, cf.\ \cref{prop:alg-sing-ideal-closure}.
For this, we use all the machinery from \cref{sec:ess-reps-algs,sec:aux-alg,sec:ess-ap} and, in particular, we are especially interested in when the closure of (an ideal containing) \(\NN_{E_\L} \cap \algalg{G}\) (see \cref{cor:nucleus is in d}) is (or, rather, contains) the very ideal \(J_{\text{sing}}\) appearing in \cref{the:KM-ess-J-sing}.

Before we state the next result, we need to introduce some further notation.   We define $$\auxalgc{G}:=\bigcup_{K\sbe G}\overline{\auxalgK{G}},$$
with the union ranging over all compact subsets $K\sbe G$, and $\overline{\auxalgK{G}}$ means the closure of $\auxalgK G$ with respect to the supremum norm. This is then a closed subspace of $B_K(G)$, which is, in turn, a closed subspace of $\redborel{G}$ by \cref{cor:norms-equivalent-B_K(G)}. In particular, we may view $\auxalgc{G}\sbe \redborel G$.

\begin{proposition} \label{prop:alg-sing-ideal-closure}
    If \(G\) is an amenable \'etale groupoid, then
    \[
        J_{\rm sing} \stackrel{\rm (def)}{=} \ker\left(\redalg G \twoheadrightarrow \essalg G\right) \subseteq \overline{ \left\{ a \in \auxalgc{G} \mid \supp(a) \text{ is meager} \right\} }^{\norm{\cdot}_{\rm red}}.\footnote{\, Recall that \(\norm{a}_{\rm red} \coloneqq \sup_{x \in X} \norm{\lambda_x(a)}\) for \(a \in \borelalg G\).}
    \]
\end{proposition}
\begin{proof}
    Let \((\xi_i)_i \subseteq \auxalg{G}\) witness the amenability of \(G\). Just as in the proof of \cref{thm:wk-cont-nuc}, let \( \varphi_i \coloneqq \xi_i^* * \xi_i \in \auxalg{G}\) and \(m_i \coloneqq m_{\varphi_i, \rm red}^P \colon \redalg G \to \redborel{G}\) be the Herz-Schur multipliers of \cref{prop:schur multiplier}. Furthermore, let \(K_i \subseteq G\) be a compact subset containing the support of \(\varphi_i\).
    Take some \(a \in J_{\rm sing}\). First notice that $m_i(a)\in \auxalgc{G}$ because this is true for $a\in \algalg G$ and $\auxalgK G$ is a closed subspace of $\redalg G$.
    By computations similar to those in \cref{thm:wk-cont-nuc}, we have
    \begin{itemize}
        \item \(m_i(a) \to a\) in norm when \(i\) grows; and
        \item \(\supp(m_i(a)) \subseteq \supp(a)\) is meager.
    \end{itemize}
    This is exactly saying that \(a \in \overline{ \left\{ a \in \auxalgc{G} \mid \supp(a) \text{ is meager} \right\} }\), as desired.
\end{proof}

The main interest of \cref{prop:alg-sing-ideal-closure} is that one can test the simplicity of \(\redalg{G}\) by testing the simplicity not of \(\algalg{G}\), but of \(\auxalg{G}\) and/or \(\redborel G\).

\begin{theorem}[cf.\ \cref{thm-intro-red-eq-ess}] \label{cor:simple-red-eq-ess}
    Let \(G\) be an amenable \'etale groupoid. Suppose that
    \[
        \left\{a \in \auxalgc G \mid \supp(a) \text{ is meager}\right\} = 0.
    \]
    Then \(\fullalg{G} \cong \essalg{G}\) via the canonical quotient map. In particular, \(\fullalg{G}\) is simple if and only if \(G\) is minimal and topologically free.
\end{theorem}
\begin{proof}
    The fact that \(\redalg G \twoheadrightarrow \essalg G\) is injective is an immediate consequence of \cref{prop:alg-sing-ideal-closure}.
    Likewise, if \(G\) is minimal and topologically free, then \(\fullalg{G} \cong \redalg{G}\) is simple by \cite{KwasniewskiMeyer-essential-cross-2021}*{Theorem 7.29}.
\end{proof}


\section{Bruce-Li semigroup algebras as quotients of the maximal essential C*-algebra and their nuclearity} \label{sec:bruce-li}

In this concluding (and rather concise) section of the paper, we apply the machinery developed thus far to prove that the \cstar algebras \(\mathfrak{A}_\sigma\), studied in \cite{bruce-li:2024:alg-actions}, often arise as \emph{essential exotic} \cstar algebras. Specifically, we show that \(\mathfrak{A}_\sigma\) lies between \(\essmaxalg{G_\sigma}\) and \(\essalg{G_\sigma}\) (cf.\ \cref{cor:diagram-bruce-li}).
It is worth noting that \cite{bruce-li:2024:alg-actions} establishes only that the \cstar algebras \(\mathfrak{A}_\sigma\) appear as exotic \cstar algebras in the broader sense, meaning that they lie between \(\fullalg{G}\) and \(\essalg{G}\), while also analyzing various properties of these algebras. 
We express our gratitude to Chris Bruce for several insightful discussions regarding \cite{bruce-li:2024:alg-actions}.

We begin by briefly recalling the notation and terminology from \cite{bruce-li:2024:alg-actions}, while directing the reader to that work for a more detailed exposition.
An \emph{algebraic action} (cf.\ \cite{bruce-li:2024:alg-actions}*{Definition 2.1}) is a faithful action \(\sigma \colon S \curvearrowright \Gamma\), where \(S\) is a countable (and necessarily left-cancellative) monoid, \(\Gamma\) is a discrete countable group, and each map \(\sigma_s \colon \Gamma \to \Gamma\) is an injective group homomorphism for all \(s \in S\).
From such an action, \cite{bruce-li:2024:alg-actions}*{Definition 3.29} constructs an \'etale, locally compact groupoid \(G_\sigma\) with a compact and totally disconnected unit space \(\partial \hat{\mathcal{E}} \subseteq G_\sigma\). Here, \(\mathcal{E}\) denotes the idempotent semilattice of an inverse semigroup \(I_\sigma\) (cf.\ \cite{bruce-li:2024:alg-actions}*{Subsection 3.4}); \(\hat{\mathcal{E}}\) is the space of characters of \(\mathcal{E}\); and \(\partial \hat{\mathcal{E}}\) is the (closed and invariant) subset of \(\hat{\mathcal{E}}\) consisting of \emph{tight} filters (cf.\ \cite{bruce-li:2024:alg-actions}*{Definition 3.20}).
The inverse semigroup \(I_\sigma\), generated by \(\Gamma\) and \(S\), consists of the partial bijections of \(\Gamma\). It acts on \(\hat{\mathcal{E}}\), leaving \(\partial \hat{\mathcal{E}}\) invariant. The groupoid \(G_\sigma\) is then defined as the groupoid of germs of this action, \(I_\sigma \curvearrowright \partial \hat{\mathcal{E}}\).\footnote{\, By \emph{groupoid of germs}, we refer to the construction found in, e.g., \cites{BussMartinez:Approx-Prop,Exel:Inverse_combinatorial}, among others. Note, however, that there exists an alternative construction of the groupoid of germs that eliminates additional arrows.}
The \cstar algebra \(\mathfrak{A}_\sigma\), primarily studied in \cite{bruce-li:2024:alg-actions}*{Definition 3.1}, is a subalgebra of \(\B(\ell^2(\Gamma))\).

Observe that there is a map \(\Gamma \ni \gamma \mapsto \chi_\gamma \in \partial \hat{\mathcal{E}}\) with dense image (cf.\ \cite{bruce-li:2024:alg-actions}*{Definition 3.26 and Remark 3.27}).
In fact, the space \(\partial \hat{\mathcal{E}}\) can, thus, be seen as a completion of a quotient of \(\Gamma\).
Recall (cf.\ \cite{bruce-li:2024:alg-actions}*{Definition 4.11}) that the action \(\sigma \colon S \curvearrowright \Gamma\) is \emph{exact} if the trivial subgroup is the biggest subgroup of \(\Gamma\) that is invariant under \(\sigma_s\) for all \(s \in S\). In particular, it follows that if \(\Gamma\) is abelian then \(\sigma\) is exact if and only if \(G_\sigma\) is topologically free (cf.\ \cite{bruce-li:2024:alg-actions}*{Theorem 4.14}).
In fact, exactness of \(\sigma\) already implies topological freeness of \(G_\sigma\) (cf.\ \cite{bruce-li:2024:alg-actions}*{Corollary 4.18}).
For our purposes, the interest behind exactness of \(\sigma\) stems from \cite{bruce-li:2024:alg-actions}*{Lemma 5.1}, which in particular implies that the canonical map \(\Gamma \ni \gamma \mapsto \chi_\gamma \in \partial \hat{\mathcal{E}}\) is injective. In particular, this allows us to make use of \cref{prop: a class of ess reps}.

\begin{proposition} \label{bruce-li-algs-key}
    Let \(\sigma \colon S \curvearrowright \Gamma\) be an exact algebraic action (cf.\ \cite{bruce-li:2024:alg-actions}*{Definition 2.1}). Consider the representation
    \begin{align*}
        \pi_\sigma \colon \algalg{G_\sigma} & \to \B\left(\ell^2\left(\Gamma\right)\right), \\
        \pi_\sigma\left(f v_s\right)\left(\delta_\gamma\right) & \coloneqq
            \left\{ \begin{array}{rl}
                f\left(s\gamma\right) \delta_{s\gamma} & \text{if } \; \chi_\gamma \in s^*s, \\
                0 & \text{otherwise,}
            \end{array} \right.
    \end{align*}
    where \(s \subseteq G_\sigma\) is an open bisection and \(fv_s \colon G_\sigma \to \CCC\) is a function compactly supported on \(s\).
    The following assertions hold.
    \begin{enumerate}[label=(\roman*)]
        \item \label{bruce-li-algs-key:1} The image of \(\pi_\sigma\) is dense in \(\mathfrak{A}_\sigma\).
        \item \label{bruce-li-algs-key:2} \(\pi_\sigma\) is an essential representation of \(G_\sigma\).
    \end{enumerate}
\end{proposition}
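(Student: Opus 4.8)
The plan is to deduce both assertions from the machinery of \cref{sec:class of ess reps}, by identifying \(\pi_\sigma\) with the representation \(\pi\) of \cref{prop: a class of ess reps} attached to the invariant set \(\Gamma_0 \coloneqq \{\chi_\gamma : \gamma \in \Gamma\} \subseteq \partial\hat{\mathcal{E}}\). Since \(\sigma\) is exact, \cite{bruce-li:2024:alg-actions}*{Lemma 5.1} gives that \(\gamma \mapsto \chi_\gamma\) is injective, so the unitary \(\ell^2(\Gamma) \cong \ell^2(\Gamma_0)\), \(\delta_\gamma \mapsto \delta_{\chi_\gamma}\), intertwines \(\pi_\sigma\) with the formula defining \(\pi\) in \cref{prop: a class of ess reps}. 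Thus everything reduces to checking that \(\Gamma_0\) is an invariant subset of the unit space in the sense of \cref{def:inv set of units} and that it contains no dangerous unit.

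For \cref{bruce-li-algs-key:2}, I would first verify invariance: if \(g \in G_\sigma\) satisfies \(\rg(g) = \chi_\gamma \in \Gamma_0\), then, writing \(g\) as the germ of a partial bijection \(t \in I_\sigma\) at \(\s(g)\), the equality relating \(t\) and \(\chi_\gamma\) forces \(\s(g)\) to again be a principal character \(\chi_{\gamma'}\) with \(\gamma' = t^{-1}\gamma\), because \(I_\sigma\) acts on the principal characters through the partial bijections of \(\Gamma\) out of which it is built. The harder point is \(\Gamma_0 \cap D = \emptyset\): one must rule out that a principal character \(\chi_\gamma\) witnesses the non-Hausdorffness of \(G_\sigma\). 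By \cref{lemma:dangerous arrows} together with \cref{lemma:dangerous arrows:convergence}, \(\chi_\gamma\) is dangerous exactly when it carries an isotropy arrow \(h \neq \chi_\gamma\) that cannot be topologically separated from \(\chi_\gamma\); I would exclude this using the clopen neighborhood basis of \(\chi_\gamma\) in \(\partial\hat{\mathcal{E}}\) coming from the semilattice \(\mathcal{E}\), together with the fact that a germ fixing the principal character \(\chi_\gamma\) already agrees with a fixed bisection on such a clopen set. Once \(\Gamma_0 \subseteq X \setminus D\) is established, essentiality of \(\pi_\sigma\) is precisely the ``Moreover'' clause of \cref{prop: a class of ess reps}.

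For \cref{bruce-li-algs-key:1}, I would argue by matching generators. On the one hand, for each \(t \in I_\sigma\) the associated compact-open bisection \(s_t \subseteq G_\sigma\) has clopen range, so \(1_{\rg(s_t)} v_{s_t} \in \algalg{G_\sigma}\), and a direct computation from the defining formula (using invariance of \(\Gamma_0\)) shows that \(\pi_\sigma(1_{\rg(s_t)} v_{s_t})\) is exactly the partial isometry \(W_t\colon \delta_\gamma \mapsto \delta_{t\gamma}\) (defined for \(\gamma\) in the domain of \(t\)) generating \(\mathfrak{A}_\sigma\) in \cite{bruce-li:2024:alg-actions}*{Definition 3.1}; hence \(\mathfrak{A}_\sigma \subseteq \overline{\pi_\sigma(\algalg{G_\sigma})}\). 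On the other hand, since \(\partial\hat{\mathcal{E}}\) is compact and totally disconnected, every \(f \in \contc(\rg(s))\) is a uniform limit of finite linear combinations of indicators of compact-open subsets of \(\rg(s)\), and \(\pi_\sigma\) sends each such indicator into the \(*\)-algebra generated by the \(W_t\)'s; by \cref{lemma:norms red infty max} these approximations are norm approximations, giving \(\pi_\sigma(\algalg{G_\sigma}) \subseteq \mathfrak{A}_\sigma\). Combining the two inclusions yields the asserted density.

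The main obstacle I anticipate is the non-dangerousness \(\Gamma_0 \cap D = \emptyset\) in \cref{bruce-li-algs-key:2}: unlike invariance, which is purely algebraic, this genuinely uses the topology of \(\partial\hat{\mathcal{E}}\) and the precise way the principal characters \(\chi_\gamma\) sit inside the tight boundary, and it is where exactness of \(\sigma\) is really needed (it supplies injectivity of \(\gamma \mapsto \chi_\gamma\) and the topological freeness of \(G_\sigma\), cf.\ \cite{bruce-li:2024:alg-actions}*{Corollary 4.18}). By contrast, the bookkeeping in \cref{bruce-li-algs-key:1} is routine once the correspondence \(t \leftrightarrow s_t\) between \(I_\sigma\) and the compact-open bisections of \(G_\sigma\) is set up.
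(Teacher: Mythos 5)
Your overall strategy coincides with the paper's: for \cref{bruce-li-algs-key:1} you prove the same two inclusions (matching the generators $\Lambda_\sigma(t)$, your $W_t$, with $\pi_\sigma$ applied to indicators of compact-open bisections, and approximating an arbitrary $fv_s$ by linear combinations of such indicators using total disconnectedness of $\partial\hat{\mathcal{E}}$, with the sup-norm bound on single bisections justifying that these are norm approximations); and for \cref{bruce-li-algs-key:2} you proceed, exactly as the paper does, by viewing $\pi_\sigma$ as the representation of \cref{prop: a class of ess reps} attached to the invariant set $\Gamma_0 \coloneqq \{\chi_\gamma : \gamma \in \Gamma\}$, with exactness entering through the injectivity of $\gamma \mapsto \chi_\gamma$ from \cite{bruce-li:2024:alg-actions}*{Lemma 5.1}. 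Your explicit verification of invariance of $\Gamma_0$ is fine (the paper leaves it implicit).

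There is, however, a genuine gap at precisely the point you single out as the main obstacle: the claim $\Gamma_0 \cap D = \emptyset$. The paper does not prove this claim; it cites \cite{bruce-li:2024:alg-actions}*{Proposition 5.4}, a nontrivial result about how principal characters sit in the tight boundary. Your sketch --- ``use the clopen neighborhood basis of $\chi_\gamma$ together with the fact that a germ fixing $\chi_\gamma$ agrees with a fixed bisection on such a clopen set'' --- is not an argument: \emph{every} germ agrees with a bisection (namely the bisection of germs of the same element of $I_\sigma$) on a neighborhood of its source, so this observation by itself cannot separate a nontrivial isotropy germ $[t,\chi_\gamma]$ from the unit $\chi_\gamma$. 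What must be excluded is that the set of units at which $t$ has \emph{trivial} germ accumulates at $\chi_\gamma$ while $[t,\chi_\gamma]$ itself is nontrivial, and ruling this out genuinely uses the structure of the algebraic action; that is the content of Bruce--Li's Proposition 5.4. Moreover, your parenthetical that exactness is ``really needed'' here because it supplies topological freeness of $G_\sigma$ (via \cite{bruce-li:2024:alg-actions}*{Corollary 4.18}) is a red herring: topological freeness does not preclude dangerous units, as the paper's own example in \cref{rem:support is in d is sharp} shows --- the groupoid $[0,1) \sqcup \{1\}\times\Z_2$ there is topologically free, yet the unit $1$ is dangerous. So, as written, \cref{bruce-li-algs-key:2} is incomplete; it becomes correct once the non-dangerousness claim is either cited from \cite{bruce-li:2024:alg-actions}*{Proposition 5.4} (as the paper does) or proved in full.
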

\begin{proof}
    \cref{bruce-li-algs-key:1} is, simply, a reformulation of the fact that (in the notation of \cite{bruce-li:2024:alg-actions}*{Definition 3.1}) the \cstar algebra \(\mathfrak{A}_\sigma\) is generated as the closed span of the image of
    \[
        \Lambda_\sigma \colon I_\sigma \to \B\left(\ell^2\left(\Gamma\right)\right), \;\; \Lambda_\sigma\left(s\right) \delta_\gamma \coloneqq \left\{ \begin{array}{rl}
                \delta_{s\gamma} & \text{if } \; \chi_\gamma \in s^*s, \\
                0 & \text{otherwise,}
            \end{array} \right.
    \]
    where \(I_\sigma\) is the inverse semigroup of partial bijections of \(\Gamma\) generated by \(\{\gamma\}_{\gamma \in \Gamma}\) and \(\{s\}_{s \in S}\).
    It is clear, then, that the image of \(\pi_\sigma\) in the statement contains the image of \(\Lambda_\sigma\) above, so it suffices to prove that \(\pi_\sigma(fv_s)\), where \(fv_s\) is a continuous function compactly supported on a bisection \(s \subseteq G\), lies in the closed span of the image of \(\Lambda_\sigma\).
    Let \(\varepsilon > 0\) be given. Since the unit space \(\partial \hat{\mathcal{E}} \subseteq G_\sigma\) is totally disconnected, it follows that we may approximate \(f\) (which we see as a function supported on \(ss^* \subseteq \partial \hat{\mathcal{E}}\)) by a sum \(a_1 1_{A_1} + \dots + a_\ell 1_{A_\ell}\), where \((a_i)_{i = 1, \dots, \ell} \subseteq \CCC\) and \((A_i)_{i = 1, \dots, \ell}\) is a collection of compact clopen subsets of \(ss^* \subseteq \partial \hat{\mathcal{E}}\).
    That is, we have
    \[
        \norm{f - a_1 1_{A_1} - \dots - a_\ell 1_{A_\ell}} \leq \varepsilon.
    \]
    It follows that
    \begin{align*}
        \norm{\pi_\sigma\left(f v_s\right) - \Lambda_{\sigma} \left(a_1 1_{A_1 s} + \dots + a_\ell 1_{A_\ell s}\right)} & = \norm{\pi_\sigma\left(f v_s - a_1 1_{A_1 s} - \dots - a_\ell 1_{A_\ell s}\right)} \\
        & \leq \norm{f - a_1 1_{A_1} - \dots - a_\ell 1_{A_\ell}} \leq \varepsilon.
    \end{align*}
    Now, item \cref{bruce-li-algs-key:2} just follows from \cref{prop: a class of ess reps}, along with the fact that \(\Gamma\) does embed (as a set) into \(\partial \hat{\mathcal{E}}\) (cf.\ \cite{bruce-li:2024:alg-actions}*{Lemma 5.1}) and the fact that no unit \(\chi_\gamma \in \Gamma \subseteq \partial \hat{\mathcal{E}}\) is dangerous (cf.\ \cite{bruce-li:2024:alg-actions}*{Proposition 5.4}).
\end{proof}

An immediate consequence of \cref{cor: ess reps are nice,bruce-li-algs-key} is the following enrichment of the diagram in \cref{cor:diagram} for this particular class of \'etale groupoids.
(For the following last two results, note that we assume throughout \cref{sec:bruce-li} that both \(S\) and \(\Gamma\) are countable (unlike what is done in \cite{bruce-li:2024:alg-actions}). This, in particular, implies that \(G_\sigma\) is covered by countably many open bisections.)
\begin{corollary} \label{cor:diagram-bruce-li}
    Let \(\sigma \colon S \curvearrowright \Gamma\) be an exact algebraic action (cf.\ \cite{bruce-li:2024:alg-actions}) and let \(\mathfrak{A}_\sigma\) and \(G_\sigma\) be as above.
    Suppose that \(\chi_e G_\sigma \chi_e = \{\chi_e\}\). Then the identity map \(\id \colon \algalg{G_\sigma} \to \algalg{G_\sigma}\) induces maps making  
    \begin{center}
        \begin{tikzcd}[scale=50em]
            \fullalg{G_\sigma} \arrow[two heads]{d}{\Lambda^{\text{max}}_{E_\mathcal{L},P}} \arrow[two heads]{r}{\Lambda_P} & \redalg{G_\sigma} \arrow[two heads]{d}{\Lambda^{\text{red}}_{E_\mathcal{L},P}} \arrow{r}{P} & \borelb(\partial \hat{\mathcal{E}}) \arrow[two heads]{d}{\pi_\ess} \\
            \essmaxalg{G_\sigma} \arrow[two heads]{dr}{\pi_\sigma} \arrow[two heads]{r}{\Lambda_{E_\mathcal{L}}} & \essalg{G_\sigma} \arrow{r}{E_\L} & \borelb(\partial \hat{\mathcal{E}})/\meager(\partial \hat{\mathcal{E}}) \\
            & \mathfrak{A}_\sigma \arrow[two heads]{u}{} \arrow{r}{E} & \ell^\infty\left(\Gamma\right) \arrow{u}{Q_{\partial \hat{\mathcal{E}}, \Gamma}}
        \end{tikzcd}
    \end{center}
    commute, where
    \begin{enumerate}[label=(\roman*)]
        \item \(E \colon \B(\ell^2(\Gamma)) \to \ell^\infty(\Gamma)\) is the usual conditional expectation; and
        \item \(Q_{\partial \hat{\mathcal{E}}, \Gamma}\) is the map sending \(p_A \in \ell^\infty(\Gamma)\) to \([\chi_{\overline{A}}]\) (cf.\ \cref{cor: ess reps are nice}).
    \end{enumerate}
    Moreover, \(P, E_\L\) and \(E\) are all faithful maps.
\end{corollary}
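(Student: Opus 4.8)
The plan is to obtain the asserted diagram by vertically stacking two diagrams already at our disposal. The upper rectangle, involving $\fullalg{G_\sigma}$, $\redalg{G_\sigma}$, $\essmaxalg{G_\sigma}$, $\essalg{G_\sigma}$ together with $P$, $E_\L$ and $\pi_\ess$, is precisely \cref{cor:diagram} applied to the groupoid $G = G_\sigma$ with unit space $X = \partial\hat{\mathcal{E}}$. This is legitimate because $S$ and $\Gamma$ are assumed countable, so $I_\sigma$ is countable and $G_\sigma$ is covered by countably many open bisections; in particular $P$ and $E_\L$ are faithful there and $\Lambda_{E_\L}$ may be realized as $\bigoplus_{x \in \partial\hat{\mathcal{E}}\setminus D}\lambda_x$. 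The lower part, attaching $\mathfrak{A}_\sigma$ through $\pi_\sigma$, $E$ and $Q_{\partial\hat{\mathcal{E}},\Gamma}$, will be supplied by \cref{cor: ess reps are nice}, whose map $\pi_{\rm max}$ is exactly the $\pi_\sigma$ of the statement.

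The actual work is therefore only to check that \cref{cor: ess reps are nice} applies to $\pi_\sigma$ with the dense set taken to be the image $\Gamma \cong \{\chi_\gamma : \gamma \in \Gamma\} \subseteq \partial\hat{\mathcal{E}}$. By \cref{bruce-li-algs-key}~\cref{bruce-li-algs-key:2}, $\pi_\sigma$ is an essential representation of the type considered in \cref{prop: a class of ess reps} for the invariant set $\{\chi_\gamma\}$, which contains no dangerous unit by \cite{bruce-li:2024:alg-actions}*{Proposition 5.4}; and by \cref{bruce-li-algs-key}~\cref{bruce-li-algs-key:1} its image is dense in $\mathfrak{A}_\sigma$, so the concrete \cstar algebra $\mathfrak{A}_{\pi_\sigma}$ generated by $\pi_\sigma$ is exactly $\mathfrak{A}_\sigma$. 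Density of $\{\chi_\gamma\}$ in $\partial\hat{\mathcal{E}}$ is recorded in the discussion preceding \cref{bruce-li-algs-key} (resting on the injectivity in \cite{bruce-li:2024:alg-actions}*{Lemma 5.1}). The only remaining hypothesis of \cref{cor: ess reps are nice} is the isotropy condition $\chi_\gamma G_\sigma \chi_\gamma = \{\chi_\gamma\}$ for \emph{every} $\gamma \in \Gamma$, which must be derived from the single standing assumption $\chi_e G_\sigma \chi_e = \{\chi_e\}$.

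This is the one genuine point, and I expect it to be the main (indeed only) obstacle; it is resolved by an orbit argument. The set $\{\chi_\gamma\}$ is the $\Gamma$-orbit of $\chi_e$: for each $\gamma \in \Gamma$, viewing $\gamma$ as an element of $I_\sigma$, its germ at $\chi_e$ is an arrow $g_\gamma \in G_\sigma$ with $\s(g_\gamma) = \chi_e$ and $\rg(g_\gamma) = \chi_\gamma$. Conjugation by $g_\gamma$ then yields a group isomorphism $g_\gamma\,(\chi_e G_\sigma \chi_e)\,g_\gamma^{-1} = \chi_\gamma G_\sigma \chi_\gamma$, so triviality of $\chi_e G_\sigma \chi_e$ forces $\chi_\gamma G_\sigma \chi_\gamma = \{\chi_\gamma\}$. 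With every hypothesis verified, \cref{cor: ess reps are nice} produces the quotient map $\pi_\sigma \colon \essmaxalg{G_\sigma} \twoheadrightarrow \mathfrak{A}_\sigma$ factoring $\Lambda_{E_\L}$, the expectation $E$, and the comparison map $Q_{\partial\hat{\mathcal{E}},\Gamma}$ making the lower square commute with $E_\L$.

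Finally, gluing the two diagrams along the common edge $\essmaxalg{G_\sigma} \xrightarrow{\Lambda_{E_\L}} \essalg{G_\sigma} \xrightarrow{E_\L} \borelb(\partial\hat{\mathcal{E}})/\meager(\partial\hat{\mathcal{E}})$ gives the full commuting diagram of the statement. For the faithfulness claims, $P$ and $E_\L$ are faithful by \cref{cor:diagram}, while $E \colon \B(\ell^2(\Gamma)) \to \ell^\infty(\Gamma)$ is the standard diagonal conditional expectation (restricted to $\mathfrak{A}_\sigma$), which is well known to be faithful. Beyond the orbit argument, everything is a bookkeeping assembly of maps already constructed, so I anticipate no further difficulty.
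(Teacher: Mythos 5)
Your proposal is correct and follows essentially the same route as the paper, whose proof is simply the assembly of \cref{cor:diagram}, \cref{bruce-li-algs-key} and \cref{cor: ess reps are nice} that you describe. Your conjugation/orbit argument correctly supplies the one detail the paper leaves implicit, namely that triviality of the isotropy at the single unit \(\chi_e\) already forces \(\chi_\gamma G_\sigma \chi_\gamma = \{\chi_\gamma\}\) for every \(\gamma \in \Gamma\), as required to invoke \cref{cor: ess reps are nice}.
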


We record the following last result, which relates the (essential) amenability of the groupoid \(G_\sigma\) to the nuclearity of the \cstar algebra \(\mathfrak{A}_\sigma\) (again, see \cite{bruce-li:2024:alg-actions}*{Definitions 3.1 and 3.29} for details).
\begin{corollary} \label{cor:bruce-li-actions}
    Let \(\sigma \colon S \curvearrowright \Gamma\) be an exact algebraic action (cf.\ \cite{bruce-li:2024:alg-actions}) and let \(\mathfrak{A}_\sigma\) and \(G_\sigma\) be as above. Suppose that \(\chi_e G_\sigma \chi_e = \{\chi_e\}\), and consider the assertions.
    \begin{enumerate}[label=(\roman*)]
        \item \label{cor:bruce-li-actions:a nuclear} \(\mathfrak{A}_\sigma\) is nuclear.
        \item \label{cor:bruce-li-actions:ess nuclear} \(\essalg{G_\sigma}\) is nuclear.
        \item \label{cor:bruce-li-actions:ess ame} \(G_\sigma\) is essentially amenable.
    \end{enumerate}
    Then \cref{cor:bruce-li-actions:a nuclear} \(\Rightarrow\) \cref{cor:bruce-li-actions:ess nuclear} \(\Rightarrow\) \cref{cor:bruce-li-actions:ess ame}.
\end{corollary}
    

\bibliographystyle{amsalpha}
\bibliography{ess-ame.bib}

\end{document}